\documentclass[11pt,letterpaper]{amsart}

\usepackage{amsfonts,amsmath,amsthm,amssymb}
\usepackage{shortvrb,graphicx,psfrag}
\usepackage{wrapfig}
\usepackage[colorlinks,pdfpagelabels,pdfstartview = FitH,bookmarksopen= true,bookmarksnumbered = true,linkcolor =  ,plainpages =false,hypertexnames = false,citecolor =  ,pagebackref=false]{hyperref}
\usepackage{relsize}
\usepackage{orcidlink}
\usepackage{xcolor}
\usepackage{appendix}
\usepackage{comment}
\usepackage{ulem}
\usepackage[margin=1.4in]{geometry}
\usepackage{stmaryrd}
\usepackage{float}
\usepackage{mathrsfs}
\SetSymbolFont{stmry}{bold}{U}{stmry}{m}{n}

\allowdisplaybreaks
\newcommand{\N}{\mathbb N}
\renewcommand{\d}{\mathrm{d}}
\newcommand{\dx}{\mathrm{d}x}
\newcommand{\dy}{\mathrm{d}y}

\newcommand{\dt}{\mathrm{d}t}

\renewcommand{\rho}{\varrho}

\newcommand{\noi}{\noindent}
\newcommand{\dsty}{\displaystyle}

\newcommand{\pl}{\partial}

\newcommand{\al}{\alpha}

\newcommand{\be}{\beta}

\newcommand{\gm}{\gamma}
\newcommand{\dl}{\delta}

\newcommand{\lm}{\lambda}

\newcommand{\kp}{\kappa}
\newcommand{\varep}{\varepsilon}

\newcommand{\vp}{\varphi}
\newcommand{\sig}{\sigma}

\newcommand{\om}{\omega}

\newcommand{\z}{\zeta}

\newcommand{\loc}{\operatorname{loc}}

\newcommand{\dist}{\operatorname{dist}}
\newcommand{\dvg}{\operatorname{div}}
\newcommand{\essup}{\operatornamewithlimits{ess\,sup}}
\newcommand{\essinf}{\operatornamewithlimits{ess\,inf}}

\newcommand{\rr}{\mathbb{R}}
\newcommand{\rn}{\rr^N}
\newcommand{\nn}{\mathbb{N}}

\def\Xint#1{\mathchoice
    {\XXint\displaystyle\textstyle{#1}}
    {\XXint\textstyle\scriptstyle{#1}}
    {\XXint\scriptstyle\scriptscriptstyle{#1}}
    {\XXint\scriptscriptstyle\scriptscriptstyle{#1}}
    \!\int}
\def\XXint#1#2#3{\setbox0=\hbox{$#1{#2#3}{\int}$}
    \vcenter{\hbox{$#2#3$}}\kern-0.5\wd0}
\def\bint{\Xint-}
\def\dashint{\Xint{\raise4pt\hbox to7pt{\hrulefill}}}
\def\dashiint{\bint\kern-0.15cm\bint}

 \def\Xiint#1{\mathchoice
     {\XXiint\displaystyle\textstyle{#1}}%
     {\XXiint\textstyle\scriptstyle{#1}}%
     {\XXiint\scriptstyle\scriptscriptstyle{#1}}%
     {\XXiint\scriptscriptstyle\scriptscriptstyle{#1}}%
     \!\iint}
 \def\XXiint#1#2#3{\setbox0=\hbox{$#1{#2#3}{\iint}$}
     \vcenter{\hbox{$#2#3$}}\kern-0.5\wd0}
 \def\biint{\Xiint{-\!-}}

\def\XXiiint#1#2#3{\setbox0=\hbox{$#1{#2#3}{\iint}$}
    \vcenter{\hbox{$#2#3$}}\kern-0.5\wd0}

\newtheorem{proposition}{Proposition}[section]
\newtheorem{theorem}{Theorem}[section]

\newtheorem{lemma}{Lemma}[section]
\newtheorem{corollary}{Corollary}[section]
\newtheorem{remark}{Remark}[section]
\newtheorem{definition}{Definition}[section]

\numberwithin{equation}{section}
\numberwithin{theorem}{section}
\numberwithin{proposition}{section}
\numberwithin{lemma}{section}
\numberwithin{remark}{section}
\title[Nonlocal intrinsic Harnack estimates]{Nonlocal intrinsic Harnack estimates}

\author[N. Liao]{Naian Liao}
\address{Fachbereich Mathematik, Universit\"at Salzburg,
Hellbrunner Str. 34, 5020 Salzburg, Austria}{}
\email{naian.liao@plus.ac.at}

\begin{document}

\subjclass[2020]{47G20, 35R11, 35K65, 35B65}

\keywords{Harnack's inequality,   nonlinear parabolic, nonlocal, intrinsic scaling}

\begin{abstract}
 We extend DiBenedetto's intrinsic parabolic Harnack estimates to a class of integro-differential equations with measurable kernels  modeled on the fractional $p$-Laplacian. More remarkably, we establish intrinsic, representation type nonlocal estimates and reveal that nonnegative local solutions abide by a strong maximum principle even for $p>2$, a property unique to nonlocal problems. Analogous results hold true for nonlocal equations of porous medium type.
\end{abstract}  

\date{\today}

\maketitle

%%%%%%%%%%%%%%
\section{Introduction}%\label{S:intro}
In 1988 DiBenedetto \cite{DB-88} established an intrinsic Harnack type inequality for the evolution of degenerate $p$-Laplacian:
\begin{equation}\label{Eq:p-Laplace}
  \pl_t u -\dvg(|\nabla u|^{p-2}\nabla u)=0,\quad p>2 . 
\end{equation}
Precisely, if $u$ is a nonnegative, continuous solution satisfying $u(x_o,t_o)>0$ and if $B_{4\rho}(x_o)\times (t_o- [\sig u(x_o,t_o)]^{2-p}(4\rho)^p, t_o+ [\sig u(x_o,t_o)]^{2-p}(4\rho)^p]$ is included in the domain under consideration, then 
\[
u(x_o,t_o)\leqslant \boldsymbol{\gm} \inf_{B_\rho(x_o)} u(\cdot, t_o+[\sig u(x_o,t_o)]^{2-p}\rho^p)
\]
holds true, where $\boldsymbol{\gm}$ and $\sig$ depend only on $p$ and the space dimension.
The thrust is that equation~\eqref{Eq:p-Laplace} is inhomogeneous in $u$, and the {\it intrinsic time scaling} restores the homogeneity. In other words, nonnegative solutions to \eqref{Eq:p-Laplace} behave like caloric functions in their own intrinsic geometry.
His original approach was based on the maximum principle and comparison functions constructed as variants of the Barenblatt self-similar solution. In a sense, such an approach could be regarded as paralleling that of Hadamard and Pini for caloric functions.
However, a fundamental gap remained between the elliptic theory and the parabolic one. Indeed, elliptic Harnack estimates were known to be  purely structural facts, independent of any comparison argument. This gap was eventually filled by DiBenedetto, Gianazza and Vespri \cite{DBGV-acta} twenty years later. See  the monographs~\cite{DB,DBGV-mono, Urbano-08} for a whole account.

\subsection{The evolution of $(s,p)$-Laplacian}
Over the past few years, we have witnessed a growing interest in the fractional $p$-Laplacian, sometimes also called $(s,p)$-Laplacian. The study is partly motivated by a variational consideration, like the one for the $p$-Laplacian.  Markedly, a basic DeGiorgi-Nash-Moser theory has been established for it, and an evolutionary nonlocal theory is emerging; see~\cite{Adi, Byun-ampa, Caff-Vass-11, CCI-26, ChenJY, Cozzi, DKP-1, DKP-2, Kass-13, Kass-09, Kass-20, Kass-23, Liao-cvpd-24, Liao-mod-24,Liao-DD-24} for a fraction of the vast literature. Dealing with a measurable integral kernel is the common feature of these papers.

In this work, we continue the study set out in \cite{Liao-cvpd-24} regarding
a class of evolution equations that feature a nonlocal operator of $(s,p)$-Laplacian type:
\begin{equation}\label{Eq:1:1}
\pl_t u + \mathscr{L} u=0\quad\text{weakly in}\>\> E_T:=E\times(0,T],
\end{equation}
%where $E_T=E\times(0,T]$ 
for some open set $E\subset\rn$ and some $T>0$. 
The nonlocal operator $\mathscr{L}$ is defined by
\begin{equation}\label{Eq:1:2}
\mathscr{L}u(x,t)={\rm p.v.}\int_{\rn} \mathsf{K}(x,y,t) |u(x,t) - u(y,t) |^{p-2}  (u(x,t) - u(y,t) )\,\dy,
\end{equation}
for some $p>2$,
whereas %${\rm P.V.}$ denotes the principle value of the integral, whereas 
 the kernel $\mathsf{K}:\rn\times\rn\times(0,T]\to [0,\infty)$ is measurable and satisfies the following condition uniformly in $t$:
\begin{equation}\label{Eq:K}
\frac{C_o}{|x-y|^{N+sp}}\leqslant \mathsf{K}(x,y,t)\equiv \mathsf{K}(y,x,t)\leqslant \frac{C_1}{|x-y|^{N+sp}}\quad\text{a.e.}\>\> x,\,y\in\rn,
\end{equation}
for some positive $C_o$, $C_1$ and $s\in(0,1)$. 
In \cite{Liao-mod-24} we have proved that local weak solutions are locally continuous (not H\"older continuous in general). Based on that, we open up the nonlocal intrinsic Harnack type estimates.
\begin{theorem}[Harnack type estimates]\label{Thm:1}
   Let  $u$ be a continuous, local weak solution to the $(s,p)$-parabolic type equations~\eqref{Eq:1:1}--\eqref{Eq:K} with $p>2$, in the sense of Definition~\ref{Def:sol}.
Assume that $u\geqslant0$ in $E_T$ and  $u_o:=u(x_o,t_o)>0$ for some $(x_o,t_o)\in E_T$. There exist constants $\boldsymbol{\gm}_{\mathsf{H}}>1$ and $\kappa,\,\sig\in(0,1)$ depending on $s$, $p$, $N$, $C_o$, and $C_1$, such that if  
\begin{equation}\label{Thm:condition:1}
    B_{100\rho}(x_o)\times (t_o-2\theta_{\sig}(8\rho)^{sp}, t_o+ 2\theta_{\sig} (8\rho)^{sp} ]\subset E_T,
\end{equation} 
 where $\theta_{\sig}:=(\sig u_o)^{2-p}$ and if
\begin{equation}\label{Thm:condition:2}
    \boldsymbol{\gm}_{\mathsf{H}}\int_{t_o-2\theta_{\sig}(8\rho)^{sp}}^{t_o+2\theta_{\sig}(8\rho)^{sp}}\int_{\rn\setminus B_{\rho}(x_o)}\frac{u_-^{p-1}(x,t)}{|x-x_o|^{N+sp}}\,\dx\dt\leqslant\sig u_o,
\end{equation}
then we have the following two conclusions.

\begin{itemize}
    \item[(i)] The local, intrinsic Harnack estimate holds true:
    \begin{equation}\label{Thm:conclusion:1}
        \sig\sup_{B_\rho(x_o)}u (\cdot, t_o-\kappa\theta_{1}\rho^{sp} ) \leqslant u_o \leqslant \sig^{-1}\inf_{B_\rho(x_o)}u (\cdot, t_o+\kappa\theta_{\sig}\rho^{sp} ).
    \end{equation}
    \item[(ii)] The global,  intrinsic representation type estimate holds true:
    \begin{align}\label{Thm:conclusion:2}\nonumber
    \frac{\kappa\sig}{\boldsymbol{\gm}_{\mathsf{H}}}\,\int^{t_o-\kappa\theta_1\rho^{sp}}_{t_o-\kappa\theta_1(2\rho)^{sp}}\int_{\rn}&\frac{u_+^{p-1}(x,t)}{ (\rho+|x-x_o|)^{N+sp}}\,\dx\dt  \\
    &\leqslant u_o \leqslant  \boldsymbol{\gm}_{\mathsf{H}}\,\int^{t_o}_{t_o-2^{p-1}\theta_1\rho^{sp}}\int_{\rn}\frac{u_+^{p-1}(x,t)}{ (\rho+|x-x_o|)^{N+sp}}\,\dx\dt.
\end{align}
\end{itemize}
\end{theorem}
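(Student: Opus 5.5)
We prove the four inequalities in turn, using the tools developed in \cite{Liao-cvpd-24, Liao-mod-24}. These are the energy estimates with nonlocal tails, the De Giorgi type lemmas, the expansion of positivity for nonlocal equations of $(s,p)$-type with $p>2$, and the local boundedness estimate featuring a tail term. We follow the structure of DiBenedetto--Gianazza--Vespri \cite{DBGV-acta}, with the tail handled through \eqref{Thm:condition:2}.

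\textbf{Right-hand inequality of \eqref{Thm:conclusion:1}.} By translation assume $(x_o,t_o)=(0,0)$.
\begin{itemize}
\item[(a)] Use continuity and a Krylov--Safonov covering argument to locate a point $(\bar x,\bar t)$ near the origin and a radius $r\leqslant\rho$ with two properties. First, $u\geqslant \tfrac12 M$ on a small ball $B_r(\bar x)$ at time $\bar t$, where $M\geqslant u_o$. Second, $u\leqslant \gm M$ on an intrinsic cylinder around $(\bar x,\bar t)$. This is done by studying the function $(\rho-|x|)^{\beta}\sup u(\cdot,t)$ on intrinsically scaled cylinders, and using the local boundedness estimate (sup bounded by an $L^1$-type average plus the tail, with the negative tail controlled by \eqref{Thm:condition:2}) to exclude blow-up.
\item[(b)] Once a measure-theoretic positivity set is found, apply the expansion of positivity with time scale $(\sig u_o)^{2-p}$. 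This spreads positivity to $B_\rho$ at time $\kappa\theta_\sig\rho^{sp}$, with a lower bound $\sig u_o$.
\end{itemize}
The nonlocal tail of $u_-$ enters the expansion only as an error, which \eqref{Thm:condition:2} makes smaller than $\sig u_o$.

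\textbf{Left-hand inequality of \eqref{Thm:conclusion:1}.} This is the backward estimate. Argue by contradiction: if $\sup_{B_\rho}u(\cdot,-\kappa\theta_1\rho^{sp})>u_o/\sig$, apply the forward estimate just proved at a point where $u$ is large. With time scale $\theta_1$, a large value propagates forward and forces $u(0,0)>u_o$ once $\kappa$ is small. The time lag $\kappa(\sig M)^{2-p}\rho^{sp}\leqslant \kappa\theta_1\rho^{sp}$ fits because $M\geqslant u_o/\sig$ and $p>2$.

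\textbf{Representation estimates \eqref{Thm:conclusion:2}.}
\begin{itemize}
\item[(a)] \emph{Upper bound on $u_o$.} Test the weak formulation with a cutoff $\varphi$ supported in $B_{\rho}$ and integrate over $(-2^{p-1}\theta_1\rho^{sp},0)$. On $\rn\setminus B_{2\rho}$ the term $\mathscr{L}u$ tested against $\varphi$ reduces to $-\int\!\!\int u_+^{p-1}(y)\varphi(x)|x-y|^{-N-sp}$, up to local terms and $u_-$ tails. The local terms are bounded by $\gm u_o^{p-1}\rho^{N-sp}$ times the time length, using the sup bound from (i) at earlier times (left-hand inequality). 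Since $\theta_1 u_o^{p-1}\rho^{N-sp}\cdot\rho^{sp}\sim u_o\rho^N$, this matches the $\int\varphi u$ differences. The point is that the lower bound $u\geqslant\sig u_o$ at a later time, compared with the upper bound at an earlier time, forces the flux integral $\int\!\!\int u_+^{p-1}/(\rho+|x|)^{N+sp}$ to be at least $u_o/\gm$. Equivalently, the flux is at least comparable to the time derivative, which is of order $u_o$ per unit intrinsic time.
\item[(b)] \emph{Lower bound on $u_o$.} Use the same identity in reverse: the far-away mass pushes $u$ up. Test with $\varphi$ over the interval $(-\kappa\theta_1(2\rho)^{sp},-\kappa\theta_1\rho^{sp})$. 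The nonlocal term from $u_+$ outside gives $\int\varphi u$ an increase proportional to the tail integral. The local contributions are controlled by the sup bound $u\leqslant u_o/\sig$ from (i). Hence the tail integral is at most $\gm u_o$.
\end{itemize}

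\textbf{Main obstacle.} The hardest step is (a) of the forward estimate: building the intrinsic covering argument so that the nonlocal tail of $u_+$ (which is not assumed small) never appears. We would handle this by working only with truncations $(u-k)_-$ in the expansion of positivity, where $u_+$ far away contributes with a favorable sign. The local sup bound is needed only inside $B_{100\rho}$, with the positive tail there absorbed via the same intrinsic scaling.
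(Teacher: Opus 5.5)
\textbf{Gap 1: forward estimate, step (a).} This is the central gap. The Krylov--Safonov selection gives you a point $(y,\tau)$ with $v(y,\tau)=(1-r_*)^{-\beta}$. It gives $v\le 2^\beta(1-r_*)^{-\beta}$ only on a cylinder of radius $R\sim 1-r_*$ around that point.

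To turn this into positivity you must use the critical density lemma (Lemma~\ref{Lm:DG:1}) on the \emph{sub-solution} side, for $(v-\boldsymbol{\mu}^+)_+$. That lemma requires $\mathrm{Tail}[(v-\boldsymbol{\mu}^+)_+]$, taken relative to this tiny cylinder, to be $\lesssim (1-r_*)^{-\beta}$. Nothing in the hypotheses controls it:
\begin{itemize}
\item the selection bound $(1-r)^{-\beta}$ degenerates on the rest of the reference cylinder;
\item outside it there is no bound at all;
\item the local boundedness estimate you invoke carries $\mathrm{Tail}[u_+]$, not a $u_-$ tail, on its right-hand side, so it cannot ``exclude blow-up''.
\end{itemize}
Moving to $(u-k)_-$ truncations in the expansion of positivity does not address this. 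The expansion only involves $u_-$ tails anyway; the difficulty lies in producing the positivity set. Also, a pointwise bound $u\ge\frac12M$ on a ball of controlled radius would need a quantitative modulus of continuity, which is not available. Only a density statement is.

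The missing idea is that the positive tail is not avoided but exploited, through a dichotomy.
\begin{itemize}
\item If $\widetilde{\boldsymbol{\gamma}}\,\mathrm{Tail}[v_+;(y,\tau)+Q_{2R}((\xi\boldsymbol{\omega})^{2-p})]\le\xi\boldsymbol{\omega}$, Lemma~\ref{Lm:DG:1} gives $|\{v\ge\frac12(1-r_*)^{-\beta}\}\cap (y,\tau)+Q_R|>\widetilde\nu|Q_R|$. Otherwise one would get $v(y,\tau)\le\frac34(1-r_*)^{-\beta}$.
\item If instead this tail is large, use the \emph{super-solution} energy estimate with $w_-=(v-k)_-$. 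Its left side contains $\iint w_-(x,t)\int_{\mathbb{R}^N} w_+^{p-1}(z,t)|x-z|^{-N-sp}\,\mathrm{d}z\,\mathrm{d}x\,\mathrm{d}t$, which converts a large tail into local positivity. Lemma~\ref{Lm:meas-shrink:3} with $k=\sigma\xi\boldsymbol{\omega}$ yields a time $\bar\tau$ with $|\{v(\cdot,\bar\tau)\le\sigma\xi\boldsymbol{\omega}\}\cap B_{2R}(y)|\le\frac12|B_{2R}|$.
\end{itemize}
Only then do you expand from $B_{2R}(y)$. Take $\beta$ from the factor $\eta\varepsilon^\beta k$ in Corollary~\ref{Cor:expansion}, so that $(1-r_*)^{-\beta}(2R)^\beta$ cancels and the level reached is independent of $r_*$.

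\textbf{Gap 2: right-hand inequality of \eqref{Thm:conclusion:2}.} Your argument here fails for several reasons.
\begin{itemize}
\item The only lower bound from (i) sits at $t_o+\kappa\theta_\sigma\rho^{sp}$, outside the window $(t_o-2^{p-1}\theta_1\rho^{sp},t_o)$. An identity reaching that time brings in flux and local terms on $(t_o,t_o+\kappa\theta_\sigma\rho^{sp})$. The target does not contain these, and you have no upper bound there.
\item The earlier upper bound $u_o/\sigma$ exceeds the later lower bound $\sigma u_o$, so the change of $\int\varphi u$ forces nothing.
\item (i) gives no sup bound on most of the window, whose length $2^{p-1}\theta_1\rho^{sp}$ far exceeds $\kappa\theta_1\rho^{sp}$.
\item With a bare cutoff, the local term $\iint_{B_{2\rho}\times B_{2\rho}}|u(x)-u(y)|^{p-1}|\varphi(x)-\varphi(y)|\,|x-y|^{-N-sp}$ cannot be bounded by $\sup u$ alone when $sp\ge1$, since $|x-y|^{1-N-sp}$ is not locally integrable.
\end{itemize}
What is needed is an $L^\infty$ estimate for sub-solutions, independent of (i). It comes from De Giorgi iteration with a time-dependent level $\ell(t)$ whose derivative absorbs the far part of the $u_+$ tail. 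This is followed by Young's inequality and an interpolation over shrinking cylinders, giving
$\sup_{\frac12Q(\rho,\theta)}u\le(2\rho^{sp}/\theta)^{1/(p-2)}+\mathrm{Tail}[u_+;Q(\frac12\rho,\theta)]+\gamma\,(\text{mean of } u_+^{p-1} \text{ on } Q(\rho,\theta))^{1/(p-1)}$.
Taking $\theta=2^{p-1}\theta_1\rho^{sp}$ makes the first term $\le\frac12u_o$; this is where the $2^{p-1}$ in the statement comes from.

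\textbf{Two smaller points.}
\begin{itemize}
\item For the left-hand inequality of \eqref{Thm:conclusion:2}, your push-up idea is right. For the singularity reason above, though, you must test with $(u-k)_-\zeta^p$, $k\ge\sup u$, i.e.\ use Lemmas~\ref{Lm:meas-shrink:2}--\ref{Lm:meas-shrink:3}, not a bare cutoff.
\item For the backward estimate, an inequality between time lags is not enough. The forward estimate gives a lower bound only at the single time $\tau+\kappa(\sigma M)^{2-p}\rho^{sp}$, which for $M\gg u_o/\sigma$ lies strictly before $t_o$, so there is no contradiction. You need a continuity argument choosing $\rho_*$ and $y$ with $v(y,-(1+\varepsilon)^{2-p}\kappa\rho_*^{sp})=(1+\varepsilon)/\sigma$ exactly. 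Then the lag ends precisely at $t=0$, and you let $\varepsilon\to0$.
\end{itemize}
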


\begin{remark}\upshape
    The parameter $\sig$ once determined by $s$, $p$, $N$, $C_o$, and $C_1$, can be made smaller in these estimates if needed.
\end{remark}

\begin{remark}\upshape
No initial or boundary data will be  prescribed.
    The Harnack type estimates are local and structural facts that do not rely on solving any Cauchy or Dirichlet problem, nor on any comparison argument, nor on time monotonicity estimates. 
    Equations \eqref{Eq:1:1}--\eqref{Eq:K} are used only once to derive  {\it local energy estimates}. Then, all results are consequences solely of the energy estimates.
    On the other hand, Theorem~\ref{Thm:1} could be used to study the local behavior of solutions near the boundary.
\end{remark}

\begin{remark}\label{Rmk:continuity}\upshape
    Continuity of solutions proven in \cite{Liao-mod-24} gives pointwise meaning to $u(x_o,t_o)$ that makes our formulation of Harnack type estimates legitimate. It also provides crucial topological information to carry out the proof. However, we do not need a quantitative modulus of continuity here. In turn,  the local, intrinsic Harnack estimate~\eqref{Thm:conclusion:1} can be used to simplify the argument in \cite[\S~5]{Liao-mod-24}; the gist is that qualitative continuity of solutions plus \eqref{Thm:conclusion:1} yields a quantitative modulus of continuity.
\end{remark}

\begin{remark}\upshape
Condition~\eqref{Thm:condition:1} defines the nature of the results: they are local estimates inside an intrinsic cylinder, whereas condition~\eqref{Thm:condition:2} quantifies the influence of $u_-$ outside the domain. This condition becomes vacuous if $u$ is globally nonnegative but in general unavoidable as already observed in the simplest, elliptic setting by Kassmann \cite{Kass-11}.
Moreover, in the linear parabolic case $p=2$, estimate~\eqref{Thm:conclusion:1} parallels  \cite[Eq.~(1.1)]{Kass-23}, whereas the left/right-hand side of  \eqref{Thm:conclusion:2} formally recovers Eqs.~(1.2)/(1.10) of that paper. The main difference lies in the pointwise formulation we adopt, in order to accommodate the intrinsic geometry inherent in the nonlinear structure.
    However,  we will not study stability as $p\to2$ or as $s\to1$; rather, we emphasize the peculiarity that arises in the nonlinear, nonlocal setting.
\end{remark}

The local, intrinsic Harnack estimate \eqref{Thm:conclusion:1} can be regarded as a nonlocal analog of  Theorem~1.1 in \cite[Chapter~5]{DBGV-mono}, whereas the representation type estimate \eqref{Thm:conclusion:2} is a purely nonlocal fact. Both of them shed light on how nonnegative solutions behave in the vicinity of their vanishing set. As a matter of fact, estimate \eqref{Thm:conclusion:1} yields a {\it sub-potential estimate} of nonnegative global solutions via a chain argument; see \cite{DGV-07} and Proposition~6.2 of \cite[Chapter~5]{DBGV-mono}. Essentially, such an estimate shows that any nonnegative solution is bounded from below by a variant of the Barenblatt solution that features a moving boundary--a characteristic phenomenon of slow/degenerate diffusion equations like \eqref{Eq:p-Laplace}. Given that one might expect a similar profile for the evolution of $(s,p)$-Laplacian as well.

Strikingly, we will demonstrate through  estimate \eqref{Thm:conclusion:2} that nonnegative local solutions propagate positivity instantly like caloric functions, and hence, a moving boundary does not exist in general. In this sense, estimate \eqref{Thm:conclusion:2} can also be deemed as a nonlocal mean value estimate.
\begin{theorem}[Strong maximum principle]\label{Thm:2}
       Let  $u$ be a continuous, local weak solution to the $(s,p)$-parabolic type equations~\eqref{Eq:1:1}--\eqref{Eq:K} with $p>2$ in the sense of Definition~\ref{Def:sol}.
Assume that $u\geqslant0$ a.e. in $\rn\times(0,T)$ and $(x_o,t_o)\in E\times(0, T)$. If $u(x_o,t_o)>0$, then $u>0$ in $E\times[t_o,T)$. Alternatively, if $u(x_o,t_o)=0$, then $u=0$ a.e. in $\rn\times(0,t_o]$. 
\end{theorem}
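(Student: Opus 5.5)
Both claims will be derived from Theorem~\ref{Thm:1}, with $u_-\equiv0$ on $\rn\times(0,T)$. Then condition~\eqref{Thm:condition:2} is vacuous, and only the inclusion~\eqref{Thm:condition:1} has to be arranged. This can always be done by shrinking $\rho$, because $\theta_\sig$ is fixed once $u(x_o,t_o)>0$ is given.

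\emph{Second alternative.} Suppose $u(x_o,t_o)=0$ and, arguing by contradiction, that $u$ is not a.e.\ zero in $\rn\times(0,t_o]$. The right-hand inequality in~\eqref{Thm:conclusion:2} is the relevant one: it bounds $u$ at a point from below by an integral of $u_+^{p-1}$ over all of $\rn$ against the weight $(\rho+|x-x_o|)^{-N-sp}$, which is positive everywhere. It therefore detects positivity of $u$ anywhere in space, but only over a past time interval of length $2^{p-1}\theta_1\rho^{sp}$. Since $u(x_o,t_o)=0$ we cannot apply it at $(x_o,t_o)$ itself, so we argue by continuity.
\begin{itemize}
\item[(a)] Consider $\tau:=\sup\{t\le t_o:\ u(\cdot,t')=0 \text{ a.e.\ in } \rn \text{ for a.e.\ } t'\in(t,t_o]\}$, and suppose positivity occurs at times arbitrarily close below $t_o$.
\item[(b)] Apply~\eqref{Thm:conclusion:2} at points $(x_o,t)$ with $t\uparrow t_o$ and $u(x_o,t)>0$. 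For such $t$, take $\rho$ small but fixed and note that $\theta_1=u(x_o,t)^{2-p}\to\infty$ as $u(x_o,t)\to0$. Hence the backward window $[t-2^{p-1}\theta_1\rho^{sp},t]$ lengthens; the admissible $\rho$ shrinks in order to keep~\eqref{Thm:condition:1}, which is the delicate point.
\item[(c)] The cleaner route is to use the left inequality of~\eqref{Thm:conclusion:2}. At any point $(x_o,t)$ with $u(x_o,t)=a>0$, it gives
\[
\int^{t-\kappa\theta_1\rho^{sp}}_{t-\kappa\theta_1(2\rho)^{sp}}\int_{\rn}\frac{u_+^{p-1}}{(\rho+|x-x_o|)^{N+sp}}\,\dx\dt\le \frac{\boldsymbol{\gm}_{\mathsf{H}}}{\kappa\sig}\,a .
\]
If $u$ has positive mass $M$ on some set $A\times I$ with $I\subset(0,t_o)$, choose $\rho$ and $a$ so that this time window covers $I$; since $\theta_1=a^{2-p}$, the window location is tunable via $a$. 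The left side is then at least $cM$, whereas the right side is $\lesssim a\to0$.
\item[(d)] By continuity of $t\mapsto u(x_o,t)$ and the intermediate value theorem, if $u(x_o,\cdot)$ is positive somewhere before $t_o$, then every small value $a$ is attained at times near $t_o$. This yields the contradiction, provided the window $[t-\kappa\theta_1(2\rho)^{sp},\,t-\kappa\theta_1\rho^{sp}]$ can be made to hit $I$ while~\eqref{Thm:condition:1} still holds. That requires $2\theta_\sig(8\rho)^{sp}<t$, which is compatible because $\sig<1$ makes $\theta_\sig$ and $\theta_1$ comparable up to the fixed factor $\sig^{2-p}$; this must be checked carefully, adjusting the constants.
\end{itemize}
If $u(x_o,\cdot)\equiv0$ near $t_o$, we instead argue at the last time where it was positive, or directly by an energy argument. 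Matching windows with the constraint $\theta_\sig(8\rho)^{sp}<t$ is the main obstacle.

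\emph{First alternative.} Suppose $u(x_o,t_o)>0$ but $u(x_1,t_1)=0$ for some $x_1\in E$, $t_1\in[t_o,T)$. By the second alternative applied at $(x_1,t_1)$, $u=0$ a.e.\ in $\rn\times(0,t_1]$. This contradicts $u(x_o,t_o)>0$, since continuity gives positivity on a neighborhood of $(x_o,t_o)$ of positive measure inside $\rn\times(0,t_1]$.

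The substantive work is therefore the second alternative, specifically the tuning of the intrinsic windows in step (d).
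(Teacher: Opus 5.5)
Your derivation of the first alternative from the second is correct. The argument for the second alternative, however, has a genuine gap.

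\textbf{The gap: you never leave the line through $x_o$.} You only apply Theorem~\ref{Thm:1} at points $(x_o,t)$, and that theorem can only be applied where $u>0$. The contradiction hypothesis places positivity somewhere in $\rn\times(0,t_o]$. This is compatible with $u(x_o,\cdot)\equiv0$ on $(0,t_o]$ while $u>0$ elsewhere in $E$, in another component of $E$, or only in $\rn\setminus E$. In that case there is no positive point on the line and no ``last time where it was positive.'' An energy argument also gives nothing yet, because the weak formulation only yields information once $u$ is known to vanish on an open set. Two ingredients are missing.
\begin{itemize}
\item[(i)] \emph{Propagation in space at a fixed time.} The set $\{u(\cdot,t_o)>0\}\cap E$ is open. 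It is also relatively closed. Suppose $x_n\to x_o$ with $u(x_n,t_o)>0=u(x_o,t_o)$. Apply the left inequality of \eqref{Thm:conclusion:2} at $(x_n,t_o)$ with radius $\lambda\rho_n$, where $\kappa[u(x_n,t_o)]^{2-p}\rho_n^{sp}=\mathsf b$ and $\mathsf b$ is fixed small enough that \eqref{Thm:condition:1} holds for all $n$. Then the window $[t_o-(2\lambda)^{sp}\mathsf b,\,t_o-\lambda^{sp}\mathsf b]$ does not depend on $n$. Letting $n\to\infty$ gives $u=0$ a.e.\ in $\rn\times(t_o-2^{sp}\mathsf b,t_o)$, which contradicts $u(x_n,t_o)>0$ by continuity.
\item[(ii)] \emph{Passage from $E$ to $\rn$.} Suppose $u\equiv0$ on $E_{x_o}\times(0,t_o]$, where $E_{x_o}$ is the component of $E$ containing $x_o$. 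Test \eqref{Eq:1:4p} with $\vp$ supported in $E_{x_o}$. Only the term $\iiint \mathsf K(x,y,t)\,u(y,t)^{p-1}\vp(x,t)\,\dy\dx\dt=0$ survives, with $x\in E_{x_o}$ and $y\notin E_{x_o}$. By \eqref{Eq:K} this forces $u=0$ a.e.\ outside $E_{x_o}$.
\end{itemize}

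\textbf{The covering mechanism in (c)--(d) also fails.} Condition \eqref{Thm:condition:1} at $(x_o,t)$ forces $2\theta_\sig(8\rho)^{sp}\leqslant\min\{t,T-t\}$. Since $\theta_\sig=\sig^{2-p}\theta_1\geqslant\theta_1$, the backward window reaches at most
$\kappa\theta_1(2\rho)^{sp}\leqslant \kappa\sig^{p-2}\min\{t,T-t\}/(2\cdot 4^{sp})$
into the past, whatever $a$ is. So the window cannot be placed over an arbitrary earlier set $I$, and the comparability of $\theta_\sig$ and $\theta_1$ works against you rather than for you.

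The left inequality should instead be used through the same limiting device as in (i). Let $t':=\sup\{t<t_o:\,u(x_o,t)>0\}$. Then $u(x_o,t')=0$, and there are $t_n\uparrow t'$ with $a_n:=u(x_o,t_n)>0$ and $a_n\to0$. Choose $\kappa a_n^{2-p}\rho_n^{sp}=\mathsf b$ with $\mathsf b$ small relative to $\min\{t',T-t'\}$. This yields $u=0$ a.e.\ in $\rn\times(t'-2^{sp}\mathsf b,t')$, which contradicts $u(x_o,t_n)>0$.

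Done this way, your temporal idea is a valid and simpler substitute for the paper's Step~1. That step instead iterates the forward Harnack estimate with radii tied to $\min\{t_i,T-t_i\}$ to show $u(x_o,\cdot)>0$ up to $T$. However, the temporal argument only gives $u(x_o,\cdot)\equiv0$ on $(0,t_o]$. The full chain is:
\begin{itemize}
\item[1.] Combine the temporal statement (applied at every point of $E_{x_o}$) with (i) to get $u\equiv0$ on $E_{x_o}\times(0,t_o]$.
\item[2.] Apply (ii) to conclude $u=0$ a.e.\ in $\rn\times(0,t_o]$.
\end{itemize}
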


Let us stress two points regarding Theorem~\ref{Thm:2}. First, this is a local property, independent of any initial or boundary data. The only global information we impose is the non-negativity of solutions, which cannot be removed in general, even in the simplest, elliptic setting.
Second, this kind of property is false for the degenerate $p$-parabolic equation \eqref{Eq:p-Laplace}; in addition, even if it resembles that of caloric functions, however, it disregards the connectedness of $E$. As already indicated, such property cannot be seen from the local estimate~\eqref{Thm:conclusion:1} alone; it is estimate~\eqref{Thm:conclusion:2} that captures this essentially nonlocal character. Finally, let us point out that our approach to Theorem~\ref{Thm:2}, can also be extended to general space-time domains.

\subsection{Method of proof}\label{S:method}
A basic regularity theory of DeGiorgi-Nash-Moser kind typically consists of establishing weak Harnack estimates of nonnegative super-solutions, H\"older estimates of solutions, and Harnack estimates of nonnegative solutions. In the context of {\it linear} parabolic equations, these issues are of comparable difficulty; establishing either of them requires independent measure theoretical arguments. However, when it comes to {\it nonlinear} parabolic equations, more refined perspectives are required for each of them, and the technical demands are on another level.

We provide a new perspective to the proof of nonlocal Harnack's inequalities in general, whether in the elliptic or parabolic setting. Namely, we pursue a direct proof of Harnack's inequality for solutions without first establishing a weak Harnack estimate for super-solutions, the latter being left as an interesting open problem. The scheme evolves out of the one laid out in our previous papers \cite{Liao-cvpd-24, Liao-mod-24, Liao-DD-24}.
Here, we single out two crucial, novel components.

$\bullet$~{\it Expansion of positivity}. Roughly speaking, it translates measure theoretical positivity of nonnegative super-solutions to a pointwise one. This property lies at the heart of any form of Harnack estimate, no matter the setting is elliptic or parabolic, divergence form or non-divergence form, local or nonlocal. 

However, turning this principle into an effective estimate is far from straightforward.
In order to establish an intrinsic version of expansion of positivity for the $p$-parabolic type equations, an exponential-time-shift technique is ingeniously introduced in \cite{DBGV-acta}. Loosely speaking, the technique stretches time at an exponential scale and compensates loss of information in the time direction due to inhomogeneity of the $p$-parabolic type equations. In view of a similar inhomogeneity of the $(s,p)$-parabolic type equations, it is conceivable that this technique could be applied here.
Indeed,  this has been done in \cite{Adi, Misawa-25}. Nonetheless, we think their results cannot be applied to prove the Harnack estimates, as they impose a stronger tail condition that will inevitably break the delicate balance between the local and nonlocal contributions; see further discussion on {\it Tail control} below. Here, we will prove a new version that conforms to the natural tail condition \eqref{Eq:global-int}; see~Proposition~\ref{Prop:expansion}. 

More importantly, and somehow unexpectedly, we are able to dispense with the exponential-time-shift technique and demonstrate a new perspective: the fractional Laplacian owns a special mechanism of propagating positivity that compensates the loss of information in the time direction due to the ``degenerate scaling'' inherent in the $(s,p)$-parabolic equation. The strong maximum principle is a manifestation of this mechanism. Such a point has been overlooked in the literature; we believe this idea could be generalized to attack more complicated nonlinear objects. 

Moreover, expansion of positivity has further important applications. Let us mention two instances here. First, it can be used to streamline the proof of the modulus of continuity given in \cite[\S~5]{Liao-mod-24} that instead exploited the complicated {\it sliding-cylinder method} of DiBenedetto. Needless to say, the same idea could also be used to fruitfully attack the continuity issue of other nonlinear, nonlocal equations; see~\S~\ref{S:appl} for discussions. Second, it lays the foundation of proving a weak Harnack type inequality for nonnegative super-solutions, which has been bypassed in the current work.

$\bullet$~{\it Tail control}.
Expansion of positivity and weak Harnack estimates  essentially depict local behavior of nonnegative super-solutions. To obtain Harnack estimates, they are insufficient; we need to acquire fine control of solutions' long-range behavior.

To illustrate the  point, let us recall the approach to the elliptic nonlocal Harnack estimates that appears in \cite{DKP-2}. There are three key elements.
\begin{itemize}
    \item[(i)] An interpolated $L^\infty$-estimate for sub-solutions: 
    \begin{align}\label{Eq:elliptic:1}
        \sup_{B_1}u \leqslant \dl\, \mathrm{Tail}[u; B_1] + \boldsymbol{\gm}(\dl)\|u\|_{L^p(B_2)},\quad\forall\, \dl\in(0,1)
    \end{align}
    \item[(ii)] A weak Harnack estimate for super-solutions: 
    \begin{align}\label{Eq:elliptic:2}
        \boldsymbol{\gm}\inf_{B_1}u \geqslant \|u\|_{L^p(B_2)} 
    \end{align}
    \item[(iii)] A ``$\mathrm{Tail}\leqslant \sup$'' type estimate for solutions: 
    \begin{align}\label{Eq:elliptic:3}
        \mathrm{Tail}[u; B_1]\leqslant \boldsymbol{\gm} \sup_{B_1}u 
    \end{align}
\end{itemize}
As said earlier, the weak Harnack estimate \eqref{Eq:elliptic:2} is a local property since it does not involve the tail, whereas a tail term, differently from local equations, appears in the $L^{\infty}$-estimate \eqref{Eq:elliptic:1}. The approach adopted in \cite{DKP-2} exploits estimate~\eqref{Eq:elliptic:3} and applies an interpolation argument to \eqref{Eq:elliptic:1} thanks to the parameter $\dl$, which eventually leads to 
\[
\sup_{B_1}u \leqslant   \boldsymbol{\gm} \|u\|_{L^p(B_2)}
\]
and completes the proof by joining it with \eqref{Eq:elliptic:2}.

This scheme breaks down in the parabolic setting, since an $L^\infty$-estimate that interpolates the natural parabolic tail and the local norm of $u$ generally fails to hold; see~\cite[\S~5.2]{Liao-DD-24} for an example and also the discussion following \cite[Theorem~1.9]{Kass-23}.
Instead, we have proposed a different point of view in \cite{Liao-DD-24} to read the local positivity of super-solutions out of their tail. More explicitly, we have established a measure theoretical estimate
\begin{equation}\label{Eq:elliptic:3.5}
 \frac{|\{u \leqslant  k \}\cap B_1|}{|B_1|}\leqslant  \frac{ \boldsymbol\gm k^{p-1}}{[{\rm Tail}[u; B_1]]^{p-1}} 
\end{equation}
for super-solutions; see~\cite[Lemma~7.3]{Liao-DD-24}.
Such an estimate replaces \eqref{Eq:elliptic:3}, bridges relevant local and nonlocal quantities with a finer, measure theoretical perspective, and directly yields that
\begin{align} \label{Eq:elliptic:4}
        \boldsymbol{\gm}\inf_{B_1}u \geqslant \mathrm{Tail}[u; B_1] 
    \end{align}
for super-solutions, which was previously unknown. By plugging \eqref{Eq:elliptic:2} and \eqref{Eq:elliptic:4} back in \eqref{Eq:elliptic:1}, we conclude the desired Harnack estimate without an explicit interpolation argument.

$\bullet$~{\it The road map}.
We will adapt these ideas in this work. In addition to  a proper version of \eqref{Eq:elliptic:3.5}, to confront the $(s,p)$-parabolic type equations, we need to develop measure theoretical counterparts of estimates~\eqref{Eq:elliptic:1} and \eqref{Eq:elliptic:2} as well. In particular, the expansion of positivity that we discussed previously will play the role of \eqref{Eq:elliptic:2}, whereas the role of \eqref{Eq:elliptic:1} is taken by the so-called {\it critical density lemma}, or equivalently by the right-hand estimate in \eqref{Thm:conclusion:2}. Once we have all the measure theoretical modules at our fingertips, we prove the Harnack estimates by ``juggling'' these modules as follows. 

The first, crucial step lies in establishing the forward estimate (right-hand side) of \eqref{Thm:conclusion:1}. Roughly speaking, if we know $u(0,0)=1$, then we can show there must be a tiny space ball below $t=0$, where $u$ is large in the measure theoretical sense by exploiting the critical density lemma together the analog of \eqref{Eq:elliptic:3.5}. Subsequently, this measure theoretical positivity will be translated into pointwise estimates of later times in a spatially expanded fashion, via the expansion of positivity, yielding the forward estimate of \eqref{Thm:conclusion:1}.

In the second step, we show that the backward estimate (left-hand side) of \eqref{Thm:conclusion:1} is in fact encoded in the forward estimate established in the first step. To wit, this step consists of a purely local argument.

The left-hand estimate in \eqref{Thm:conclusion:2} relies on what has been shown in the second step; it can be viewed as sort of ``weak Harnack estimate for solutions.''

Lastly, we show the right-hand estimate in \eqref{Thm:conclusion:2}, which is essentially an $L^\infty$-estimate. This step is independent of the previous three steps. In fact, this estimate can  replace the role of the critical density lemma in the first step. Therefore, it could also be labeled as the zeroth step.
%%%%%%
\subsection{Definitions and notation}%\label{S:notion}

Our notion of solution is a local concept that does not involve any initial or boundary data.

\begin{definition}[Local solution]\label{Def:sol}
A measurable function $u:\,\rn\times(0,T]\to\rr$ satisfying
\begin{equation}  \label{Eq:1:3p}
	u\in C_{\loc} (0,T;L^2_{\loc}(E) )\cap L^p_{\loc} (0,T; W^{s,p}_{\loc}(E) )
\end{equation}
is a local, weak sub(super)-solution to the $(s,p)$-parabolic type equations \eqref{Eq:1:1}--\eqref{Eq:K}, if for   every 
$[t_1,t_2]\subset (0,T]$, we have
\begin{equation}\label{Eq:global-int}
\int_{t_1}^{t_2}\int_{\rn}\frac{|u(x,t)|^{p-1}}{1+|x|^{N+sp}}\,\dx\dt<\infty
\end{equation}
and
\begin{equation}\label{Eq:1:4p}
\begin{aligned}
	\int_{E} \vp\, u\,\dx\bigg|_{t_1}^{t_2}
	-\int_{t_1}^{t_2}\int_{E}  \pl_t\vp\, u\, \dx\dt
	+\int_{t_1}^{t_2} \mathscr{E} (u(\cdot, t), \vp(\cdot, t) )\,\dt
	\leqslant(\geqslant)0
\end{aligned}
\end{equation}
where  
\begin{align*}
    \mathscr{E}(t):=\int_{\rn}\int_{\rn}\mathsf{K}(x,y,t) \Phi_p(u(x,t) - u(y,t) ) (\vp(x,t) - \vp(y,t) )\,\dy\dx
\end{align*}
and $\rr\ni\xi\mapsto\Phi_p(\xi):=|\xi|^{p-2}\xi$,
for all nonnegative testing functions
\begin{equation} \label{Eq:test-function}
\vp \in W^{1,2}_{\loc} (0,T;L^2(E) )\cap L^p_{\loc} (0,T;W_o^{s,p}(E)).
\end{equation}

A function $u$ that is both a local weak sub-solution and a local weak super-solution
to \eqref{Eq:1:1}--\eqref{Eq:K} is a local weak solution.
%When $T=\infty$, we define $u$ to be a local weak solution for every finite $T$.
\end{definition}
 
 The condition \eqref{Eq:global-int} gives rise to  the notation
\begin{equation}\label{Eq:tail}
{\rm Tail}[u; Q(R,S)]:= \int_{t_o-S}^{t_o} \int_{\rn\setminus B_R(x_o)}\frac{|u(x,t)|^{p-1}}{|x-x_o|^{N+sp}}\,\dx \dt.
\end{equation}
Here, and in what follows, 
we will use  the symbols 
\begin{equation*}
\left\{
\begin{aligned}
&(x_o,t_o)+Q(R,S):=B_R(x_o)\times (t_o-S,t_o]\\[5pt]
&(x_o,t_o)+Q_\rho(\theta):=B_{\rho}(x_o)\times(t_o-\theta\rho^{sp},t_o]\\[5pt]
&(x_o,t_o)+\mathbb{Q}(R,S):=B_R(x_o)\times B_R(x_o)\times (t_o-S,t_o]\\[5pt]
&(x_o,t_o)+\mathbb{Q}_\rho(\theta):=B_{\rho}(x_o)\times B_{\rho}(x_o)\times(t_o-\theta\rho^{sp},t_o]
\end{aligned}\right.
\end{equation*} 
to denote (backward) cylinders, where $B_\rho(x_o)$ denotes the ball of radius $\rho$ and center $x_o$ in $\rn$.
The vertex $(x_o,t_o)$ has been omitted from the cylinder in \eqref{Eq:tail} for simplicity. 
If $\theta=1$, it will also be omitted. When the context is unambiguous, we will apply these conventions.

Throughout this note,  we let 
$$\mathfrak{data}:=\{s, p, N, C_o, C_1\},$$ and we use $\boldsymbol\gm=\boldsymbol{\gm}(\mathfrak{data})$ as a generic positive constant that can be determined by the $\mathfrak{data}$ only and that can vary from one estimate to another.
%%%%%%%%%%%%%%%%%%%%%%%%%%%%%%
\subsection{Organization}
The article is structured as follows. In \S~\ref{S:energy}, we present the energy estimates.
Then, we collect some useful preparatory modules in \S~\ref{S:prelim} for the proof of the Harnack estimates, whereas expansion of positivity is singled out in \S~\ref{S:exp-pos}. Then, the proof of Theorem~\ref{Thm:1} and Theorem~\ref{Thm:2} is given in \S~\ref{S:Thm:1-proof} and in \S~\ref{S:Thm:2-proof}, respectively. Finally, we discuss applications to other nonlocal equations in \S~\ref{S:appl}.

%\

%%%%%%%%%%%%%%%%%%%%%%%%%%%%%

%%

%%%%%%%%%%%%%%%%%%%%%%%%%%%%%%
\section{Local energy estimates}\label{S:energy}

The energy estimates are taken from \cite[Proposition~2.1]{Liao-mod-24}.   The idea of using a time-dependent truncation level $k(t)$ is taken from \cite{Kass-23}.

\begin{proposition}\label{Prop:2:1}
	Let $u$ be a  local weak sub(super)-solution to \eqref{Eq:1:1}--\eqref{Eq:K} in $E_T$, and
	let $k(\cdot)$ be  absolutely continuous  in $(0,T)$.
	There exists a constant $\boldsymbol \gm$ depending on $C_o$, $C_1$, and $p$, such that
 	for all cylinders $Q(R,S) \subset E_T$,
 	and  any nonnegative, Lipschitz function $\zeta(x,t)$ in $Q_{R,S}$ such that $\mathrm{spt}(\z(\cdot, t))\subset B_R$ for any $t \in (t_o - S, t_o)$,  we have
\begin{align*}
	&\iiint_{\mathbb{Q}(R,S)} \min\{\z^p(x,t),\,\z^p(y,t)\} \frac{|w_\pm(x,t) - w_\pm(y,t)|^p}{|x-y|^{N+sp}}\,\dx\dy\dt\\
	&\qquad+\iint_{Q(R,S)} \z^p w_{\pm}(x,t)\,\dx\dt \bigg[\int_{ \rn}  \frac{w^{p-1}_\mp(y,t)}{|x-y|^{N+sp}}\,\dy\bigg]\\
    &\qquad+\int_{B_R}\z^p w^2_{\pm}(x,t)\,\dx\bigg|_{t_o-S}^{t_o}\\
	&\quad\leqslant
	\boldsymbol\gm\iiint_{\mathbb{Q}(R,S)}\max\{w^p_{\pm}(x,t),\, w^p_{\pm}(y,t)\} \frac{|\z(x,t) - \z(y,t)|^p}{|x-y|^{N+sp}}\,\dx\dy\dt\\
	&\qquad+\boldsymbol\gm\iint_{Q(R,S)}\int_{\rn\setminus B_R} \z^p w_{\pm}(x,t)\frac{ w_{\pm}^{p-1}(y,t)}{|x-y|^{N+sp}}\,\dy\dx\dt %\bigg(\essup_{\substack{x\in\supp\z(\cdot, t)\\t\in(t_o-S,t_o)}} \int_{\rn\setminus B_R}\frac{ w_{\pm}^{p-1}(y,t)}{|x-y|^{N+sp}}\,\dy\bigg)
	\\
	&\qquad \mp 2\iint_{Q(R,S)}  k'(t) \z^pw_{\pm}(x,t)\,\dx\dt + \iint_{Q(R,S)} |\pl_t\z^p|w_{\pm}^2(x,t)\,\dx\dt.  %\\
%	&\phantom{\leqslant\,}
%	+\int_{B_R(x_o)\times \{t_o-S\}} \z^p \mathfrak g_\pm (u,k)\,\dx.
\end{align*}
Here, we have denoted $w(x,t)=u(x,t)-k(t)$ for simplicity.
\end{proposition}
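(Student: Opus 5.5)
The plan is to derive the estimate from the weak formulation \eqref{Eq:1:4p}, testing with $\vp=\pm w_\pm\z^p$ where $w=u-k(t)$. This test function is admissible: $\z(\cdot,t)$ is supported in $B_R$, so $w_\pm\z^p(\cdot,t)\in W^{s,p}_o$. The time derivative is not available a priori, so we handle it in the standard way: replace $u$ by its Steklov average (or a mollification in time) $[u]_h$, test with $\pm([u]_h-k)_\pm\z^p$, and let $h\to0$ at the end. We treat the sub-solution case with $w_+$; the super-solution case with $w_-$ is symmetric.

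\textbf{Time part.} Formally
\[
\iint \pl_t u\, w_+\z^p \,\dx\dt=\iint (\pl_t w+k')\,w_+\z^p \,\dx\dt .
\]
The first piece equals
\[
\tfrac12\int_{B_R}\z^p w_+^2\,\dx\Big|_{t_o-S}^{t_o}-\tfrac12\iint \pl_t\z^p\, w_+^2\,\dx\dt ,
\]
and the second piece is $\iint k'\z^p w_+\,\dx\dt$. Since $k$ is absolutely continuous, $\pl_t (u-k)_+ = (\pl_t u - k')\chi_{\{u>k\}}$, so the chain rule is justified. Multiplying by $2$ gives the boundary term on the left and the terms $\mp2\iint k'\z^pw_\pm$ and $\iint|\pl_t\z^p|w_\pm^2$ on the right.

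\textbf{Nonlocal part.} Split $\rn\times\rn$ into $B_R\times B_R$ and the two symmetric off-diagonal regions $B_R\times(\rn\setminus B_R)$ and $(\rn\setminus B_R)\times B_R$.

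On $B_R\times B_R$, use the standard pointwise inequality (cf. \cite{DKP-1}, \cite{Liao-mod-24}) at fixed $t$. With $a=w(x)$, $b=w(y)$, and $\z(x)\ge\z(y)$ without loss of generality,
\[
\Phi_p(a-b)\big(a_+\z^p(x)-b_+\z^p(y)\big)\ge c\,|a_+-b_+|^p\min\{\z^p(x),\z^p(y)\} + c\,a_+\,b_-^{p-1}\z^p(x)-\boldsymbol\gm \max\{a_+^p,b_+^p\}|\z(x)-\z(y)|^p .
\]
Here $w(x)-w(y)=u(x)-u(y)$ because $k$ depends only on $t$. The inequality is proved by cases. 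When both $a,b>0$, one writes $a_+\z^p(x)-b_+\z^p(y)=(a-b)\z^p(x)+b(\z^p(x)-\z^p(y))$ and absorbs the second term via Young's inequality together with $|\z^p(x)-\z^p(y)|\le p\max\{\z^{p-1}\}|\z(x)-\z(y)|$. When $a>0\ge b$, the product $|a-b|^{p-1}a$ dominates both $a^p$ and $a\,b_-^{p-1}$. The case $a,b\le0$ is trivial.

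The kernel bounds \eqref{Eq:K} convert the resulting terms into the Gagliardo-type term on the left and the $\max\{w^p\}|\z(x)-\z(y)|^p$ term on the right. The cross term $a_+b_-^{p-1}$ restricted to $y\in B_R$ produces part of the second left-hand term.

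On the off-diagonal region, $\z(y)=0$, so the integrand is $\mathsf K\,\Phi_p(w(x)-w(y))\,w_+(x)\z^p(x)$. Where $w(y)\le0$ this is bounded below by $w_+(x)\,w_-^{p-1}(y)$, since $w(x)-w(y)\ge w_-(y)$ when $w(x)>0$. This supplies the remaining part of the second left-hand term over $y\notin B_R$, which combined with the $B_R\times B_R$ piece yields the full integral over $\rn$. Where $w(y)>0$, the integrand is bounded below by $-w_+^{p-1}(y)w_+(x)\z^p(x)$, giving the tail term on the right. The factor $2$ from symmetry is absorbed into $\boldsymbol\gm$.

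\textbf{Main obstacle.} The main difficulty is the rigorous time regularization. One has to pass from Steklov averages to the limit in the nonlocal form and in the $k'$ term with only $u\in C(L^2)$. This requires the convergence $[u]_h\to u$ in $L^p(W^{s,p})$ and use of the global integrability \eqref{Eq:global-int} to control the tail under averaging. The pointwise algebraic inequality is routine, while the limiting argument is where care is needed. Since the proposition is quoted from \cite[Proposition~2.1]{Liao-mod-24}, one may alternatively just invoke that reference.
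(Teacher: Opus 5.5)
Your proposal is correct and follows the standard derivation. You regularize in time (Steklov averages or mollification), test with $w_\pm\z^p$, use the pointwise algebraic inequality on $B_R\times B_R$, and use the direct lower bound off the diagonal where $\z(y)=0$. The paper gives no proof of its own: it imports the estimate from \cite[Proposition~2.1]{Liao-mod-24}, with the time-dependent level $k(t)$ taken from \cite{Kass-23}. One point to keep in mind is that your argument yields the two nonlocal terms on the left with a factor of order $C_o$ rather than exactly $1$. That is how the statement's normalization should be read, and it is harmless, since the paper only ever uses the estimate up to constants depending on the data.
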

In the above statement, we took the upper sign of $\pm/\mp$ for sub-solutions and the lower sign for super-solutions. This convention will be applied to other statements in the article.

As already indicated in the introduction, the Harnack estimates are consequences solely of the energy estimates. Before proving them, we first collect some direct consequences of the energy estimates.

%%%%%%%%%%%%%%%%%%%%%%%%%%%%%%%%%%%

\section{Preparatory tools} \label{S:prelim}

Our general program has not changed.
Indeed, the first four results of this section  collect those from \cite{Liao-cvpd-24, Liao-mod-24}, whereas the last three results generalize those from \cite{ Liao-DD-24}. Moreover, all results in this section hold for any $p>1$. %with one exception--Lemma~\ref{Lm:DG:2}, which will improve the first lemma.

To state the first result of this section, let us introduce the reference cylinder
 $\mathcal{Q}:=B_R(x_o)\times(T_1,T_2]\subset E_T$.
Suppose the quantities $\boldsymbol\mu^{\pm}$ and $\boldsymbol\om$ satisfy
\begin{equation*}
	\boldsymbol\mu^+\geqslant\essup_{\mathcal{Q}}u,
	\quad 
	\boldsymbol\mu^-\leqslant\essinf_{\mathcal{Q}} u,
	\quad
	\boldsymbol\om\geqslant\boldsymbol\mu^+-\boldsymbol\mu^-.
\end{equation*}
With the above quantities, we state the first result that is actually \cite[Lemma 3.1]{Liao-mod-24}; the dependence of $\nu$ can be traced in the same proof. 
\begin{lemma}[Critical density lemma]\label{Lm:DG:1}
 Let $u$ be a locally bounded, local weak sub(super)-solution to \eqref{Eq:1:1}--\eqref{Eq:K} in $E_T$.
 For some parameters $a,\, \dl,\,\xi\in(0,1)$ and $\rho\in(0,\frac12R]$, set $\theta=\dl(\xi\boldsymbol\om)^{2-p}$, and assume $ (x_o,t_o)+Q_\varrho(\theta) \subset \mathcal{Q}$.
There exist $\widetilde{\boldsymbol\gm}>1$ depending only on 
 the  $\mathfrak{data}$ and   $\nu\in(0,1)$ depending on the $\mathfrak{data}$, $a$, and $\dl$, such that if
\begin{equation*}
	|\{
	\pm(\boldsymbol \mu^{\pm}-u)\leqslant  \xi\boldsymbol\om\}\cap  (x_o,t_o)+Q_{\varrho}(\theta)|
	\leqslant
	\nu|Q_{\varrho}(\theta)|,
\end{equation*}
then either
\begin{equation*}%\label{Eq:DG:alt}
\widetilde{\boldsymbol\gm} 
{\rm Tail}[(u - \boldsymbol \mu^{\pm})_\pm; \mathcal{Q}]
%\int^{T_2}_{T_1}\int_{\rn\setminus B_R(x_o)}\frac{ \big(u - \boldsymbol \mu^{\pm}\big)_{\pm}^{p-1}}{|x|^{N+sp}}\,\dx\dt
>\tfrac12 a\xi\boldsymbol\om
\end{equation*}
or
\begin{equation*}
	\pm(\boldsymbol\mu^{\pm}-u)\geqslant \tfrac12 a\xi\boldsymbol\om
	\quad
	\mbox{a.e.~in $(x_o,t_o)+ Q_{\frac{1}2\varrho}(\theta)$.}
\end{equation*}
Moreover, we have  
\begin{equation}\label{Eq:nu-dep}
    \nu=\frac{(1-a)^{q_1}\dl^{q_2}}{\widetilde{\boldsymbol\gm}}
\end{equation} 
for some $q_1,\,q_2>0$ depending on $p$, $N$ and $s$.
\end{lemma}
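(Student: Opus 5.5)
The plan is a De Giorgi iteration driven by the energy estimate of Proposition~\ref{Prop:2:1}. I treat the super-solution case (lower sign), so the relevant functions are $w_-=(u-k_n)_-$ with levels above $\boldsymbol\mu^-$; the sub-solution case is symmetric.

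\textbf{Setup.} Set
\[
\rho_n=\tfrac12\rho+2^{-n-1}\rho,\qquad \tilde\rho_n=\tfrac12(\rho_n+\rho_{n+1}),\qquad k_n=\boldsymbol\mu^-+\tfrac12 a\xi\boldsymbol\om+2^{-n}(1-a)\tfrac12\xi\boldsymbol\om .
\]
Thus $k_0\leqslant\boldsymbol\mu^-+\xi\boldsymbol\om$ and $k_n\downarrow\boldsymbol\mu^-+\tfrac12a\xi\boldsymbol\om$. The levels are constant in time, so $k'=0$ in Proposition~\ref{Prop:2:1}. Let $Q_n=(x_o,t_o)+Q_{\rho_n}(\theta)$ and
\[
Y_n=\frac{|\{u<k_n\}\cap Q_n|}{|Q_n|}.
\]
Choose cutoffs $\z_n$ that equal $1$ on $Q_{\tilde\rho_n}$, vanish outside $Q_{\rho_n}$, and satisfy $|\nabla\z_n|\lesssim 2^n/\rho$ and $|\pl_t\z_n|\lesssim 2^{nsp}/(\theta\rho^{sp})$.

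\textbf{Energy bounds.} Apply Proposition~\ref{Prop:2:1} with $w_-\leqslant\xi\boldsymbol\om$ and $w_-=0$ off $\{u<k_n\}$.
\begin{itemize}
\item[(a)] The fractional cutoff term is bounded by $\boldsymbol\gm 2^{np}\rho^{-sp}(\xi\boldsymbol\om)^p|\{u<k_n\}\cap Q_n|$.
\item[(b)] The time-derivative term is bounded by $\boldsymbol\gm 2^{nsp}\theta^{-1}\rho^{-sp}(\xi\boldsymbol\om)^2|\{u<k_n\}\cap Q_n|$. Since $\theta=\dl(\xi\boldsymbol\om)^{2-p}$, this equals $\dl^{-1}\rho^{-sp}(\xi\boldsymbol\om)^p$ times the same measure, which is the source of the $\dl^{q_2}$ dependence in $\nu$.
\item[(c)] The tail term splits into two parts. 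For $y\in B_R\setminus B_{\rho_n}$, $u\geqslant\boldsymbol\mu^-$, so $w_-(y)\leqslant\xi\boldsymbol\om$; using $|x-y|\geqslant 2^{-n-3}|y-x_o|$ for $x$ in the support of $\z_n$, this part is bounded like (a) with an extra factor $2^{n(N+sp)}$. For $y$ outside $B_R$, $w_-\leqslant(u-\boldsymbol\mu^-)_-+\xi\boldsymbol\om$, and this part is controlled by
\[
2^{n(N+sp)}\,\xi\boldsymbol\om\,\bigl[(\xi\boldsymbol\om)^{p-1}\rho^{-sp}+{\rm Tail}[(u-\boldsymbol\mu^-)_-;\mathcal Q]\,R^{-sp}\bigr]\,|\{u<k_n\}\cap Q_n| .
\]
The $t$-integration is already contained in the Tail. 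This is where the alternative enters: assume $\widetilde{\boldsymbol\gm}\,{\rm Tail}\leqslant\tfrac12a\xi\boldsymbol\om$. The Tail, which is integrated in time and measured in units of $\xi\boldsymbol\om$ (not $(\xi\boldsymbol\om)^{p-1}$), must then be compared against $\theta\rho^{sp}$. This bookkeeping is the main obstacle: one has to check the homogeneity so that, under this assumption, the tail contribution is absorbed into $\boldsymbol\gm\dl^{-1}\rho^{-sp}(\xi\boldsymbol\om)^p|\{u<k_n\}\cap Q_n|$. Concretely, I will bound the time integral of the inner tail by its supremum in $x$, using the pointwise separation $|x-y|\gtrsim|y-x_o|$ for $|y-x_o|\geqslant R\geqslant2\rho$, and then use $\rho\leqslant R$.
\end{itemize}

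\textbf{Iteration.} The estimates above give a bound on $\sup_t\int(w_-\z_n)^2$ plus the $W^{s,p}$ seminorm. Feed this into the fractional parabolic Sobolev embedding with exponent $p(N+2)/N$-type, applied to $(u-k_n)_-\z_n$ on $Q_{\tilde\rho_n}$. On $\{u<k_{n+1}\}$ the function $(u-k_n)_-$ is at least
\[
k_n-k_{n+1}=2^{-n-2}(1-a)\xi\boldsymbol\om ,
\]
which bounds $|\{u<k_{n+1}\}\cap Q_{n+1}|$ from above. After normalising by $|Q_n|=|B_{\rho_n}|\theta\rho_n^{sp}$, the powers of $\xi\boldsymbol\om$ cancel because of the choice of $\theta$, and one obtains
\[
Y_{n+1}\leqslant \boldsymbol\gm\, b^n\,(1-a)^{-p}\dl^{-c}\,Y_n^{1+\frac{sp}{N+2}}
\]
for some $b,c$ depending on $\mathfrak{data}$. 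The standard fast geometric convergence lemma then gives $Y_n\to0$ provided
\[
Y_0\leqslant\nu=\frac{(1-a)^{q_1}\dl^{q_2}}{\widetilde{\boldsymbol\gm}} .
\]
Since $Q_{\frac12\rho}(\theta)\subset Q_n$ for every $n$, this yields $u\geqslant\boldsymbol\mu^-+\tfrac12a\xi\boldsymbol\om$ a.e. in $Q_{\frac12\rho}(\theta)$. Here $\widetilde{\boldsymbol\gm}$ is fixed first from the tail absorption (depending only on $\mathfrak{data}$), and $\nu$ is fixed afterwards, so the exponents $q_1,q_2$ depend only on $p$, $N$, $s$ through the Sobolev exponent.
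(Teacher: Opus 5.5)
There is a genuine gap in your step (c). It sits in the far part of the tail, $y\notin B_R(x_o)$, which is exactly the point you set aside as ``bookkeeping''. Write
\[
g(t):=\int_{\rn\setminus B_R(x_o)}\frac{(u-\boldsymbol\mu^-)_-^{p-1}(y,t)}{|y-x_o|^{N+sp}}\,\dy,\qquad \mathrm{Tail}[(u-\boldsymbol\mu^-)_-;\mathcal{Q}]=\int_{T_1}^{T_2}g(t)\,\dt .
\]
With constant levels, the far-tail term in Proposition~\ref{Prop:2:1} is of size $\xi\boldsymbol\om\int g(t)\,|\{u(\cdot,t)<k_n\}\cap B_{\rho_n}|\,\dt$. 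The hypothesis controls $g$ only in $L^1$ in time. You cannot pull $\int g\,\dt$ out of this time integral and then multiply it by the space-time measure $|\{u<k_n\}\cap Q_n|$, so your displayed bound is false. It even has the wrong homogeneity compared with $(\xi\boldsymbol\om)^p\rho^{-sp}|A_n|$: there is an extra factor of time and a spurious $R^{-sp}$. Making it consistent would amount to replacing $g$ by its time average, which is legitimate only if $g$ is essentially constant in time.

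In general $g$ may concentrate on a short time interval where the slice measure is comparable to $|B_{\rho_n}|$. The honest bound is therefore $\xi\boldsymbol\om\,\mathrm{Tail}\cdot\sup_t|\{u(\cdot,t)<k_n\}\cap B_{\rho_n}|$, which is not a multiple of $Y_n$. If you route it through the $\sup_t\int w_-^2$ term, the recursion is only linear in this slice quantity, with a coefficient of order $4^n/(\widetilde{\boldsymbol\gm}(1-a)^2)$ coming from $(k_n-k_{n+1})^{-2}$. Fast geometric convergence then fails for every fixed $\widetilde{\boldsymbol\gm}$. Taking the supremum in $x$ and using $\rho\leqslant R$ act on the wrong variable. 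With time-independent levels, your scheme works only under an $L^\infty$-in-time tail condition $\sup_t g(t)\lesssim(\xi\boldsymbol\om)^{p-1}\rho^{-sp}$. That is strictly stronger than the lemma's hypothesis, and it is precisely what the paper is designed to avoid.

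The missing idea is the time-dependent truncation level that Proposition~\ref{Prop:2:1} is built to accommodate. The paper carries this out in the proof of Lemma~\ref{Lm:DG:2} and in the second representation estimate of \S~\ref{S:Thm:1-proof}, and refers to the same procedure for this lemma. Take $k_n=\boldsymbol\mu^-+a\xi\boldsymbol\om+2^{-n}(1-a)\xi\boldsymbol\om$ and
\[
w_-=(u+\ell(t)-k_n)_-,\qquad \ell(t):=\widetilde{\boldsymbol\gm}\int_{T_1}^{t}g(\tau)\,\d\tau ,
\]
that is, $k(t)=k_n-\ell(t)$. Then the $k'$-term equals $-2\iint\ell'(t)\z^pw_-\,\dx\dt\leqslant0$. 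You have $|x-y|\geqslant\frac12|y-x_o|$ for $x\in B_\rho(x_o)$ and $|y-x_o|\geqslant R\geqslant2\rho$, and also $w_-(y,t)\leqslant(u-\boldsymbol\mu^-)_-(y,t)+\xi\boldsymbol\om$. Hence a choice of $\widetilde{\boldsymbol\gm}$ depending only on the data makes the $k'$-term cancel the $g$-part pointwise in $t$. What remains is proportional to $|\{u+\ell<k_n\}\cap Q_n|$, as in your (a), (b) and the near part of (c).

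Because $\ell\geqslant0$, you have $\{u+\ell<k_0\}\subset\{u\leqslant\boldsymbol\mu^-+\xi\boldsymbol\om\}$, so the smallness hypothesis still gives $Y_0\leqslant\nu$. Your iteration then yields $u+\ell(t)\geqslant\boldsymbol\mu^-+a\xi\boldsymbol\om$ a.e.\ in $(x_o,t_o)+Q_{\frac12\rho}(\theta)$. The tail alternative $\widetilde{\boldsymbol\gm}\,\mathrm{Tail}\leqslant\frac12a\xi\boldsymbol\om$ is used only to ensure $\ell(T_2)\leqslant\frac12a\xi\boldsymbol\om$. This is where the factor $\frac12$ in the conclusion comes from. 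The rest of your plan is fine: the intrinsic cancellation of $\xi\boldsymbol\om$, the $\dl^{-1}$ from the time cutoff, fast geometric convergence, and fixing $\widetilde{\boldsymbol\gm}$ before $\nu$.
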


The second result comes from \cite[Lemma 3.2]{Liao-mod-24} by taking $\mu^-=0$; here and in what follows, we only need the case of nonnegative super-solutions.
\begin{lemma}\label{Lm:DG:initial:1}
Let $u$ be a local weak super-solution to \eqref{Eq:1:1}--\eqref{Eq:K} that is nonnegative in $\mathcal{Q}$, and let $k>0$ be a parameter. 
There exist $\dl_o\in(0,1)$ and $\widetilde{\boldsymbol\gm}>1$ depending only on the $\mathfrak{data}$, such that if
\[
u(\cdot, t_o) \geqslant k \quad\text{ a.e. in } B_{\rho}(x_o),
\]
then either
\begin{equation*}
\widetilde{\boldsymbol\gm}
{\rm Tail}[u_-; \mathcal{Q}]
%\int^{T_2}_{T_1}\int_{\rn\setminus B_R(x_o)}\frac{ \big(u - \boldsymbol \mu^{\pm}\big)_{\pm}^{p-1}}{|x|^{N+sp}}\,\dx\dt
>k
\end{equation*}
or
\[
u\geqslant \tfrac14k\quad\text{ a.e. in }B_{\frac12\rho}(x_o)\times(t_o, t_o+\dl_o k^{2-p}\rho^{sp}],
\]
provided $B_{2\rho}(x_o) \times(t_o,t_o+\dl_o k^{2-p}\rho^{sp}]\subset\mathcal{Q}$.
\end{lemma}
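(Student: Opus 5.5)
We prove Lemma~\ref{Lm:DG:initial:1} by a De Giorgi iteration forward in time, based on the energy estimate of Proposition~\ref{Prop:2:1} for super-solutions (lower sign). The test function $\zeta$ will depend only on space, so there is no $\pl_t\zeta$ term; the initial datum at $t_o$ supplies the control at the bottom of the cylinder. To take $\pl_t u$ into account we choose the constant truncation levels
\[
k_n=\tfrac14k+\tfrac{k}{2^{n+2}},\qquad \rho_n=\tfrac12\rho+\tfrac{\rho}{2^{n+1}},\qquad \tilde\rho_n=\tfrac{\rho_n+\rho_{n+1}}2 ,
\]
with $k_n\downarrow \frac14 k$ and $\rho_n\downarrow\frac12\rho$. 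We work in the forward cylinders $Q_n=B_{\rho_n}\times(t_o,t_o+\dl_o k^{2-p}\rho^{sp}]$ and take $\zeta_n$ equal to $1$ on $B_{\tilde\rho_n}$, supported in $B_{\rho_n}$, with $|\nabla\zeta_n|\le 2^{n+3}/\rho$. Since $k_n<k$ and $u(\cdot,t_o)\ge k$ in $B_\rho$, we have $w_-=(u-k_n)_-=0$ at $t=t_o$. Hence the energy estimate (applied on the interval starting at $t_o$, which is legitimate by the $C(L^2)$ regularity) bounds $\sup_t\int \zeta^p w_-^2$ plus the fractional seminorm of $w_-\zeta$ by two terms. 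The first is the local term $\boldsymbol\gm 2^{np}\rho^{-sp} k_n^p |A_n|$, where $A_n=\{u<k_n\}\cap Q_n$ and we used $w_-\le k_n\le k$, valid because $u\ge0$ in $\mathcal Q$. The second is the tail term.

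For the tail term we use that $u\ge0$ in $\mathcal Q\supset B_{2\rho}\times(\cdots)$. For $y\notin B_{\rho_n}$ we split $w_-(y)\le k$ when $y\in B_R$, and $w_-(y)\le k+u_-(y)$ outside. Since $|x-y|\gtrsim 2^{-n}|y-x_o|$ for $x\in\operatorname{spt}\zeta_n$, the term is bounded by
\[
\boldsymbol\gm 2^{n(N+sp)}\Big(\frac{k^{p-1}}{\rho^{sp}}|A_n|\,k + k\int_{A_n^t}\!\!\int_{\rn\setminus B_R}\frac{u_-^{p-1}}{|y-x_o|^{N+sp}}\,\dy\Big).
\]
Under the alternative $\widetilde{\boldsymbol\gm}\,{\rm Tail}[u_-;\mathcal Q]\le k$, the time integral of the inner tail is at most $k/\widetilde{\boldsymbol\gm}$. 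The delicate point is that this is an integral in time, not a sup, so it cannot be attached to $|A_n|$ directly. I would handle it as in \cite{Liao-mod-24}: put the tail in time-dependent form, $k'(t)$-style, using the freedom in Proposition~\ref{Prop:2:1}. Concretely, replace $k_n$ by $k_n(t)=k_n-\frac{k}{2^{n+3}}\frac{\int_{t_o}^t T(\tau)\,d\tau}{k/\widetilde{\boldsymbol\gm}}$, where $T(\tau)$ is the outer tail at time $\tau$. The $k'(t)$ term then carries the favorable sign $+2\iint k_n'\zeta^p w_-\le0$ and absorbs the tail contribution, while the levels stay between $\frac14 k$ and $k$. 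This is the step I expect to be the main obstacle: tuning $\widetilde{\boldsymbol\gm}$ so that the drift of the levels is summable in $n$ and is dominated by the $k'$ term.

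With the energy estimate in hand, the rest is a standard iteration. We set the intrinsic time $\theta=\dl_o k^{2-p}\rho^{sp}$ and $Y_n=|A_n|/|Q_n|$. Combining the fractional Sobolev embedding (parabolic version, as used for Lemma~\ref{Lm:DG:1}) with the lower bound $w_-\ge k_n-k_{n+1}=k2^{-n-3}$ on $A_{n+1}$, we get
\[
Y_{n+1}\le \boldsymbol\gm\, b^n\Big(\frac{\theta k^{p-2}}{\rho^{sp}}\Big)^{\frac{sp}{N+sp}}Y_n^{1+\frac{sp}{N+sp}}\cdot(\text{powers of }\dl_o\text{ balanced}).
\]
More precisely, the smallness of $\theta$ makes $Y_o\le1$ sufficiently small relative to the constant. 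Tracking the factors, the right side is $\boldsymbol\gm b^n\dl_o^{\frac{sp}{N+sp}}Y_n^{1+\frac{sp}{N+sp}}$. Since $Y_o\le1$, choosing $\dl_o$ small in terms of the $\mathfrak{data}$ gives $Y_n\to0$ by the fast geometric convergence lemma. Therefore $u\ge\frac14k$ a.e. in $B_{\frac12\rho}\times(t_o,t_o+\dl_o k^{2-p}\rho^{sp}]$.
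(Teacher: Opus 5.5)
Your overall architecture is right and matches the mechanism of this lemma, which is also used in the proof of Lemma~\ref{Lm:DG:2}: a De Giorgi iteration forward in time on cylinders shrinking only in space, zero initial energy because $u(\cdot,t_o)\geqslant k$ in $B_\rho$, a time-dependent level to absorb the tail, and smallness of $\dl_o$ to start the iteration. However, the step you flag as the main obstacle does not close with the levels you propose, so as written there is a genuine gap. Write $T(t)=\int_{\rn\setminus B_R(x_o)}u_-^{p-1}(y,t)\,|y-x_o|^{-N-sp}\,\dy$, as in your notation. With $k_n(t)=k_n-2^{-(n+3)}\widetilde{\boldsymbol\gm}\int_{t_o}^t T(\tau)\,\d\tau$, the favorable term is $2\iint k_n'\z^pw_-\,\dx\dt=-2^{-(n+2)}\widetilde{\boldsymbol\gm}\iint T(t)\z^pw_-\,\dx\dt$, and its coefficient decays in $n$. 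The far-tail term it must cancel is $\boldsymbol\gm\,2^{n(N+sp)}\iint T(t)\z^pw_-\,\dx\dt$ in your own estimate, and in any case its coefficient does not decay. So no fixed $\widetilde{\boldsymbol\gm}$ works for all $n$. There is a second problem: in your display you have already replaced the weight $\z^pw_-(x,t)$ in front of the outer tail by $k\mathbf 1_{A_n}$. After that, a term of the form $\iint k'\z^pw_-$ can no longer absorb it, because the cancellation must occur with the same weight $\z^pw_-(x,t)$, pointwise in $(x,t)$.

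The missing observation is that only the near part of the tail carries the factor $2^{n(N+sp)}$.

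\emph{Splitting the tail.} Split $\rn\setminus B_{\rho_n}$ into $B_R\setminus B_{\rho_n}$ and $\rn\setminus B_R$.
On the first piece, $u\geqslant0$ in $\mathcal Q$ gives $w_-\leqslant k$. Take $\z_n$ supported in a ball $B_{\widehat\rho_n}$ lying a distance $\sim 2^{-n}\rho$ inside $B_{\rho_n}$; support in $B_{\rho_n}$ alone does not give $|x-y|\gtrsim 2^{-n}|y-x_o|$. Then this piece contributes $\boldsymbol\gm\,2^{n(N+sp)}k^p\rho^{-sp}|A_n|$, which is a local term.
On the second piece, $x\in B_\rho$ and $|y-x_o|\geqslant R\geqslant 2\rho$ give $|x-y|\geqslant\frac12|y-x_o|$ uniformly in $n$. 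So it is bounded by $\boldsymbol\gm\iint\z^pw_-(x,t)\,[k^{p-1}\rho^{-sp}+T(t)]\,\dx\dt$, with $\boldsymbol\gm$ independent of $n$.

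\emph{A single shift.} Use one shift for all $n$: $w_-=(u-k_n+\ell(t))_-$ with $k_n=\frac12k+2^{-(n+1)}k$ and $\ell(t)=\widetilde{\boldsymbol\gm}\int_{t_o}^tT(\tau)\,\d\tau$. Then:
\begin{itemize}
\item The $k'$ term equals $-2\widetilde{\boldsymbol\gm}\iint T(t)\z^pw_-\,\dx\dt$. It cancels the far tail for every $n$ once $2\widetilde{\boldsymbol\gm}$ exceeds that $n$-independent constant.
\item The gaps $k_n-k_{n+1}$ are untouched, so nothing has to be summable in $n$.
\item The initial energy still vanishes, since $\ell(t_o)=0$ and $k_n\leqslant k$.
\item The limit gives $u\geqslant\frac12k-\ell(t)\geqslant\frac14k$ as soon as $4\widetilde{\boldsymbol\gm}\,{\rm Tail}[u_-;\mathcal Q]\leqslant k$, which is exactly the tail alternative.
\end{itemize}
The rest of your iteration goes through as you describe. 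One small correction: the gain exponent from the $L^\infty L^2\cap L^pW^{s,p}$ embedding is $\frac{sp}{N+2s}$ rather than $\frac{sp}{N+sp}$, which does not affect the argument.
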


 The third result is comes from \cite[Lemma 3.3]{Liao-mod-24}
%The next result propagates measure forward in time.
\begin{lemma}\label{Lm:meas-propogation}
 Let $u$ be a local weak super-solution to \eqref{Eq:1:1}--\eqref{Eq:K} that is nonnegative in $\mathcal{Q}$.
Introduce parameters $k>0$ and $\al\in(0,1)$. There exists  $\dl \in(0,1)$ depending only on the $\mathfrak{data}$ and $\al$, such that if
	\begin{equation*}
	|\{
		u(\cdot, t_o)\geqslant k
		\}\cap B_{\varrho}(x_o)|
		\geqslant\al |B_{\varrho}|,
	\end{equation*}
	then %for $\widetilde{\boldsymbol\gm}=1/\dl$, 
	either 
$$ 
\frac1\dl 
{\rm Tail}[u_-; \mathcal{Q}]
%\int^{T_2}_{T_1}\int_{\rn\setminus B_R}\frac{ \big(u - \boldsymbol \mu^{\pm}\big)_{\pm}^{p-1}}{|x|^{N+sp}}\,\dx\dt
>k
$$ 
	or
	\begin{equation*}%\label{Eq:3:1}
	|\{
	u(\cdot, t)\geqslant \tfrac14\al k\} \cap B_{\varrho}(x_o)|
	\geqslant\tfrac{1}2\al |B_\varrho|
	\quad\mbox{ for all $t\in(t_o,t_o+\dl k^{2-p}\varrho^{sp}]$,}
\end{equation*}
provided $B_{2\rho}(x_o) \times(t_o,t_o+\dl k^{2-p}\rho^{sp}]\subset\mathcal{Q}$. Moreover, we have %$\varep=\frac14\al$ and 
$\dl=\al^{p+N+1}/\boldsymbol{\gm}$.
\end{lemma}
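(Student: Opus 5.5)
This is De Giorgi's measure-propagation argument in time, applied to the super-solution energy estimate of Proposition~\ref{Prop:2:1}. Take $x_o=0$. We work with $w_-=(u-k)_-$, where $k$ is constant in time so that $k'=0$, on the cylinder $B_\rho\times(t_o,t_o+\dl k^{2-p}\rho^{sp}]$. Choose a time-independent cutoff $\z=\z(x)$ with $\z=1$ on $B_{(1-\sigma)\rho}$, support in $B_\rho$, and $|\nabla\z|\leqslant \boldsymbol\gm/(\sigma\rho)$. The parameter $\sigma\in(0,1)$ is small and will be fixed in terms of $\al$.

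Since $\pl_t\z=0$, we drop the nonnegative fractional-energy term and the nonnegative term involving $w_+$ on the left, and keep only the time-slice term. With $0\leqslant w_-\leqslant k$ (because $u\geqslant0$ in $\mathcal Q$) this gives, for every $t$ in the interval,
\[
\int_{B_\rho}\z^p w_-^2(x,t)\,\dx\leqslant \int_{B_\rho}w_-^2(x,t_o)\,\dx+\boldsymbol\gm\frac{k^p}{\sigma^p\rho^{sp}}\,(t-t_o)\,|B_\rho|+\boldsymbol\gm\, k\,(t-t_o)\sup_{x\in B_{(1-\sigma/2)\rho}}\int_{\rn\setminus B_\rho}\frac{w_-^{p-1}(y,\tau)}{|x-y|^{N+sp}}\,\dy\,|B_\rho| .
\]
The middle term is the standard bound $\int_{\rn}|\z(x)-\z(y)|^p|x-y|^{-N-sp}\dy\leqslant\boldsymbol\gm(\sigma\rho)^{-sp}$.

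The two error terms are handled as follows.
\begin{itemize}
\item Initial term: since $w_-(\cdot,t_o)=0$ on $\{u(\cdot,t_o)\geqslant k\}$, it is at most $k^2(1-\al)|B_\rho|$.
\item Middle term: with $t-t_o\leqslant\dl k^{2-p}\rho^{sp}$ it is at most $\boldsymbol\gm\,\dl\,\sigma^{-p}k^2|B_\rho|$.
\item Tail term: for $y\notin B_\rho$ we have $w_-\leqslant k+u_-$. The part coming from $k$ integrates to about $k^{p-1}(\sigma\rho)^{-sp}$, so it contributes like the middle term, up to the factor $\sigma^{-N-sp}$ from the ratio $|y|/|x-y|$. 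The part coming from $u_-$ is controlled by $\boldsymbol\gm\sigma^{-N-sp}k\,{\rm Tail}[u_-;\mathcal Q]|B_\rho|$. If ${\rm Tail}[u_-;\mathcal Q]\leqslant\dl k$, which is what the first alternative failing means, this is again at most $\boldsymbol\gm\,\dl\,\sigma^{-N-sp}k^2|B_\rho|$.
\end{itemize}
We need to be careful that the tail is integrated in time over the whole interval rather than taken as a sup in time. Proposition~\ref{Prop:2:1} already has it in time-integrated form, so with $\z$ time-independent the bound is directly $k\,{\rm Tail}$, with no extra factor $(t-t_o)$. That is the better bound anyway.

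To conclude, bound the left side from below on the set where the level fails:
\[
\int_{B_\rho}\z^pw_-^2(x,t)\,\dx\geqslant \big(k-\tfrac14\al k\big)^2\,\big|\{u(\cdot,t)<\tfrac14\al k\}\cap B_{(1-\sigma)\rho}\big|.
\]
This yields
\[
|\{u(\cdot,t)<\tfrac14\al k\}\cap B_\rho|\leqslant \frac{(1-\al)+\boldsymbol\gm\,\dl\,\sigma^{-N-p}}{(1-\frac14\al)^2}|B_\rho|+N\sigma|B_\rho| .
\]
Since $(1-\al)/(1-\frac14\al)^2\leqslant 1-\frac32\al+O(\al^2)$, and more precisely $\leqslant 1-\al$ holds exactly when $\al\leqslant\frac12$ is not needed, we pick $\sigma=\al/\boldsymbol\gm$ to make $N\sigma\leqslant\frac18\al$. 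Then we pick $\dl=\al^{p+N+1}/\boldsymbol\gm$ so that $\boldsymbol\gm\dl\sigma^{-N-p}\leqslant\frac18\al$. This leaves at most $(1-\frac12\al)|B_\rho|$, which is the claim.

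The main obstacle is this final arithmetic: checking that $(1-\al)(1-\frac14\al)^{-2}\leqslant1-\frac34\al$ for all $\al\in(0,1)$, up to adjusting constants, and that the exponents of $\sigma$ combine to give the stated $\dl=\al^{p+N+1}/\boldsymbol\gm$. The tail bookkeeping, namely comparing $|x-y|$ with $|y|$ for $x\in\operatorname{spt}\z$, is routine but is where the $\sigma^{-N-sp}$ factor arises.
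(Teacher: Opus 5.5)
Your setup is the standard De Giorgi argument, and most of the bookkeeping is fine:
\begin{itemize}
\item a time-independent cutoff with $k'=0$;
\item an initial term of at most $k^2(1-\al)|B_\rho|$;
\item cutoff and tail errors of size $\boldsymbol\gm\dl\sigma^{-N-p}k^2|B_\rho|$ once ${\rm Tail}[u_-;\mathcal Q]\le\dl k$, with the tail used in its time-integrated form.
\end{itemize}
The gap is the final step: the inequality you rely on is false. One computes exactly
\[
1-\tfrac12\al-\frac{1-\al}{(1-\frac14\al)^2}=\frac{\al^2(10-\al)}{32(1-\frac14\al)^2}.
\]
Hence $(1-\al)(1-\frac14\al)^{-2}=1-\frac12\al-O(\al^2)$, not $1-\frac32\al+O(\al^2)$. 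It is never $\le 1-\al$, and it is not $\le 1-\frac34\al$ either: at $\al=\frac12$ it equals $0.653\ldots>0.625$. To first order, truncating at level $k$ and measuring below $\frac14\al k$ reproduces \emph{exactly} the target $1-\frac12\al$, leaving only an $O(\al^2)$ margin. Errors of size $\frac18\al$ from $N\sigma$ and from $\boldsymbol\gm\dl\sigma^{-N-p}$ cannot be absorbed into that margin. With your choices the bound comes out to about $(1-\frac14\al)|B_\rho|$, not $(1-\frac12\al)|B_\rho|$. Adjusting constants cannot help, because the level $\frac14\al k$ is fixed by the statement.

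There are two ways to repair this.

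\emph{Option 1.} Keep $w_-=(u-k)_-$ and make both errors at most $c\,\al^2$, i.e.\ $\sigma\approx\al^2$ and $\dl\approx\al^2\sigma^{N+p}$. This gives a power-like $\dl$, but with exponent about $2(N+p+1)$ instead of the stated $p+N+1$.

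\emph{Option 2.} To get the stated $\dl$, truncate higher: use $w_-=(u-\frac43k)_-$. Any level $Lk$ with $1<L<2$ gives a linear margin. Since $u\ge0$ in $\mathcal Q$, the initial term is at most $k^2[\frac{16}9(1-\al)+\frac19\al]|B_\rho|$. On $\{u(\cdot,t)<\frac14\al k\}$ you have $w_-\ge(\frac43-\frac14\al)k$. Because $(1-\frac9{16}\al)(\frac43-\frac14\al)^2=\frac{16}9-\frac53\al+\frac7{16}\al^2-\frac9{256}\al^3$, you get
\[
\frac{\frac{16}9-\frac53\al}{(\frac43-\frac14\al)^2}\le1-\tfrac9{16}\al\qquad\text{for all }\al\in(0,1),
\]
which is a margin of $\frac1{16}\al$ below the target. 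Choosing $N\sigma=\frac1{32}\al$ and $\boldsymbol\gm\dl\sigma^{-N-p}\le\frac1{32}\al$ then closes the argument with $\dl=\al^{N+p+1}/\boldsymbol\gm$. All your error estimates carry over with $k$ replaced by $\frac43k$.

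Two minor points. First, take ${\rm spt}\,\z\subset B_{(1-\sigma/2)\rho}$ so that your supremum over $B_{(1-\sigma/2)\rho}$ is legitimate. Second, because $u\ge0$ in $\mathcal Q$, the $u_-$ part of the tail lives outside $B_R\supset B_{2\rho}$. There $|x-y|\ge\frac12|y|$, so this part costs no power of $\sigma$.
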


The last result we quote from \cite[Lemma 3.4]{  Liao-mod-24} is a measure shrinking lemma.
%%%%%%%%
\begin{lemma}\label{Lm:meas-shrink:1}
 Let $u$ be a  local weak super-solution to \eqref{Eq:1:1}--\eqref{Eq:K} that is nonnegative in $\mathcal{Q}$.
 For  some parameters $k>0$, $\al\in (0,1)$ and $\dl,\, \sig\in(0,\tfrac12)$, let $\theta=\dl k^{2-p}$. Suppose that
	\begin{equation*}
	|\{
		u(\cdot, t)\geqslant k
		\}\cap B_{\varrho}(x_o)|
		\geqslant\al |B_{\varrho}|\quad\mbox{ for all $t\in(t_o-\theta\varrho^{sp}, t_o]$.}
	\end{equation*}
There exists
 $\boldsymbol \gm>1$ depending only on the $\mathfrak{data}$ and independent of $\{\al, \dl, \sig\}$,  such that 
 %for $\widetilde{\boldsymbol\gm}=1/\dl$,
  either 
  $$ 
\frac1\dl  
{\rm Tail}[u_-; \mathcal{Q}]
%\int^{T_2}_{T_1}\int_{\rn\setminus B_R}\frac{ \big(u - \boldsymbol \mu^{\pm}\big)_{\pm}^{p-1}}{|x|^{N+sp}}\,\dx\dt
>k
  $$ 
  or
\begin{equation*}
	|\{
	 u \leqslant \sig k \}\cap (x_o,t_o)+Q_{\rho}(\theta)|
	\leqslant \boldsymbol\gm \frac{\sig^{p-1}}{\dl\al} |Q_{\rho}(\theta)|,
\end{equation*}
provided $(x_o,t_o)+Q_{2\rho}(\theta)\subset\mathcal{Q}$.
\end{lemma}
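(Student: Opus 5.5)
This is the measure shrinking lemma, Lemma~\ref{Lm:meas-shrink:1}. I will follow the standard De Giorgi--DiBenedetto route: apply the energy estimate of Proposition~\ref{Prop:2:1} to truncations $(u-k_j)_-$ at the dyadic levels $k_j=2^{-j}k$. Let $j$ run from $0$ to $j_*$, where $2^{-j_*}\approx\sig$. Then I use a discrete isoperimetric inequality to bound the measure of the sublevel sets slice by slice. The key is that the nonlocal energy already gives a gain of order $\sig^{p-1}$ in one step, through the "good term" built from the $w_+$ integral. So the $j$-summation is not needed, and the plan can be kept short.

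\emph{Step 1 (energy).} I take $k(t)\equiv k$ constant and consider $w_-=(u-k)_-\leqslant k$, using the fact that $u\geqslant0$ in $\mathcal{Q}$. For the cutoff, $\z$ is independent of $t$, equals $1$ on $B_\rho$, is supported in $B_{3\rho/2}$, and has $|\nabla\z|\lesssim1/\rho$. The time interval is $(t_o-\theta\rho^{sp},t_o]$, so there is no time cutoff. The term at the final time is nonnegative and I discard it. The initial-time term is bounded by $k^2|B_{2\rho}|$. Proposition~\ref{Prop:2:1} then bounds the left-hand side by
\[
\boldsymbol\gm\,\frac{k^p}{\rho^{sp}}\,|Q_{2\rho}(\theta)|+\boldsymbol\gm\,k\,|Q_{2\rho}(\theta)|\,\frac{1}{\rho^{sp}}\,\frac{\mathrm{Tail}[u_-;\mathcal Q]\,\rho^{sp}}{\theta\rho^{sp}}+k^2|B_{2\rho}|.
\]
Here I use that $w_-\leqslant k+u_-$ outside, and that $|x-y|\gtrsim|y-x_o|$ for $x\in B_{3\rho/2}$ and $y\notin B_{2\rho}$. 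The $k$-part of the tail integral gives $k^p\rho^{-sp}$. Using $\theta\rho^{sp}=\dl k^{2-p}\rho^{sp}$, together with the alternative $\mathrm{Tail}\leqslant\dl k$, every term is $\lesssim k^p\rho^{-sp}|Q_\rho(\theta)|/\dl$. The initial term becomes $k^2|B_\rho|=k^p\rho^{-sp}\,\theta\rho^{sp}|B_\rho|/\dl$.

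\emph{Step 2 (exploit the positivity term).} On the left of Proposition~\ref{Prop:2:1} sits the term
\[
\iint_{Q_\rho(\theta)} w_-(x,t)\int_{B_\rho}\frac{w_+^{p-1}(y,t)}{|x-y|^{N+sp}}\,\dy\,\dx\dt.
\]
On the set $\{u\leqslant\sig k\}$ I have $w_-\geqslant(1-\sig)k\geqslant k/2$. At every time slice, $w_+=(u-k)_+$ is not yet positive where $u\geqslant k$, so I must use the level $2k$ instead. Concretely, I apply the argument with truncation level $\tilde k=2k$ wait—more simply, I write the whole argument at level $k$ but bound from below using the set $\{u\geqslant k\}$ and the truncation $(u-\tfrac12k)$. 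Hence I rescale: I run Step~1 with level $k/2$, where $w_+\geqslant k/2$ on $\{u\geqslant k\}$, and I replace $\sig$ by a fraction of $\sig$. For $x,y\in B_\rho$ I have $|x-y|^{-N-sp}\geqslant(2\rho)^{-N-sp}$. Since the set $\{u(\cdot,t)\geqslant k\}$ has measure at least $\al|B_\rho|$, the inner integral is $\gtrsim k^{p-1}\al\rho^{-sp}$.

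With the first route, the good term is bounded below by $k\cdot k^{p-1}\al\rho^{-sp}|\{u\leqslant\sig k\}\cap Q_\rho(\theta)|$. This only gives measure $\lesssim1/(\dl\al)$ without the $\sig$ gain. To capture $\sig^{p-1}$, I instead use the truncation level $k(t)\equiv\sig k$, or better $2\sig k$. Then $w_-\leqslant2\sig k$, and the right-hand side scales as $(\sig k)^p\rho^{-sp}$ for the first term. The tail term becomes $\sig k\cdot k^{p-1}$-type after using $w_-\leqslant\sig k+u_-$. The initial term becomes $(\sig k)^2|B_\rho|$. On the left, $w_-\geqslant\sig k$ on $\{u\leqslant\sig k\}$, and $w_+\geqslant k/2$ on $\{u\geqslant k\}$, so the good term is $\gtrsim\sig k\cdot k^{p-1}\al\rho^{-sp}|\{u\leqslant\sig k\}\cap Q_\rho(\theta)|$. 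Dividing, and using $\sig^{p}\leqslant\sig^{p-1}$ together with the tail alternative written as $\mathrm{Tail}\leqslant\dl k$ where it multiplies $w_-\leqslant2\sig k$, I get
\[
|\{u\leqslant\sig k\}\cap Q_\rho(\theta)|\leqslant\boldsymbol\gm\,\frac{\sig^{p-1}}{\dl\al}\,|Q_\rho(\theta)|.
\]

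\emph{Main obstacle.} The delicate point is the tail term. The external contribution $\int w_-^{p-1}(y)|x-y|^{-N-sp}\,\dy$ includes the region where $w_-$ is as large as $2\sig k+u_-$. Its part $(2\sig k)^{p-1}$ must give $\sig^{p-1}$, not $\sig$. Since it multiplies $w_-(x)\leqslant2\sig k$, it yields $\sig^pk^p$, which is fine. The $u_-$ part gives $\sig k\cdot\mathrm{Tail}/(\theta\rho^{sp})\cdot|B_\rho|\theta\rho^{sp}$, and this must be compared with $\sig k\cdot k^{p-1}\al\rho^{-sp}|Q_\rho(\theta)|$. The ratio is $\mathrm{Tail}/(k^{p-1}\al\,\theta\rho^{sp}/\rho^{sp})\cdot\rho^{-sp}$-scaled, which only gives a bound of order $1/(\dl\al)$ without $\sig^{p-1}$. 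So the tail alternative needs to be sharpened, or else I accept $\sig$ rather than $\sig^{p-1}$ there. I would first try to absorb this via the stated alternative, checking the precise normalization of $\mathrm{Tail}$ against $\theta$, and adjust it if needed.
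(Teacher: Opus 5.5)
Your architecture is the right one, and it is the one the paper relies on: Lemmas~\ref{Lm:meas-shrink:2}--\ref{Lm:meas-shrink:3}, which refer back to this lemma, are proved this way. You use a single energy estimate at a level comparable to $\sig k$. The positivity term on the left is restricted to $x,y\in B_\rho$, and the slice-wise hypothesis gives $\int_{B_\rho}w_+^{p-1}(y,t)|x-y|^{-N-sp}\,\dy\gtrsim\al k^{p-1}\rho^{-sp}$.

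But the argument does not close, and the tail is not the only obstruction.

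\textbf{The initial-time term.} With $\theta=\dl k^{2-p}$ and truncation level $2\sig k$, the initial-time term is $(2\sig k)^2|B_{2\rho}|$. Dividing by your lower bound $\gtrsim\sig\al k^p\rho^{-sp}\,|\{u\le\sig k\}\cap Q_\rho(\theta)|$ yields
$\sig k^{2-p}\rho^{sp}|B_\rho|/\al=\frac{\sig}{\dl\al}|Q_\rho(\theta)|$.
For $p>2$ this is much larger than $\frac{\sig^{p-1}}{\dl\al}|Q_\rho(\theta)|$. The inequality $\sig^p\le\sig^{p-1}$ only takes care of the cutoff term.

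\textbf{The tail term.} As you observed, the $u_-$-part of the tail under $\mathrm{Tail}[u_-;\mathcal Q]\le\dl k$ gives only $|Q_\rho(\theta)|/\al$.

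Re-normalizing the tail alone cannot repair the initial-time term. In fact, with $\theta=\dl k^{2-p}$ the rate $\sig^{p-1}$ should not be expected at all. Heuristically, take data between $0$ and $2k$, and a point at distance $\sim\rho$ from $\{u\ge k\}$ where $u$ starts at $0$. It rises at rate $\lesssim k^{p-1}\rho^{-sp}$, so it spends time $\sim\sig k^{2-p}\rho^{sp}$ below $\sig k$. That is a fraction $\sim\sig/\dl$ of the cylinder, which is exactly what your computation produces.

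\textbf{The missing idea.} The cylinder must be intrinsic to the \emph{lower} level: take $\theta=\dl(\sig k)^{2-p}$, with tail alternative $\frac{1}{\dl}\mathrm{Tail}[u_-;\mathcal Q]>\sig k$. This is how the lemma is actually used in the proof of Proposition~\ref{Prop:expansion}. There the cylinder is identified with $B_{4\rho}\times(0,\dl_1(\sig_1\varep k)^{2-p}(4\rho)^{sp}]$, and the tail condition reads $\frac{1}{\dl_1}\mathrm{Tail}[u_-;\mathcal Q]\le\sig_1\varep k$. The printed $\theta=\dl k^{2-p}$ has to be read in this sense.

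With this scaling, all terms on the right balance:
\begin{itemize}
\item the cutoff term is $\lesssim(\sig k)^p\rho^{-sp}\,\theta\rho^{sp}|B_\rho|=\dl\sig^2k^2|B_\rho|$;
\item the tail term is $\lesssim\sig k|B_\rho|\,[(\sig k)^{p-1}\theta+\mathrm{Tail}[u_-;\mathcal Q]]\lesssim\dl\sig^2k^2|B_\rho|$;
\item the initial term is $\lesssim\sig^2k^2|B_\rho|$.
\end{itemize}
Now divide by $\sig\al k^p\rho^{-sp}$ and use $|Q_\rho(\theta)|=\dl\sig^{2-p}k^{2-p}\rho^{sp}|B_\rho|$. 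This gives exactly $\boldsymbol\gm\frac{\sig^{p-1}}{\dl\al}|Q_\rho(\theta)|$. For $\sig\ge\frac14$, where $w_+\ge\frac12k$ on $\{u\ge k\}$ may fail, the estimate is trivial once $\boldsymbol\gm\ge4^{p-1}$.
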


The next three results generalize some linear ones from \cite{Liao-DD-24}.
The first generalization asserts that the smallness in measure at an instant yields pointwise estimate of later times. As such it can be viewed as a refinement of Lemma~\ref{Lm:DG:1} of the current work. For $p=2$, this kind of result has appeared in  \cite[Lemma~3.2]{Liao-DD-24}.
%%%%%%%%
\begin{lemma}\label{Lm:DG:2}
    Let $u$ be a local weak super-solution to \eqref{Eq:1:1}--\eqref{Eq:K} that is nonnegative in $\mathcal{Q}$.
 For some $k>0$ and $\rho\in(0,\frac12R)$, set $\theta=k^{2-p}$.
There exist  $\nu_o,\,\dl\in(0,1),\, \widetilde{\boldsymbol\gm}>1$ depending only on 
 the  $\mathfrak{data}$, such that if
\begin{equation*}
	|\{
	u(\cdot, t_o)\leqslant  k\}\cap  B_{\varrho}(x_o)|
	\leqslant
	\nu_o|B_{\varrho} |,
\end{equation*}
then either
\begin{equation*}%\label{Eq:DG:alt}
\widetilde{\boldsymbol\gm} 
{\rm Tail}[ u_-; \mathcal{Q}]
%\int^{T_2}_{T_1}\int_{\rn\setminus B_R(x_o)}\frac{ \big(u - \boldsymbol \mu^{\pm}\big)_{\pm}^{p-1}}{|x|^{N+sp}}\,\dx\dt
>k
\end{equation*}
or
\begin{equation*}
	u\geqslant \tfrac18k
	\quad
	\mbox{a.e.~in  $B_{\frac12\rho}(x_o) \times(t_o+\tfrac12 \dl\theta\rho^{sp},t_o+\dl\theta\rho^{sp}]$,}
\end{equation*}
provided that  $B_{2\rho}(x_o) \times(t_o,t_o+\dl\theta\rho^{sp}]\subset\mathcal{Q}$.
\end{lemma}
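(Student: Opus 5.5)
The plan is to combine the measure-propagation Lemma~\ref{Lm:meas-propogation} with a De Giorgi iteration of critical-density type, adapted so that it works forward in time from a single time slice. Set $\theta=k^{2-p}$ and suppose the tail alternative fails, that is $\widetilde{\boldsymbol\gm}\,{\rm Tail}[u_-;\mathcal{Q}]\leqslant k$.

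First, the hypothesis gives $|\{u(\cdot,t_o)\geqslant k\}\cap B_\rho|\geqslant(1-\nu_o)|B_\rho|$. Rather than invoking Lemma~\ref{Lm:meas-propogation} with $\al=1-\nu_o$, which would only give density $\tfrac12\al$, I would redo its energy argument directly. Apply Proposition~\ref{Prop:2:1} to $w_-=(u-k)_-$ with a cutoff $\z=\z(x)$ that is independent of time, supported in $B_\rho$, and equal to $1$ on $B_{\rho(1-\epsilon)}$. Integrating over $(t_o,t]$ with $t\leqslant t_o+\dl\theta\rho^{sp}$ yields
\[
\int_{B_{\rho(1-\epsilon)}}w_-^2(x,t)\,\dx\leqslant \int_{B_\rho}w_-^2(x,t_o)\,\dx+\boldsymbol\gm\,\dl\theta\rho^{sp}\,\epsilon^{-p}\rho^{-sp}k^p|B_\rho|+\text{tail terms}.
\]
The initial term is at most $k^2\nu_o|B_\rho|$, and the tail terms are controlled by $k^{2}\dl|B_\rho|$ via $\widetilde{\boldsymbol\gm}{\rm Tail}\leqslant k$, using $\theta=k^{2-p}$. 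It follows that $|\{u(\cdot,t)\leqslant\frac12k\}\cap B_{\rho}|\leqslant\boldsymbol\gm(\nu_o+\dl\epsilon^{-p}+\epsilon N)|B_\rho|$ for every $t\in(t_o,t_o+\dl\theta\rho^{sp}]$. The same bound then holds in space-time on the cylinder $B_\rho\times(t_o,t_o+\dl\theta\rho^{sp}]$. Its upper part $(t_o+\frac12\dl\theta\rho^{sp},t_o+\dl\theta\rho^{sp}]$ is a backward cylinder $Q_\rho(\tfrac12\dl\,(\tfrac12k)^{2-p}2^{2-p})$, i.e.\ of the form $\theta'=\dl'(\xi\boldsymbol\om)^{2-p}$ with $\xi\boldsymbol\om=\frac12k$, $\boldsymbol\mu^-=0$ and $\dl'\approx\dl$.

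Second, apply Lemma~\ref{Lm:DG:1} (lower sign, $\boldsymbol\mu^-=0$, $a=\frac12$) on this cylinder, whose vertex sits at $t_o+\dl\theta\rho^{sp}$. A sliding vertex then covers the whole set $B_{\frac12\rho}\times(t_o+\frac12\dl\theta\rho^{sp},t_o+\dl\theta\rho^{sp}]$. The drawback is that this needs the critical density $\nu(\dl)=\dl^{q_2}/\widetilde{\boldsymbol\gm}$ to exceed the measure bound $\boldsymbol\gm(\nu_o+\dl\epsilon^{-p}+\epsilon)$. This is the main obstacle, because $\nu$ shrinks polynomially in $\dl$ while $\dl\epsilon^{-p}+\epsilon$ cannot be made smaller than about $\dl^{1/(p+1)}$. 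To get around it, I would avoid Lemma~\ref{Lm:DG:1} as a black box and run its De Giorgi iteration directly on the cylinders $B_{\rho_n}\times(t_o,t_o+\dl\theta\rho^{sp}]$ with levels $k_n=\frac18k+2^{-n}\frac18k$.

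In that iteration the cutoffs vanish only on the lateral boundary and not at the initial time. The initial term $\int\z^pw_-^2(\cdot,t_o)$ is therefore kept and bounded by $k^2|\{u(\cdot,t_o)<k_n\}\cap B_{\rho_n}|\leqslant k^2\nu_o|B_\rho|$. Because the time length is $\dl\theta\rho^{sp}$, the sup-in-time and fractional energy terms combine through the parabolic fractional Sobolev embedding. This gives a recursion for $Y_n=|\{u<k_n\}\cap Q_n|/|Q_n|$ of the form $Y_{n+1}\leqslant \boldsymbol\gm\, b^n \dl^{-q}(Y_n^{1+\beta}+\nu_o Y_n^{\beta}\,\cdot)$. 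The initial-data term contributes a fixed forcing of size $\nu_o$, so the recursion is a standard lemma on fast geometric convergence with a constant source. I would fix $\dl$ first, using the lateral terms, and only then take $\nu_o$ small depending on $\dl$, which breaks the circularity. The iteration gives $u\geqslant\frac18k$ on the whole cylinder $B_{\frac12\rho}\times(t_o,t_o+\dl\theta\rho^{sp}]$, except that the decay of the initial influence is only guaranteed away from $t_o$. This is why I restrict to times after $t_o+\frac12\dl\theta\rho^{sp}$, where a time cutoff vanishing at $t_o+\frac14\dl\theta\rho^{sp}$ is used after the first step.
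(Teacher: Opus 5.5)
Your proposal has a genuine gap in its last paragraph. You correctly locate the obstacle: the one-shot, per-slice bound $\boldsymbol\gm(\nu_o+\dl\epsilon^{-p}+\epsilon)$ cannot be pushed below the critical density $\nu(\dl)=\dl^{q_2}/\widetilde{\boldsymbol\gm}$ of Lemma~\ref{Lm:DG:1}. Your last paragraph does not remove it, for three reasons.

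\begin{itemize}
\item A recursion with a non-decaying forcing, $Y_{n+1}\leqslant\boldsymbol\gm b^n(\cdots)(Y_n^{1+\beta}+\nu_oY_n^{\beta})$, does not give $Y_n\to0$. Once $Y_n$ reaches the size of the forcing, the factor $b^n$ eventually dominates. The fast geometric convergence lemma for two coupled sequences needs the second sequence to decay as well.
\item $Y_n\to0$ on cylinders starting at $t_o$ is in fact false in general. The hypothesis allows $u(\cdot,t_o)$ to vanish on a set of small positive measure inside $B_{\frac12\rho}$. Since $u\in C_{\loc}(0,T;L^2_{\loc}(E))$, this persists for a short time, so no iteration on such cylinders can give $u\geqslant\frac18k$ there.
\item The prefactor $\dl^{-q}$ you write is what a time cutoff produces. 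With it, shrinking $\dl$ only hurts, so ``fix $\dl$ first, using the lateral terms'' has no mechanism behind it. Your patch (one step keeping the initial term, then a time cutoff vanishing at $t_o+\frac14\dl\theta\rho^{sp}$) reintroduces $|\pl_t\z^p|w_-^2\sim k^p|A_n|/(\dl\rho^{sp})$. From the second step on you are then running exactly the iteration behind Lemma~\ref{Lm:DG:1}, which needs $Y_1\lesssim\dl^{q_2}$. A single energy step from $Y_0\leqslant1$ does not provide that; this is the same obstacle again.
\end{itemize}

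The missing idea is a dichotomy (stopping-time) argument on the forward cylinders $Q_n=B_{\rho_n}\times(t_o,t_o+\dl\theta\rho^{sp}]$. Use spatial cutoffs only, and pack the tail into a time-dependent level $\ell(t)$ via Proposition~\ref{Prop:2:1}. The energy at step $n$ is then bounded by
\[
\boldsymbol\gm\,2^{(N+2p)n}\frac{k^p}{\rho^{sp}}|A_n|+2^N\frac{k^p}{\dl\rho^{sp}}\,\nu_o|Q_n|,\qquad A_n=\{u<k_n\}\cap Q_n .
\]
As long as $|A_n|>\frac{\nu_o}{\dl}|Q_n|$, the initial term is absorbed into the first one. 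The bound then carries no factor $1/\dl$. Because the height is only $\dl\theta\rho^{sp}$, the De Giorgi recursion comes with a \emph{positive} power of $\dl$. So $\dl$ can be fixed by the data alone, independently of $\nu_o$, to drive $Y_n$ down geometrically. Hence at some finite $n$ one must have $|A_n|\leqslant\frac{\nu_o}{\dl}|Q_n|$, which gives
\[
|\{u<\tfrac12k\}\cap B_\rho\times(t_o,t_o+\dl\theta\rho^{sp}]|\leqslant 2^N\frac{\nu_o}{\dl}\,|B_\rho\times(t_o,t_o+\dl\theta\rho^{sp}]| .
\]
The smallness is now of order $\nu_o/\dl$ rather than $\dl$. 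Choosing $\nu_o\leqslant\dl^{q_2+1}/(2^{N+q_1}\widetilde{\boldsymbol\gm})$ \emph{after} $\dl$ puts it below $\nu(\dl)$ with $a=\frac12$. Lemma~\ref{Lm:DG:1}, applied with vertex at $t_o+\dl\theta\rho^{sp}$, then yields $u\geqslant\frac18k$. This is how the ordering ``$\dl$ first, then $\nu_o$'' is actually realized; your constant-source recursion and one-step patch do not achieve it.
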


\begin{proof}
After a translation, we may assume $(x_o,t_o)=(0,0)$; we will slightly abuse the symbol $\mathcal{Q}=B_R\times(T_1,T_2]$ to denote the shifted cylinder. We 
 introduce for $n\in \nn_0$,
\begin{align*}
	\left\{
	\begin{array}{c}
\dsty k_n= 2^{-1}k +2^{-(n+1)} k ,
\\[5pt]
\dsty \rho_n=\rho+2^{-n}\rho, \quad\widetilde{\rho}_n=\tfrac12(\rho_n+\rho_{n+1}),\\[5pt]
\dsty \widehat{\rho}_n=\tfrac34\rho_n+ \tfrac14\rho_{n+1},\quad \overline{\rho}_n=\tfrac14\rho_n+ \tfrac34\rho_{n+1},\\[5pt]
\dsty B_n=B_{\rho_n}, \quad \widetilde{B}_n=B_{\widetilde{\rho}_n},\quad \widehat{B}_n=B_{\widehat\rho_n},\quad \overline{B}_n=B_{\overline{\rho}_n},\\[5pt]
\dsty Q_n=B_n\times(0,\dl\theta\rho^{sp}],\quad\widetilde{Q}_n=\widetilde{B}_n\times(0,\dl\theta\rho^{sp}],\\[5pt]
\widehat{Q}_n=\widehat{B}_n\times(0,\dl\theta\rho^{sp}],\quad\overline{Q}_n=\overline{B}_n\times(0,\dl\theta\rho^{sp}].
\end{array}
	\right.
\end{align*}
Observe that $Q_{n+1}\subset\overline{Q}_n\subset\widetilde{Q}_n\subset\widehat{Q}_n\subset Q_n$ and their height is fixed to be $\dl\theta\rho^{sp}$, where $\dl$ is to be chosen.
The cutoff function $\z\in C_0^1(B_n,[0,1])$ satisfies $\z=1$ on $\widetilde{B}_n$ and $|\nabla \z|\leqslant 2^{n+4}/\rho$.
The energy estimate of Proposition~\ref{Prop:2:1} is used in $Q_n$ with $\z$ and
with 
$$
w_-(x,t):=(u(x,t)+\ell(t)-k_n)_-, \quad \ell(t):=\widetilde{\boldsymbol\gm}\int^t_{T_1}\int_{\rn\setminus B_R}\frac{ u_{-}^{p-1}(y,\tau)}{|y|^{N+sp}}\,\dy\d\tau,
$$
where $\widetilde{\boldsymbol\gm}>1$ is to be determined in terms of the $\mathfrak{data}$.
As a result, we have
\begin{align}\nonumber
	\max\bigg\{&\essup_{t\in[0,\dl \theta\rho^{sp}]} \int_{\widetilde{B}_n} w^2_{-}(x,t)\,\dx,\, \iiint_{\widetilde{\mathbb{Q}}_n}   \frac{|w_-(x,t) - w_-(y,t)|^p}{|x-y|^{N+sp}}\,\dx\dy\dt\bigg\} \\ \nonumber
	%&\qquad+\iint_{Q(R,S)} w_{\pm}(y,t)\z^p(y,t)\,\dy\dt \bigg(\int_{ B_R}  \frac{w^{p-1}_\mp(x,t)}{|x-y|^{N+sp}}\,\dx\bigg)
	 %+\int_{B_R}\z^p w^2_{\pm}(x,t)\,\dx\bigg|_{t_o-S}^{t_o}\\
	&\leqslant
	\boldsymbol\gm\iiint_{\mathbb{Q}_n} \max\{w^p_{-}(x,t), w^p_{-}(y,t)\} \frac{|\z(x) - \z(y)|^p}{|x-y|^{N+sp}}\,\dx\dy\dt\\\nonumber
	&\quad+\boldsymbol\gm\iint_{Q_n} \z^p w_{-}(x,t)\,\dx\dt \bigg[\essup_{\substack{x\in \widehat{B}_n}} \int_{\rn\setminus B_n}\frac{ w_{-}^{p-1}(y,t)}{|x-y|^{N+sp}}\,\dy\bigg]\\\label{Eq:energy-DG}
	&\quad-2\iint_{Q_n}  \ell^{\prime}(t) \z^pw_{-}(x,t)\,\dx\dt + \int_{B_n} w_{-}^2(x,0)\,\dx.
\end{align}
Here, the symbols $\widetilde{\mathbb{Q}}_n$ and $\mathbb{Q}_n$ should be self-explanatory.

Now, we deal with the right-hand side of \eqref{Eq:energy-DG}. 
The estimation of the first term  is standard, whereas the second and third terms need to be packed. This procedure also determines $\widetilde{\boldsymbol{\gm}}$ in the definition of $\ell(t)$, cf.~\cite[Lemma~3.1]{Liao-mod-24}.
 As a result, the first three terms on the right-hand side of \eqref{Eq:energy-DG} are bounded by
\[
 \boldsymbol\gm 2^{(N+sp)n} \frac{k^p}{\rho^{sp}} |A_n|.
\]
Note that there is no $\dl$-dependence in the above bound.
To estimate the last term in \eqref{Eq:energy-DG} observe that, since $\rho_n\in(\rho, 2\rho]$ and $k_n\leqslant k$, we have
\begin{align*}
|\{u(\cdot, 0)<k_n\}\cap B_n| &\leqslant |\{u(\cdot, 0)<k\}\cap B_{2\rho}|\\
&\leqslant \nu_o |B_{2\rho}| \leqslant 2^N\nu_o |B_n| =\frac{2^N\nu_o}{\dl\theta\rho^{sp}}|Q_n|,
\end{align*}
where $\nu_o$ is still to be selected.
Then, it is estimated by
\[
\int_{B_n}  w_{-}^2(x,0)\,\dx \leqslant k^2 |\{u(\cdot, 0)<k_n\}\cap B_n|\leqslant 2^N\frac{ k^{p}}{\dl \rho^{sp}}\nu_o|Q_n|.
\]
Collecting these estimates on the right-hand side of \eqref{Eq:energy-DG}, we arrive at
\begin{align}\nonumber\label{Eq:DG-energy:1}
\essup_{t\in[0,\dl\theta\rho^{sp}]}&\int_{\widetilde{B}_n} w^2_{-}(x,t)\,\dx+ \iiint_{\widetilde{\mathbb{Q}}_n}   \frac{|w_-(x,t) - w_-(y,t)|^p}{|x-y|^{N+sp}}\,\dx\dy\dt\\
&\leqslant \boldsymbol\gm 2^{(N+2p)n}\frac{k^p}{ \rho^{sp}}|A_n|   +2^N\frac{k^{p}}{\dl \rho^{sp}}\nu_o|Q_n|.
\end{align}

To proceed, we consider {\it two cases} by comparing the two terms on the rigt-hand side. In the {\it first case}, suppose
\begin{equation}\label{Eq:DG-alt:1}
|A_n|=|\{u <k_n\}\cap Q_n|\leqslant \frac{\nu_o}{\dl}|Q_n|\quad\text{for some}\> \> n\in  \nn_0.
\end{equation}
Noticing that $\rho_n\in(\rho, 2\rho]$ and $k_n\geqslant\frac12 k$, 
then \eqref{Eq:DG-alt:1} yields that
\begin{align}\label{Eq:meas-alt:1}\nonumber
%&\big|\big\{u <\tfrac14 k\big\}\cap (0,\dl\rho^{2s})+Q_{\rho}(\dl)\big|\\
 |\{u<\tfrac12 k\}\cap (0,\dl\theta\rho^{sp})+Q_{\rho}(\dl)| &\leqslant|\{u<k_n\}\cap Q_n| \\
&\leqslant\frac{\nu_o}{\dl}|Q_n|\leqslant 2^N\frac{\nu_o}{\dl} |Q_{\rho}(\dl\theta)|.
\end{align}
Recall our notation $(0,\dl\theta\rho^{sp})+Q_{\rho}(\dl\theta)=B_{\rho}\times(0,\dl\theta\rho^{sp}]$ in \eqref{Eq:meas-alt:1}.
Assuming that $\dl$ has been fixed for the moment, let us choose $\nu_o$ to be so small that 
\begin{equation}\label{Eq:nu:0}
2^N\frac{\nu_o}{\dl}\leqslant \nu\quad\implies\quad\nu_o\leqslant 2^{-(N+q_1)}\widetilde{\boldsymbol\gm}^{-1} \dl^{q_2+1}
\end{equation}
where we plugged in $\nu=(1-a)^{q_1}\widetilde{\boldsymbol\gm}^{-1}\dl^{q_2}$ from \eqref{Eq:nu-dep} with $a=\frac12$, while $\widetilde{\boldsymbol\gm}$, $q_1$ and $q_2$ depend only on the $\mathfrak{data}$, as determined in Lemma~\ref{Lm:DG:1}. Then   \eqref{Eq:meas-alt:1} and \eqref{Eq:nu:0} allow us to apply Lemma~\ref{Lm:DG:1} and conclude that
\begin{equation}\label{Eq:DG:pt-est:1}
u\geqslant\tfrac1{8}k\quad\text{ a.e. in }(0,\dl\theta\rho^{sp})+Q_{\frac12\rho}(\dl\theta),
\end{equation}
provided that $\ell(T_2)\leqslant\frac18k$.
This step does not require any quantitative knowledge of $\dl$, which is still to be specified.

In the {\it second case}, suppose
\begin{equation}\label{Eq:DG-alt:2}
|A_n|=|\{u <k_n\}\cap Q_n|> \frac{\nu_o}{\dl}|Q_n|\quad\text{for any}\>\> n\in  \nn_0.
\end{equation}
Then, the energy estimate \eqref{Eq:DG-energy:1} gives that for any $n\in  \nn_0$,
\begin{equation*}
\begin{aligned}
	\essup_{t\in[0,\dl\theta\rho^{sp}]}&\int_{\widetilde{B}_n} w^2_{-}(x,t)\,\dx+ \iiint_{\widetilde{\mathbb{Q}}_n}  \frac{|w_-(x,t) - w_-(y,t)|^p}{|x-y|^{N+sp}}\,\dx\dy\dt\\
	&\quad\leqslant
	 \boldsymbol\gm 2^{(N+2p)n}\frac{k^p}{ \rho^{sp}}|A_n| .
\end{aligned}
\end{equation*}
Departing from the above energy estimate, one can run DeGiorgi's iteration exactly as in \cite[Lemma~3.2]{Liao-cvpd-24} and \cite[Lemma~3.2]{Liao-mod-24}. As a result, we find $\dl$ depending only on the $\mathfrak{data}$, such that
\begin{equation}\label{Eq:DG:pt-est:2}
u \geqslant\tfrac14k \quad
	\mbox{a.e.~in $ B_{\frac12\rho} \times(0,\dl\theta\rho^{sp}]$,}
\end{equation}
provided that $\ell(T_2)\leqslant\frac14k$.
%This means that, if we impose \eqref{Eq:l:1}, then
%\begin{equation}\label{Eq:DG:pt-est:2}
%u \geqslant\tfrac14 k\quad
%	\mbox{a.e.~in $ B_{\frac12\rho} \times\big(0,\dl \rho^{2s}\big]$.}
%\end{equation}
Once $\dl$ is chosen, this also fixes $\nu_o$ from \eqref{Eq:nu:0} in terms of the $\mathfrak{data}$. Therefore, no matter which case of \eqref{Eq:DG-alt:1} and \eqref{Eq:DG-alt:2} occurs, one always obtains the desired pointwise estimate from \eqref{Eq:DG:pt-est:1} and \eqref{Eq:DG:pt-est:2}. 
\end{proof}

The next two results can be viewed as refinements of Lemma~\ref{Lm:meas-shrink:1}.  Comparing with Lemma~\ref{Lm:meas-shrink:1}, the following result encodes the measure theoretical information in a local integral; it generalizes \cite[Lemma~3.5]{Liao-DD-24} to an intrinsic version.

\begin{lemma}\label{Lm:meas-shrink:2}
 Let $u$ be a   local weak super-solution to \eqref{Eq:1:1}--\eqref{Eq:K} that is nonnegative in $\mathcal{Q}$. Let $c,\,k>0$ be  parameters.
There exists
 $\boldsymbol \gm>1$ depending only on the $\mathfrak{data}$, such that
   either 
  $$ 
{\rm Tail} [ u_-; \mathcal{Q} ]
> ck
  $$ 
  or
\begin{equation*}
	\essinf_{t\in[t_o-ck^{2-p}\rho^{2s}, t_o]}|\{
	u(\cdot, t)\leqslant  k \}\cap  B_{\rho}(x_o)|
	\leqslant \frac{ \boldsymbol\gm\max\{1,c^{-1}\} k^{p-1} }{[u^{p-1}]_{(x_o,t_o)+Q_\rho(ck^{2-p})} } |B_{\rho}|,
\end{equation*}
where the denominator on the right-hand side is the integral average of $u^{p-1}$ on $(x_o,t_o)+Q_{\rho}(ck^{2-p})$,
provided that $(x_o,t_o)+Q_{2\rho}(ck^{2-p})\subset \mathcal{Q}$.
\end{lemma}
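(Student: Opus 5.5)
The plan is to read the estimate directly off the ``good'' nonlocal term in the energy estimate of Proposition~\ref{Prop:2:1}, in the spirit of \cite[Lemma~3.5]{Liao-DD-24}. After translating to $(x_o,t_o)=(0,0)$, set $S:=ck^{2-p}\rho^{sp}$ and use the super-solution version (lower sign) on $B_{2\rho}\times(-S,0]$. I would take the constant level $k(t)\equiv 2k$, so that $w_\mp=(u-2k)_\mp$ and the $k'$ term vanishes. The cutoff $\z=\z(x)$ is independent of time, with $\z=1$ on $B_\rho$, support in $B_{\frac32\rho}$, and $|\nabla\z|\leqslant 4/\rho$; hence the $\pl_t\z^p$ term vanishes as well. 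I would \emph{not} use the time-dependent level $\ell(t)$ as in Lemma~\ref{Lm:DG:2}. The reason is that the alternative in the statement is ${\rm Tail}[u_-;\mathcal{Q}]>ck$ without any large constant, and a level shift of size $\widetilde{\boldsymbol\gm}\,{\rm Tail}$ could exceed $k$ when $c$ is large. Instead, the tail will be estimated directly and then absorbed, assuming ${\rm Tail}[u_-;\mathcal{Q}]\leqslant ck$.

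First, the right-hand side. Since $u\geqslant0$ in $\mathcal{Q}$, we have $w_-\leqslant 2k$ on $B_R$, and the following bounds hold.
\begin{itemize}
\item The $\z$-gradient term is at most $\boldsymbol\gm k^p\rho^{-sp}|B_{2\rho}|S=\boldsymbol\gm c\,k^2\rho^N$.
\item The initial term is $\int\z^p w_-^2(x,-S)\,\dx\leqslant 4k^2|B_{2\rho}|$.
\item In the tail term, split $\rn\setminus B_{2\rho}$ into $B_R\setminus B_{2\rho}$ and $\rn\setminus B_R$. On the first piece $w_-\leqslant 2k$. On the second, $w_-\leqslant u_-+2k$ and $|x-y|\geqslant\frac14|y|$ for $x\in B_{\frac32\rho}$. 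Hence for $x\in\operatorname{spt}\z$,
\[
\int_{\rn\setminus B_{2\rho}}\frac{w_-^{p-1}(y,t)}{|x-y|^{N+sp}}\,\dy\leqslant\boldsymbol\gm\frac{k^{p-1}}{\rho^{sp}}+\boldsymbol\gm\int_{\rn\setminus B_R}\frac{u_-^{p-1}(y,t)}{|y|^{N+sp}}\,\dy .
\]
Integrating against $\z^pw_-\leqslant 2k$ over $B_{2\rho}\times(-S,0]$ bounds this term by $\boldsymbol\gm k\rho^N\bigl(ck+{\rm Tail}[u_-;\mathcal{Q}]\bigr)\leqslant\boldsymbol\gm c\,k^2\rho^N$.
\end{itemize}
Altogether the right-hand side is at most $\boldsymbol\gm(1+c)k^2\rho^N$.

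Next, the left-hand side, keeping only the term $\iint\z^pw_-(x,t)\int_{\rn}w_+^{p-1}(y,t)|x-y|^{-N-sp}\,\dy\,\dx\dt$. Restrict both $x$ and $y$ to $B_\rho$, where $|x-y|\leqslant2\rho$. On $\{u(\cdot,t)\leqslant k\}$ we have $w_-\geqslant k$, and pointwise $(u-2k)_+^{p-1}\geqslant 2^{2-p}u^{p-1}-\boldsymbol\gm k^{p-1}$. Writing $m:=\essinf_t|\{u(\cdot,t)\leqslant k\}\cap B_\rho|$, the term is at least
\[
\boldsymbol\gm^{-1}\rho^{-N-sp}\,k\,m\Bigl(\iint_{Q_\rho(ck^{2-p})}u^{p-1}\,\dx\dt-\boldsymbol\gm k^{p-1}|B_\rho|S\Bigr)=\boldsymbol\gm^{-1}c\,k^{3-p}m\bigl([u^{p-1}]-\boldsymbol\gm_1k^{p-1}\bigr),
\]
where $[u^{p-1}]$ is the average over $Q_\rho(ck^{2-p})$. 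Comparing the two sides and using $(1+c)/c\leqslant2\max\{1,c^{-1}\}$ gives
\[
m\leqslant\boldsymbol\gm\max\{1,c^{-1}\}\frac{k^{p-1}}{[u^{p-1}]-\boldsymbol\gm_1k^{p-1}}|B_\rho| .
\]
If $[u^{p-1}]\geqslant2\boldsymbol\gm_1k^{p-1}$, this is the claim. Otherwise the claimed right-hand side is at least $|B_\rho|$ once $\boldsymbol\gm\geqslant2\boldsymbol\gm_1$, and the claim holds trivially.

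The main obstacle is the tail bookkeeping. One has to check carefully that the tail contribution produces exactly the factor $c$, matching the length $S$ of the time interval, rather than some other power of $k$. This is what lets the alternative be stated as ${\rm Tail}>ck$ with no constant, and it is what produces the factor $\max\{1,c^{-1}\}$, which comes from the initial term not carrying the factor $c$. A secondary point is the choice of level $2k$ rather than $k$: it guarantees the quantitative lower bound $w_-\geqslant k$ on $\{u\leqslant k\}$, at the harmless cost of the $-\boldsymbol\gm_1k^{p-1}$ correction handled above.
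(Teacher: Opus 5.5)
Your proposal is correct and is essentially the paper's argument: the super-solution energy estimate on $B_{2\rho}\times(-ck^{2-p}\rho^{sp},0]$ with a constant truncation level and a time-independent cutoff, the right-hand side bounded by $\boldsymbol\gm\max\{1,c\}k^2|B_\rho|$ once ${\rm Tail}[u_-;\mathcal{Q}]\leqslant ck$, and the good nonlocal term bounded below by restricting both variables to $B_\rho$. Your two deviations are small improvements, not a different route. Truncating at $2k$ gives $w_-\geqslant k$ on $\{u\leqslant k\}$ directly, whereas the paper lands on $\{u\leqslant\frac12k\}$ and rescales ``upon adjusting constants''. Your bound $(u-2k)_+^{p-1}\geqslant 2^{2-p}u^{p-1}-(2k)^{p-1}$ is the right one for $p>2$; the paper's literal $w_+^{p-1}\geqslant u^{p-1}-k^{p-1}$ fails there, though harmlessly, since the factor $2^{2-p}$ is absorbed into the generic constant in the next line.
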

%%%
\begin{proof}
Assume $(x_o,t_o)=(0,0)$ up to a translation and still use $\mathcal{Q}$ to denote the shifted reference cylinder. 
Let us  employ the energy estimate  of Proposition~\ref{Prop:2:1} in $ B_{2\rho}\times (-ck^{2-p}\rho^{sp}, 0]$ 
with the truncation
	$w_- =(u-k)_-$.
Introduce
a  cutoff function $\z\in C^1_0(B_{3\varrho/2},[0,1])$ such that $\z=1$ in $B_{\varrho}$ and $|\nabla\z|\leqslant4/\varrho$.
Then, we obtain  that
\begin{align*}
	&\iint_{Q_\rho(ck^{2-p})} w_{-}(y,t) \,\dy\dt \bigg[\int_{ \rn }  \frac{w^{p-1}_+(x,t)}{|x-y|^{N+sp}}\,\dx\bigg]\\
    &\leqslant\boldsymbol\gm\iiint_{\mathbb{Q}(2\rho,ck^{2-p}\rho^{sp})}\max\{w^p_{-}(x,t),\, w^p_{-}(y,t)\} \frac{|\z(x) - \z(y)|^p}{|x-y|^{N+sp}}\,\dx\dy\dt\\
	&\qquad+\boldsymbol\gm\iint_{Q(2\rho,ck^{2-p}\rho^{sp})} \z^p w_{-}(x,t)\bigg[\essup_{x\in B_{\frac32\rho}}\int_{\rn\setminus B_{2\rho}}\frac{ w_{-}^{p-1}(y,t)}{|x-y|^{N+sp}}\,\dy\bigg]\dx\dt \\
    &\qquad+\int_{B_{2\varrho} } w_-^2(x, -ck^{2-p}\rho^{sp}) \,\dx.
	% & \leqslant \int_{B_{2\varrho} } w_-^2(x, -ck^{2-p}\rho^{sp}) \,\dx +
	% \frac{\boldsymbol\gm}{\rho^{2s}} \int_{-\rho^{2s}}^{0}\int_{B_{2\rho}} w_{-}^2(x,t) \,\dx \dt\\
	% &\quad+\boldsymbol\gm\int_{-\rho^{2s}}^0\int_{B_{2\rho}}  w_{-}(x,t)\,\dx\dt \bigg(  \int_{\rn\setminus B_{\frac32\rho}}\frac{ w_{-}(y,t)}{|y|^{N+2s}}\,\dy\bigg).
\end{align*}
The various terms on the right-hand side are dealt with similarly to \cite[Lemma~3.4]{Liao-cvpd-24} and \cite[Lemma~3.4]{Liao-mod-24}.
Namely, the first term  is estimated by
\begin{align*}
\boldsymbol{\gm}\frac{k^p}{\rho^{sp}}|Q_\rho(ck^{2-p})|=\boldsymbol\gm ck^2 |B_\rho|.
\end{align*}
Note that putting $\rho$ or $2\rho$ in the above display does not make essential difference.
As for the second term, observe that for $|x|\leqslant \frac{3}{2}\rho$ and $|y|\geqslant2\rho$, we have $|x-y|\geqslant\frac14|y|$; hence, we estimate it by
\begin{align*}
4^{N+sp}&\iint_{Q(2\rho,ck^{2-p}\rho^{sp})}  w_{-}(x,t)\,\dx\dt \bigg[ \int_{\rn\setminus B_{\frac32\rho}}\frac{ w^{p-1}_{-}(y,t)}{|y|^{N+sp}}\,\dy\bigg]\\
&\leqslant 4^{N+sp} k |B_{2\rho} | \bigg[\int_{-ck^{2-p} \rho^{sp}}^0 \int_{\rn\setminus B_{\frac32\rho}}\frac{ w^{p-1}_{-}(y,t)}{|y|^{N+sp}}\,\dy\dt\bigg]\\
&\leqslant 4^{N+sp} k |B_{2\rho}| \bigg[\boldsymbol\gm  ck +\int_{T_1}^{T_2} \int_{\rn\setminus B_{R}}\frac{ u^{p-1}_{-}(y,t)}{|y|^{N+sp}}\,\dy\dt\bigg]\\
&\leqslant \boldsymbol\gm ck^2 |B_{2\rho}|=\boldsymbol\gm 2^N ck^2 |B_\rho|.
\end{align*}
To obtain the last line, we enforced the tail estimate
\[
\int^{T_2}_{T_1}\int_{\rn\setminus B_R}\frac{ u^{p-1}_{-}(y,t)}{|y|^{N+sp}}\,\dy\dt\leqslant c k.
\]
The third term is standard:
\[
\int_{B_{2\varrho} } w_-^2(x,- ck^{2-p}\rho^{sp}) \,\dx\leqslant k^2|B_{2\rho}|=  2^N k^2 |B_\rho|.
\]
Combining the above estimate we see that the energy estimate becomes
\begin{align}\label{Eq:shrink:0}\nonumber
	\iint_{Q_\rho(ck^{2-p})}& w_{-}(y,t) \,\dy\dt \bigg[\int_{ \rn }  \frac{w^{p-1}_+(x,t)}{|x-y|^{N+sp}}\,\dx\bigg]\\
	& \leqslant
\boldsymbol\gm \max\{1,c\} k^2 |B_\rho|
\end{align}
for some $\boldsymbol\gm$ depending only on the $\mathfrak{data}$.
%provided the tail estimate is enforced.

To estimate the left-hand side of \eqref{Eq:shrink:0}, we evaluate the global integral in the bracket only over $B_\rho$ and use that $|x-y|\leqslant 2\rho$ if $x,\,y\in B_\rho$.
As a result, we have that
\begin{align*}
&\iint_{Q_\rho (ck^{2-p})}w_{-}(y,t) \,\dy\dt \bigg[\int_{ B_{\rho}}  \frac{w^{p-1}_+(x,t) }{|x-y|^{N+sp}}\,\dx\bigg]\\
&\geqslant  \iint_{Q_\rho (ck^{2-p})}w_{-}(y,t) \,\dy\dt \bigg[\int_{ B_{\rho}}  \frac{w^{p-1}_+(x,t) }{(2\rho)^{N+sp}}\,\dx\bigg]\\
&\geqslant  \essinf_{t\in[-ck^{2-p}\rho^{sp}, 0]}\int_{B_\rho}w_{-}(y,t) \,\dy \bigg[\iint_{ Q_{\rho}(ck^{2-p})}  \frac{w_+^{p-1}(x,t) }{(2\rho)^{N+sp}}\,\dx\dt \bigg]\\
&\geqslant   \essinf_{t\in[-ck^{2-p}\rho^{sp}, 0]}\int_{B_\rho}w_{-}(y,t) \,\dy \bigg[\iint_{ Q_{\rho}(ck^{2-p})}\frac{u^{p-1}(x,t) - k^{p-1}}{(2\rho)^{N+sp}}\,\dx\dt\bigg]\\
&\geqslant  \essinf_{t\in[-ck^{2-p}\rho^{sp}, 0]} \int_{B_\rho}w_{-}(y,t) \,\dy \bigg[ \frac{ck^{2-p}}{\boldsymbol{\gm}} \biint_{Q_\rho(ck^{2-p})} u^{p-1}\,\dx\dt - \boldsymbol{\gm} ck \bigg]\\
&\geqslant  \frac{ck^{2-p}}{\boldsymbol{\gm}} \essinf_{t\in[-ck^{2-p}\rho^{sp}, 0]}\int_{B_\rho}w_{-}(y,t) \,\dy \cdot [u^{p-1}]_{Q_\rho(ck^{2-p})} - \boldsymbol{\gm} ck^2 |B_{\rho}|,
\end{align*}
 whereas the last integral can be estimated from below by integrating over a smaller set, i.e.,
\[
\essinf_{t\in[-ck^{2-p}\rho^{sp}, 0]}\int_{B_\rho}w_{-}(y,t) \,\dy \geqslant \tfrac12 k \essinf_{t\in[-ck^{2-p}\rho^{sp}, 0]}|\{u(\cdot, t)\leqslant\tfrac12 k\}\cap B_\rho|.
\]
Combining these estimates in \eqref{Eq:shrink:0}, we obtain the desired estimate upon adjusting constants.
\end{proof}
%%%

The last result generalizes \cite[Lemma~3.6]{Liao-DD-24} to an intrinsic version. As discussed in \S~\ref{S:method}, it plays a crucial role in balancing local and nonlocal contributions.
\begin{lemma}\label{Lm:meas-shrink:3}
 Let $u$ be a  local weak super-solution to \eqref{Eq:1:1}--\eqref{Eq:K} that is nonnegative in $\mathcal{Q}$.
 Let $c,\,k>0$ be parameters. 
There exists
 $\boldsymbol \gm>1$ depending only on the $\mathfrak{data}$,  such that
 either 
 $$   
{\rm Tail} [ u_-; \mathcal{Q} ]
>ck
 $$ 
 or
\begin{equation*}
\essinf_{t\in[t_o-ck^{2-p}\rho^{sp}, t_o]}|\{u(\cdot,t)\leqslant  k \}\cap B_\rho(x_o)|\leqslant  \frac{ \boldsymbol\gm \max\{1,c\}k}{{\rm Tail}[u_+; (x_o,t_o)+Q_{\rho}(ck^{2-p})]} |B_\rho|,
\end{equation*}
provided that $(x_o,t_o)+Q_{2\rho}(ck^{2-p})\subset \mathcal{Q}$.
\end{lemma}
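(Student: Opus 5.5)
The proof follows that of Lemma~\ref{Lm:meas-shrink:2}. The only change is that, in the term generated by the interaction of $w_-$ with $w_+$, we evaluate the global integral outside $B_\rho$ rather than inside. After translating so that $(x_o,t_o)=(0,0)$, we apply the energy estimate of Proposition~\ref{Prop:2:1} in $B_{2\rho}\times(-ck^{2-p}\rho^{sp},0]$. We use the truncation $w_-=(u-k)_-$ and a cutoff $\z\in C^1_0(B_{3\rho/2},[0,1])$ with $\z=1$ on $B_\rho$. Exactly as there, we enforce the tail condition $\mathrm{Tail}[u_-;\mathcal{Q}]\leqslant ck$. The three right-hand terms (the cutoff term, the $w_-$-tail term, and the initial term) are then bounded by $\boldsymbol\gm\max\{1,c\}k^2|B_\rho|$, and we arrive at \eqref{Eq:shrink:0}:
\[
\iint_{Q_\rho(ck^{2-p})} w_-(y,t)\bigg[\int_{\rn}\frac{w_+^{p-1}(x,t)}{|x-y|^{N+sp}}\,\dx\bigg]\dy\dt\leqslant \boldsymbol\gm\max\{1,c\}k^2|B_\rho|.
\]

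For the lower bound of the left-hand side, we restrict the inner integral to $x\in\rn\setminus B_\rho$. For $y\in B_\rho$ we may then only use $|x-y|\leqslant|x|+|y|\leqslant 2|x|$, since $|x|\geqslant\rho>|y|$. This gives $|x-y|^{-N-sp}\geqslant 2^{-N-sp}|x|^{-N-sp}$. Next we use $w_+^{p-1}\geqslant u_+^{p-1}-k^{p-1}$, which holds because $(a-k)_+^{p-1}\geqslant a_+^{p-1}-k^{p-1}$ for $p>1$. This yields
\[
\int_{\rn\setminus B_\rho}\frac{w_+^{p-1}(x,t)}{|x-y|^{N+sp}}\,\dx\geqslant 2^{-N-sp}\int_{\rn\setminus B_\rho}\frac{u_+^{p-1}(x,t)}{|x|^{N+sp}}\,\dx-\boldsymbol\gm\frac{k^{p-1}}{\rho^{sp}} .
\]
Then we bound $\iint w_-(y,t)F(t)\,\dy\dt$, with $F(t)$ the bracket, from below by
\[
\essinf_t\int_{B_\rho}w_-(y,t)\,\dy\cdot\int_{-ck^{2-p}\rho^{sp}}^0 F(t)\,\dt .
\]
This splitting requires $F\geqslant0$. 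The integral over $\rn$ is indeed nonnegative, so we first pass to the full integral over $\rn\setminus B_\rho$ of the nonnegative integrand. Only afterwards do we subtract the $k^{p-1}$ correction, and that correction is estimated using $w_-\leqslant k$ pointwise. The time integral of the $u_+$-part is exactly $2^{-N-sp}\mathrm{Tail}[u_+;Q_\rho(ck^{2-p})]$. The correction contributes at most
\[
k|B_\rho|\cdot ck^{2-p}\rho^{sp}\cdot\boldsymbol\gm k^{p-1}\rho^{-sp}=\boldsymbol\gm ck^2|B_\rho| .
\]

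Combining these, we obtain
\[
\essinf_t\int_{B_\rho}w_-(y,t)\,\dy\cdot\mathrm{Tail}[u_+;Q_\rho(ck^{2-p})]\leqslant\boldsymbol\gm\max\{1,c\}k^2|B_\rho| .
\]
We then use $\int_{B_\rho}w_-(\cdot,t)\geqslant\tfrac12k|\{u(\cdot,t)\leqslant\tfrac12k\}\cap B_\rho|$ and divide by $\tfrac12 k\,\mathrm{Tail}[u_+]$. This gives the claim at level $\tfrac12k$. To reach level $k$, we run the argument with $2k$ in place of $k$; the resulting changes in the height $c(2k)^{2-p}$ and in the tail threshold are absorbed by adjusting $c$ by a factor $2^{p-2}$ and adjusting constants.

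The main point of care is the estimate of the outer interaction term. We must use only the distance comparison $|x-y|\leqslant2|x|$, which is valid without any gap between $B_\rho$ and the exterior. We must also handle the $-k^{p-1}$ correction so that it does not spoil the essinf splitting, which is why the subtraction is done only after passing to the nonnegative full integral.
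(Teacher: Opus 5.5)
Your argument follows the paper's proof essentially step for step. You start from \eqref{Eq:shrink:0} under ${\rm Tail}[u_-;\mathcal{Q}]\leqslant ck$, restrict the inner integral to $\rn\setminus B_\rho$, use $|x-y|\leqslant 2|x|$, pull out $\essinf_t\int_{B_\rho}w_-$ against the nonnegative bracket, and subtract the $k^{p-1}$ correction at a cost of $\boldsymbol\gm ck^2|B_\rho|$.

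One step is justified incorrectly. The inequality $(a-k)_+^{p-1}\geqslant a_+^{p-1}-k^{p-1}$ holds for $1<p\leqslant 2$, by subadditivity of $t\mapsto t^{p-1}$. It is false for $p>2$, which is the case where the lemma is used. For example, with $p=3$ and $a=2k$ it reads $k^2\geqslant 3k^2$. For $p\geqslant 2$, use convexity instead: $a^{p-1}\leqslant 2^{p-2}[(a-k)^{p-1}+k^{p-1}]$ for $a\geqslant k$. Hence $(a-k)_+^{p-1}\geqslant 2^{2-p}a_+^{p-1}-k^{p-1}$ for every real $a$, since for $a<k$ the right-hand side is nonpositive. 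This only puts a factor $2^{2-p}$ in front of ${\rm Tail}[u_+;Q_\rho(ck^{2-p})]$, which is absorbed into $\boldsymbol\gm$, and the rest of your argument goes through unchanged. The paper's displayed chain uses the same unqualified inequality and needs the same repair.

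Your final step is correct. You rerun the argument with $(2k,\,2^{p-2}c)$ in place of $(k,c)$. The cylinder $Q_\rho(ck^{2-p})$ and the time range of the $\essinf$ are then unchanged. The hypothesis ${\rm Tail}[u_-;\mathcal{Q}]\leqslant ck$ implies the new tail requirement ${\rm Tail}[u_-;\mathcal{Q}]\leqslant 2^{p-1}ck$. The constant changes only by a factor depending on $p$. This makes explicit the passage from level $\tfrac12 k$ to level $k$, which the paper leaves to ``adjusting constants.''
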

%%%%%%
\begin{proof}
Assume $(x_o,t_o)=(0,0)$. The argument departs from \eqref{Eq:shrink:0}.
To estimate the left-hand side, we integrate instead over $\rn\setminus B_{\rho}$ and use the fact that when $|y|\leqslant \rho$ and $|x|\geqslant \rho$, one has $|x-y|\leqslant 2 |x|$. As a result, we have that
\begin{align*}
&\iint_{Q_\rho (ck^{2-p})}w_{-}(y,t) \,\dy\dt \bigg[\int_{\rn\setminus B_{\rho}}  \frac{w^{p-1}_+(x,t) }{|x-y|^{N+sp}}\,\dx\bigg]\\
&\geqslant  \iint_{Q_\rho (ck^{2-p})}w_{-}(y,t) \,\dy\dt \bigg[\int_{\rn\setminus B_{\rho}}  \frac{w^{p-1}_+(x,t) }{|2x|^{N+sp}}\,\dx\bigg]\\
&\geqslant  \essinf_{t\in[-ck^{2-p}\rho^{sp}, 0]}\int_{B_\rho}w_{-}(y,t) \,\dy \bigg[\int_{-ck^{2-p}\rho^{sp}}^0\int_{\rn\setminus B_{\rho}}  \frac{w^{p-1}_+(x,t) }{|2x|^{N+sp}}\,\dx\dt \bigg]\\
&\geqslant   \essinf_{t\in[-ck^{2-p}\rho^{sp}, 0]}\int_{B_\rho}w_{-}(y,t) \,\dy \bigg[\int_{-ck^{2-p}\rho^{sp}}^0\int_{\rn\setminus B_{\rho}}\frac{u^{p-1}_+(x,t) - k^{p-1}}{|2x|^{N+sp}}\,\dx\dt\bigg]\\
&\geqslant  \essinf_{t\in[-ck^{2-p}\rho^{sp}, 0]} \int_{B_\rho}w_{-}(y,t) \,\dy \bigg[\frac{1}{\boldsymbol{\gm}}{\rm Tail}[u_+, Q_{\rho}(ck^{2-p})] - \boldsymbol\gm ck \bigg]\\
&\geqslant  \frac{1}{\boldsymbol{\gm}} \essinf_{t\in[-ck^{2-p}\rho^{sp}, 0]}\int_{B_\rho}w_{-}(y,t) \,\dy \cdot {\rm Tail}[u_+, Q_{\rho}(ck^{2-p})] - \boldsymbol\gm c k^2 |B_{\rho}|.
\end{align*}
The proof is then concluded just like in Lemma~\ref{Lm:meas-shrink:1}.
\end{proof}

%%%%%%%%%%%%%%

\section{Expansion of positivity}\label{S:exp-pos}
In this section, we establish two results regarding expansion of positivity. The first result features a from-measure-to-pointwise estimate.
 We will continue to use the reference cylinder $\mathcal{Q}:=B_R(x_o)\times(T_1,T_2]\subset E_T$.
\begin{proposition}\label{Prop:expansion}
Let $u$ be a   local  weak super-solution to \eqref{Eq:1:1}--\eqref{Eq:K} that is nonnegative in  $\mathcal{Q}$, with $p>2$. 
Suppose for some $\al\in(0,1]$ and  $k>0$, we have
	\begin{equation*}
		|\{ u(\cdot, t_o) \geqslant k \}\cap B_{\varrho}(x_o) |
		\geqslant
		\al |B_\varrho |.
		%\quad
		%\mbox{ for all $t_o<t< t_o+\dl\boldsymbol\om^{q+1-p}\varrho^p$}.
	\end{equation*}
There exist constants $\dl\in(0,1)$ depending  on the $\mathfrak{data}$ and $\eta$ depending additionally on $\al$,  such that 
either
\begin{equation*}
{\rm Tail}[u_-; \mathcal{Q}]
\geqslant \eta k
\end{equation*}
 or
\begin{equation*}
	u\geqslant\eta  k
	\quad
	\mbox{a.e.~in $ B_{2\varrho}(x_o) \times( t_o+\tfrac34  \dl (\eta k)^{2-p}(4\varrho)^{sp},
	t_o+\dl (\eta k)^{2-p}(4\varrho)^{sp}],$}
\end{equation*}
with $\eta=\al^{1+(N+2p)(p-1)}/\boldsymbol{\gm}$, provided  
\[
B_{4\rho}(x_o)\times(t_o, t_o+ \dl(\eta k)^{2-p}(4\varrho)^{sp}]\subset \mathcal{Q}.
\]
%Moreover, $\eta=\al^{N+2p+1}/\boldsymbol{\gm}$ for some $\boldsymbol{\gm}>1$ depending on the data $\{s, p, N, C_o, C_1\}$.
\end{proposition}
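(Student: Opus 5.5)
Assume $(x_o,t_o)=(0,0)$. The plan avoids the exponential time shift of DiBenedetto--Gianazza--Vespri. Instead, the measure information on $B_\rho$ is converted into a lower bound on the \emph{nonlocal} attraction felt by points of $B_{4\rho}$. Lemma~\ref{Lm:DG:2} then produces the pointwise bound.

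\textbf{Step 1 (propagate the measure information).} Apply Lemma~\ref{Lm:meas-propogation} with level $k$ and fraction $\al$. Unless ${\rm Tail}[u_-;\mathcal Q]>\dl_1 k$, this gives
\[
|\{u(\cdot,t)\geqslant \tfrac14\al k\}\cap B_\rho|\geqslant \tfrac12\al|B_\rho| \quad\text{for all } t\in(0,\dl_1k^{2-p}\rho^{sp}],
\]
with $\dl_1=\al^{p+N+1}/\boldsymbol\gm$.

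\textbf{Step 2 (localise to a thin time slab).} Work on the short interval $I=(0,\dl(\eta k)^{2-p}(4\rho)^{sp}]$. Since $p>2$, one can arrange $\dl(\eta k)^{2-p}(4\rho)^{sp}\leqslant \dl_1k^{2-p}\rho^{sp}$ by choosing $\dl$ small relative to $\eta^{p-2}$. Two points to watch:
\begin{itemize}
\item Stated literally this is backwards, because $\eta^{2-p}$ is large. So the actual argument should first use Step 1 only on the time window it covers, and then use the intrinsic level $\eta k$ in what follows.
\item The key observation is that for any $x\in B_{4\rho}$, the set $\{u\geqslant\frac14\al k\}\cap B_\rho$ lies at distance at most $5\rho$.
\end{itemize}

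\textbf{Step 3 (nonlocal positivity).} On $B_{4\rho}\times I$, consider the truncation $w_-=(u-\eta k)_-$ in the energy estimate of Proposition~\ref{Prop:2:1}. The second term on its left-hand side,
\[
\iint \z^p w_-(x,t)\int_{\rn}\frac{w_+^{p-1}(y,t)}{|x-y|^{N+sp}}\,\dy\,\dx\dt,
\]
is bounded below by $\boldsymbol\gm^{-1}\al(\al k)^{p-1}\rho^{-sp}\iint\z^p w_-$, since $w_+\geqslant \frac18\al k$ on the good set of Step 1. This is exactly the mechanism of Lemma~\ref{Lm:meas-shrink:3}: the good set acts as a positive tail for the ball $B_{4\rho}$. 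Arguing as in Lemma~\ref{Lm:meas-shrink:3} over the intrinsic cylinder $B_{4\rho}\times I$ then yields
\[
\essinf_{t\in I}|\{u(\cdot,t)\leqslant \eta k\}\cap B_{4\rho}|\leqslant \boldsymbol\gm\frac{\eta k}{\al^{p}k^{p-1}(\eta k)^{2-p}}|B_{4\rho}|\approx \boldsymbol\gm\frac{\eta^{p-1}}{\al^{p}}|B_{4\rho}|.
\]
Choosing $\eta=\al^{1+(N+2p)(p-1)}/\boldsymbol\gm$ makes the right side at most $\nu_o|B_{4\rho}|$, where $\nu_o$ is from Lemma~\ref{Lm:DG:2}. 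Hence there is an instant $t_*\in[0,\frac14\dl(\eta k)^{2-p}(4\rho)^{sp}]$ at which the smallness hypothesis of Lemma~\ref{Lm:DG:2} holds on $B_{4\rho}$ with level $\eta k$. To get $t_*$ in the first quarter of $I$, run the essinf on that subinterval.

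\textbf{Step 4 (pointwise bound).} Apply Lemma~\ref{Lm:DG:2} from $t_*$ with $k\to\eta k$ and $\rho\to4\rho$. This gives $u\geqslant\frac18\eta k$ on $B_{2\rho}$ over a later time window of length comparable to $\dl(\eta k)^{2-p}(4\rho)^{sp}$. Renaming $\eta$ and adjusting $\dl$ recovers the stated window $(\frac34,1]$, using the freedom to pick $t_*$ early. Every tail alternative is absorbed into ${\rm Tail}[u_-;\mathcal Q]\geqslant\eta k$, since all thresholds are at least $\eta k/\boldsymbol\gm$.

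\textbf{Main obstacle.} The hard part is Step 3, specifically reconciling the time scales. The good set is guaranteed only for times up to $\dl_1k^{2-p}\rho^{sp}$, while the target window has length $(\eta k)^{2-p}\rho^{sp}$, which is much longer. So the nonlocal lower bound must be extracted on the short window and transported forward. My first attempt would apply Step 3 on the short window $(0,\dl_1k^{2-p}\rho^{sp}]$ with parameter $c=\dl_1\eta^{p-2}$, so that $ck'^{2-p}$ matches at level $k'=\eta k$; here the factor $\max\{1,c^{-1}\}$ must be tracked. Then I would iterate Lemma~\ref{Lm:DG:2}/Lemma~\ref{Lm:DG:initial:1}, which preserves the level $\eta k$ over times of order $(\eta k)^{2-p}\rho^{sp}$. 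The exponent bookkeeping in $\eta$ is where I expect the most friction.
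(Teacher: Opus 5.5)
Your Step~3, as written over the long window $I$, does not go through, for exactly the reason you flag at the end. The lower bound on the good term $\iint\z^p w_-(x,t)\int_{\rn}w_+^{p-1}(y,t)|x-y|^{-N-sp}\,\dy\dx\dt$ needs the good set $G_t:=\{u(\cdot,t)\geqslant\frac14\al k\}\cap B_\rho$, with $|G_t|\geqslant\frac12\al|B_\rho|$, at every $t$ of the window. Step~1 provides this only on $J:=(0,\dl_1k^{2-p}\rho^{sp}]$. Since $|I|/|J|=4^{sp}\dl\,\eta^{2-p}/\dl_1$ is huge for small $\eta$, even the first quarter of $I$ is far longer than $J$. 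So neither the displayed bound $\boldsymbol\gm\,\eta^{p-1}\al^{-p}$ nor ``running the essinf on the first quarter of $I$'' is justified. The escape in Step~2 (taking $\dl\lesssim\dl_1\eta^{p-2}$) is not admissible either, because $\dl$ must depend only on the $\mathfrak{data}$, and in your Step~4 it is inherited from Lemma~\ref{Lm:DG:2}. Note also that $G_t$ lies inside $B_{4\rho}$, so this is the mechanism of Lemma~\ref{Lm:meas-shrink:2}, not a tail as in Lemma~\ref{Lm:meas-shrink:3}.

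The repair you sketch under \emph{Main obstacle} is the right one, and once carried out it is essentially the paper's proof. However, the bookkeeping is not what your Steps~3--4 say. Run the energy estimate of Proposition~\ref{Prop:2:1} for $(u-\eta k)_-$ on the short window $J$, with a time-independent cutoff equal to $1$ on $B_{4\rho}$.
The cutoff and tail terms are $\lesssim\eta^{p}\dl_1k^2|B_{4\rho}|+\eta k\,|B_{4\rho}|\,{\rm Tail}[u_-;\mathcal Q]$, and the initial slice contributes $(\eta k)^2|B_{8\rho}|$. So the right-hand side is $\lesssim(\eta k)^2|B_{4\rho}|$ once ${\rm Tail}[u_-;\mathcal Q]\leqslant\eta k$.
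For $\eta\leqslant\frac18\al$, the good term is at least $\boldsymbol\gm^{-1}\al^pk^{p-1}\rho^{-sp}|J|\,\eta k\,\essinf_{t\in J}|\{u(\cdot,t)\leqslant\frac12\eta k\}\cap B_{4\rho}|$. Hence
\[
\essinf_{t\in J}\big|\{u(\cdot,t)\leqslant\tfrac12\eta k\}\cap B_{4\rho}\big|\leqslant\boldsymbol\gm\frac{\eta}{\dl_1\al^{p}}|B_{4\rho}|=\boldsymbol\gm\frac{\eta}{\al^{N+2p+1}}|B_{4\rho}|.
\]
This is linear in $\eta$: the short time costs a factor $\dl_1^{-1}$, not $\eta^{2-p}$. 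The stated $\eta$ makes it at most $\nu_o|B_{4\rho}|$, because $\eta\al^{-(N+2p+1)}=\al^{(N+2p)(p-2)}/\boldsymbol\gm$.

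No iteration is then needed. A single application of Lemma~\ref{Lm:DG:2} at a suitable $t_*\in J$, at level $\frac12\eta k$ and radius $4\rho$, gives the fixed window $(\frac34,1]$ as soon as $|J|\leqslant\frac14\dl(\frac12\eta k)^{2-p}(4\rho)^{sp}$. That holds if $\frac12\eta\leqslant(\frac14\dl)^{1/(p-2)}$, and this is where $p>2$ is used.

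One caution: do not quote Lemma~\ref{Lm:meas-shrink:2} verbatim with $c\approx\dl_1\eta^{p-2}$. Its stated threshold $c\,\eta k$ is far below $\eta k$, which would break your claim that every tail threshold is at least $\eta k/\boldsymbol\gm$. Rerunning the estimate as above shows that ${\rm Tail}\lesssim\eta k$ suffices, because the initial-slice term is already of size $(\eta k)^2|B_{4\rho}|$.

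For comparison, the paper first rewrites the density on $B_{4\rho}$ (replacing $\al$ by $4^{-N}\al$) and applies Lemma~\ref{Lm:meas-propogation} there. It then applies the measure-shrinking Lemma~\ref{Lm:meas-shrink:1} at level $\sig_1\varep k$ on exactly the same short cylinder, tuning that lemma's parameters so the intrinsic cylinders coincide. This yields a space--time bound $\boldsymbol\gm_*\sig_1\al^{-(N+2p)}$ for the sublevel set, from which $t_*$ is extracted with $\sig_1=\lm\al^{N+2p}$. Lemma~\ref{Lm:DG:2} and the same quarter-window trick, $\sig_1\leqslant(\frac14\dl)^{1/(p-2)}$, finish the argument. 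Your corrected route replaces the shrinking lemma by a slice-wise good-term estimate in which the good set in $B_\rho$ feeds $B_{4\rho}$ nonlocally. Both extract the smallness from the same good term on the same short window, so the difference is cosmetic: yours avoids the parameter matching, while the paper's reuses a packaged lemma.
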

%%%
\begin{proof}
%It suffices to deal with super-solutions and to  and $\boldsymbol\mu^{-}=0$ for simplicity.
 Assume $(x_o,t_o)=(0,0)$ up to a translation and still denote the shifted reference cylinder by $\mathcal{Q}$. Rewrite the measure theoretical information at the initial time $t_o=0$ in the larger ball $B_{4\rho}$ and replace $\al$ by $4^{-N}\al$. Afterwards,
 apply Lemma~\ref{Lm:meas-propogation} to obtain $\dl_o,\, \varep\in(0,1)$ depending only on the $\mathfrak{data}$ and $\al$, such that 
	\begin{equation}\label{Eq:exp-pos-meas:1}
	|\{
	 u(\cdot, t) \geqslant \varep k\} \cap B_{4\varrho} |
	\geqslant\frac{\al}2 4^{-N} |B_{4\varrho}|
	\quad\mbox{ for all $t\in(0, \dl_o k^{2-p}(4\varrho)^{sp}]$,}
\end{equation}
provided we enforce that
\[
\frac1{\dl_o}
{\rm Tail} [ u_-; \mathcal{Q}]
\leqslant k.
\]
The dependence is traced by 
\begin{align}\label{eps-dl-choice}
    \varep=\frac{\al}{4^{N+1}},\qquad \dl_o=\frac{\al^{N+p+1}}{\boldsymbol{\gm}}.
\end{align}
This measure estimate \eqref{Eq:exp-pos-meas:1} for each slice of the time interval in turn allows us to apply Lemma~\ref{Lm:meas-shrink:1} with parameters $(\al,\dl,\sig, k,\rho)$ replaced by $$(2^{-2N-1}\al,\dl_1,\sig_1, \varep k,4\rho);$$ we further require the parameters to satisfy 
\[
B_{4\varrho}\times (0, \dl_o k^{2-p}(4\varrho)^{sp}]\equiv B_{4\varrho}\times (0, \dl_1(\sig_1\varep k)^{2-p}(4\varrho)^{sp}].
\]
Comparing the time intervals on both sides of the above display, one finds that this is the case if  $\dl_1$ and $\sig_1$ fulfill the relation
\begin{align}\label{dl-1-sig-1}\nonumber
    \dl_1(\sig_1\varep)^{2-p}=\dl_o\quad&\Longleftrightarrow\quad \dl_1=\dl_o(\sig_1\varep)^{p-2}\\
    &\Longleftrightarrow\quad \dl_1=\frac{\al^{N+2p-1}}{\boldsymbol{\gm}}\sig_1^{p-2}
\end{align}
where the last expression exploits the definitions of $\varep$ and $\dl_o$ in \eqref{eps-dl-choice}, and $\boldsymbol{\gm}$ depends only on the $\mathfrak{data}$. Then, under this identification of cylinders, Lemma~\ref{Lm:meas-shrink:1} yields a measure estimate:
\begin{align}\label{Eq:measure-est}\nonumber
	|\{
	    u \leqslant \sig_1 \varep k \}&\cap B_{4\varrho}\times (0, \dl_o k^{2-p}(4\varrho)^{sp}]|\\ \nonumber
       &=|\{
	   u \leqslant \sig_1 \varep k \}\cap B_{4\varrho}\times (0, \dl_1(\sig_1\varep k)^{2-p}(4\varrho)^{sp}]|\\ 
	&\leqslant \boldsymbol\gm \frac{\sig_1^{p-1}}{\dl_1\al} |Q_{4\rho}(\dl_o k^{2-p})| = \boldsymbol{\gm}_*\frac{\sig_1}{\al^{N+2p}}|Q_{4\rho}(\dl_o k^{2-p})|,
\end{align}
where the last equality exploits $\dl_1$ identified in \eqref{dl-1-sig-1}, and $\boldsymbol{\gm}_*$ can be determined by the $\mathfrak{data}$ only, provided that
  $$ 
\frac1{\dl_1}  
{\rm Tail}[u_-; \mathcal{Q}]
\leqslant\sig_1\varep k,
  $$ 
and that the cylinder under consideration is included in $\mathcal{Q}$. Next, let $\nu_o$ be determined in Lemma~\ref{Lm:DG:2} in terms of the $\mathfrak{data}$, and choose $\sig_1$ to satisfy that
\begin{equation}\label{sigma-choice}
     \boldsymbol{\gm}_*\frac{\sig_1}{\al^{N+2p}}\leqslant \nu_o\quad\implies\quad \sig_1=\lm\al^{N+2p} \quad\text{where}\>\>\lm\in(0,\nu_o/ \boldsymbol{\gm}_*].
\end{equation} 
Note that in the above choice of $\sig_1$, the parameter $\lm$ can be made smaller at our will.
Once $\sig_1$ fulfills \eqref{sigma-choice}, from \eqref{Eq:measure-est} we can find an instant $t_*\in[0, \dl_o k^{2-p}(4\varrho)^{sp}]$ such that
\[
|\{
	   u(\cdot,t_*) \leqslant \sig_1 \varep k \}\cap B_{4\varrho} ]|\leqslant \nu_o |B_{4\varrho}|.
\]
Based on the above measure estimate, Lemma~\ref{Lm:DG:2} gives some $\dl$ and $\widetilde{\boldsymbol\gm}$  depending only on the $\mathfrak{data}$ and ensures that
\begin{align}\label{Eq:exp-pos:2}
    u\geqslant \tfrac18\sig_1\varep k
\end{align}
a.e. in
\[
    B_{2\rho}\times(t_*+\tfrac12\dl(\sig_1\varep k)^{2-p}(4\rho)^{sp}, t_*+\dl(\sig_1\varep k)^{2-p}(4\rho)^{sp}],
\]
provided that  the cylinder under consideration is included in $\mathcal{Q}$ and that
\begin{equation*}%\label{Eq:DG:alt}
\widetilde{\boldsymbol\gm} 
{\rm Tail}[u_-; \mathcal{Q}]\leqslant \sig_1\varep k.
\end{equation*}
The precise location of $t_*$ in $[0, \dl_o k^{2-p}(4\varrho)^{sp}]$ is unknown notwithstanding, the above pointwise estimate \eqref{Eq:exp-pos:2} always holds true in the cylinder
\[
B_{2\rho}\times( \tfrac34\dl(\sig_1\varep k)^{2-p}(4\rho)^{sp},  \dl(\sig_1\varep k)^{2-p}(4\rho)^{sp}].
\]
This is because we can adjust $\lm\in(0,\nu_o/ \boldsymbol{\gm}_*]$ in the definition \eqref{sigma-choice} of $\sig_1$, such that
no matter where  the instant $t_*$  is in $[0, \dl_o k^{2-p}(4\varrho)^{sp}]$, we can always conclude that
\begin{align*}
    &( \tfrac34\dl(\sig_1\varep k)^{2-p}(4\rho)^{sp},  \dl(\sig_1\varep k)^{2-p}(4\rho)^{sp}]\\
    &\qquad \subset(t_*+\tfrac12\dl(\sig_1\varep k)^{2-p}(4\rho)^{sp}, t_*+\dl(\sig_1\varep k)^{2-p}(4\rho)^{sp}].
\end{align*}
In fact, to get the above inclusion, it suffices to ensure
\begin{align*}
    \dl_o k^{2-p}(4\varrho)^{sp} + \tfrac12\dl(\sig_1\varep k)^{2-p}(4\rho)^{sp}\leqslant \tfrac34\dl(\sig_1\varep k)^{2-p}(4\rho)^{sp},
\end{align*}
that is,
\[
\dl_o\leqslant\tfrac14\dl(\sig_1\varep)^{2-p}.
\]
Note that $\varep,\,\dl_o\leqslant1$, and hence the above line is satisfied if we require
\[
\sig_1\leqslant(\tfrac{1}{4}\dl)^{\frac1{p-2}}.
\]
Therefore, taking into account the first smallness condition set in \eqref{sigma-choice}, we only need to pick  
\[
\lm=\min\bigg\{\Big( \frac{\dl}{4}\Big)^{\frac1{p-2}}, \,\frac{\nu_o}{\boldsymbol{\gm}_*} \bigg\}.
\]
In this way, the desired inclusion of time intervals is realized. The proof is concluded by taking all tail conditions into consideration and properly redefining parameters. 
\end{proof}

\begin{remark}\upshape
    The power-like dependence of $\eta=\eta(\al)$ is a crucial feature. This kind of result requires much more work for the $p$-parabolic type equations, cf.~\cite[Chapter~4]{DBGV-mono}.
\end{remark}
%%%%
Based on the first result, we formulate a corollary that examines how positivity grows from a tiny ball to a large one at later time.
\begin{corollary}\label{Cor:expansion}
Let $u$ be a   local weak super-solution to \eqref{Eq:1:1}--\eqref{Eq:K} that is nonnegative in $\mathcal{Q}$, with $p>2$.
Suppose for some constants  $\varep, \al  \in(0,1]$ and $k>0$, we have
	\begin{equation*}
		|\{ u(\cdot, t_o) \geqslant k \}\cap B_{\varep\varrho}(x_o) |
		\geqslant
		\al |B_{\varep\varrho} |.
		%\quad
		%\mbox{ for all $t_o<t< t_o+\dl\boldsymbol\om^{q+1-p}\varrho^p$}.
	\end{equation*}
There exist constants $ \be>1$, $\dl \in(0,1)$ depending only on the $\mathfrak{data}$ and $ \eta\in(0,1)$ depending  additionally on $\al$, such that 
%either $$|\boldsymbol \mu^{\pm}|>\xi M$$ or
%enforcing $C_2^{\frac1{p-1}}\rho>\xi M$, we have 
either
\begin{equation*}
	 {\rm Tail}[ u_{-}; \mathcal{Q}] >\eta \varep^{\be} k
\end{equation*}
 or
\begin{equation*}
	u\geqslant\eta \varep^{\be} k
	\quad
	\mbox{a.e.~in $ B_{2\varrho}(x_o) \times( t_o+\tfrac12 \dl (\eta \varep^{\be} k)^{2-p}\varrho^{sp},
	t_o+\dl (\eta\varep^{\be} k)^{2-p}\varrho^{sp}],$}
\end{equation*}
provided  
\[
B_{32\rho}(x_o)\times(t_o, t_o+\dl (\eta \varep^{\be} k)^{2-p}\varrho^{sp}]\subset \mathcal{Q}.
\]
Moreover, we have $\dl=2^{5+2sp-3p}\dl_o$ where $\dl_o$ is determined in Lemma \ref{Lm:DG:initial:1} and $\eta$ has the same dependence on $\al$ as in Proposition \ref{Prop:expansion}.
\end{corollary}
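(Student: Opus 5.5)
The plan is to start from the measure information in the tiny ball $B_{\varep\rho}(x_o)$, apply Proposition~\ref{Prop:expansion} once to turn it into pointwise positivity, and then iterate Proposition~\ref{Prop:expansion} with $\al=1$, doubling the radius at each step until it reaches order $\rho$. Each doubling costs a fixed factor $\eta_1:=\eta(1)$ in the level. After $n\approx\log_2(1/\varep)$ steps the level is $\eta(\al)\,\eta_1^{n}k$, and $\eta_1^{n}=\varep^{\log_2(1/\eta_1)}$ yields the power $\varep^{\be}$ with $\be=\log_2(1/\eta_1)$, which depends only on the $\mathfrak{data}$. Translate so that $(x_o,t_o)=(0,0)$.

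\emph{Step 1 (first expansion).} Proposition~\ref{Prop:expansion} applied in $B_{\varep\rho}$ gives $u\geqslant k_0:=\eta(\al)k$ a.e.\ in $B_{2\varep\rho}\times(t_1',t_1]$, where $t_1=\dl(k_0)^{2-p}(4\varep\rho)^{sp}$ and $t_1'=\tfrac34t_1$. This requires ${\rm Tail}[u_-;\mathcal Q]\leqslant k_0$.

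\emph{Step 2 (induction).} Set $r_j=2^{j}\varep\rho$ and $k_j=\eta_1^{j}k_0$, and suppose that at some instant $s_j$ we have $u(\cdot,s_j)\geqslant k_j$ a.e.\ in $B_{r_j}$. Then Proposition~\ref{Prop:expansion} with $\al=1$ gives $u\geqslant k_{j+1}$ a.e.\ in $B_{2r_j}\times(s_j+\tfrac34\tau_j,\,s_j+\tau_j]$, where $\tau_j=\dl(k_{j+1})^{2-p}(4r_j)^{sp}$. Pick $s_{j+1}=s_j+\tau_j$ and continue. Since $p>2$, the time lengths satisfy $\tau_{j+1}/\tau_j=\eta_1^{2-p}2^{sp}>1$, so they grow geometrically. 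Consequently the elapsed time $s_n=\sum_{j<n}\tau_j$ is bounded by a constant, depending only on the $\mathfrak{data}$, times the last step $\tau_{n-1}\approx\dl(\eta\varep^{\be}k)^{2-p}\rho^{sp}$. The tail conditions form a decreasing sequence of thresholds $\eta_1 k_j$, so all of them follow from the single condition ${\rm Tail}[u_-;\mathcal Q]\leqslant\eta\varep^{\be}k$, after renaming $\eta$. The containment hypothesis $B_{32\rho}\times(0,\dots]\subset\mathcal Q$ covers the balls $B_{4r_j}$ provided the final radius stays $\leqslant 8\rho$.

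\emph{Step 3 (placing the final time window); this is the main obstacle.} We need positivity on $B_{2\rho}$ throughout the prescribed interval $(\tfrac12\dl(\eta\varep^{\be}k)^{2-p}\rho^{sp},\,\dl(\eta\varep^{\be}k)^{2-p}\rho^{sp}]$, but the chain of Step 2 ends at the unknown time $s_n$, which is comparable to that scale. To handle this, I would stop the iteration once $r_n\geqslant 2\rho$ (so $r_n\leqslant 4\rho$). At that stage we have $u(\cdot,s_n)\geqslant k_n$ on $B_{4\rho}$, and I would apply Lemma~\ref{Lm:DG:initial:1} there. It propagates $u\geqslant\tfrac14k_n$ over $B_{2\rho}\times(s_n,\,s_n+\dl_o k_n^{2-p}(4\rho)^{sp}]$, an interval of length comparable to the whole elapsed time. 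Finally, I would redefine the level as $\tfrac14k_n$, absorbed into $\eta\varep^{\be}k$ by shrinking $\eta$, and shrink the constant in front of the time scale, which gives $\dl=2^{5+2sp-3p}\dl_o$ up to the bookkeeping. The aim is that $s_n\leqslant\tfrac12\dl(\cdot)^{2-p}\rho^{sp}$ while the end of the propagated interval is $\geqslant\dl(\cdot)^{2-p}\rho^{sp}$. Making $s_n$ small relative to $\dl_o k_n^{2-p}\rho^{sp}$ is possible because lowering the final level increases $k_n^{2-p}$ (as $p>2$), whereas $s_n$ is fixed by the earlier steps. This is exactly where the homogeneity mismatch of the $(s,p)$-scaling is exploited, and I expect the constant-chasing here to be the delicate part.
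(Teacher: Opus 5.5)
This is essentially the paper's proof. It applies Proposition~\ref{Prop:expansion} once with $\al$, then doubles the radius with $\al=1$, which gives $\be=\log_2(1/\eta_1)$ and an elapsed time controlled by a geometric sum $t_*$ of the step lengths. It then applies Lemma~\ref{Lm:DG:initial:1} at a lowered level $\xi\widetilde\eta k$, which is exactly your ``lowering the final level'', so that the propagated interval covers the fixed window $(t_*,2t_*]$.

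The one slip is your stopping rule. If you stop once $r_n\geqslant 2\rho$, you only have positivity on $B_{r_n}\supseteq B_{2\rho}$, not on $B_{4\rho}$. Since Lemma~\ref{Lm:DG:initial:1} halves the ball, you would end up only on $B_{\rho}$. Do one more doubling and stop at $r_n\geqslant 4\rho$, which is the paper's choice $4\leqslant 2^n\varep<8$. The inclusion $B_{32\rho}\times(\cdots]\subset\mathcal{Q}$ still accommodates this extra step.
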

%%%%
\begin{proof}
Assume $(x_o,t_o)=(0,0)$ after a translation and keep denoting the translated $\mathcal{Q}$ by the same symbol. Starting from the given measure theoretical information at $t_o=0$, we apply Proposition~\ref{Prop:expansion} to obtain $\eta,\,\dl\in(0,1)$, depending on the $\mathfrak{data}$ and $\al$, such that, after enforcing
\[
	{\rm Tail}[ u_{-}; \mathcal{Q}]\leqslant \eta  k,
\]
we have
\begin{equation*}
	u\geqslant\eta k
	%\quad
	%\mbox{a.e.~in $ B_{2\varep\varrho}  \times\underbrace{\big( \tfrac34 \dl (\eta M)^{2-p}(2^2\varep\varrho)^{sp},
	 %\dl (\eta M)^{2-p}(2^2\varep\varrho)^{sp}\big]}_{:=(\frac34 t_1,t_1]}.$}
\end{equation*}
a.e.~in $$ B_{2\varep\varrho}  \times\underbrace{( \tfrac34 \dl (\eta k)^{2-p}(2^2\varep\varrho)^{sp},
	 \dl (\eta k)^{2-p}(2^2\varep\varrho)^{sp}]}_{:=(\frac34 t_1,t_1]}.$$
 Using this pointwise estimate and applying again Proposition~\ref{Prop:expansion} with $\al=1$, we obtain $\bar{\eta}$ and $\bar\dl$ in $(0,1)$, depending only on the $\mathfrak{data}$, such that, after enforcing
\[
	  {\rm Tail}[ u_{-}; \mathcal{Q}] \leqslant \eta \bar{\eta} k,
\]
we have
\begin{equation*}
	u\geqslant\eta \bar{\eta} k
\end{equation*}
a.e.~in $$ B_{2^2\varep\varrho}  \times(  \tfrac34 t_1+\tfrac34 \bar\dl (\eta  \bar{\eta} k)^{2-p}(2^3\varep\varrho)^{sp},
	 t_1+\bar\dl (\eta \bar{\eta} k)^{2-p}(2^3\varep\varrho)^{sp}].$$
Repeating this argument $n$ times, we have, after enforcing
\[
	 {\rm Tail}[ u_{-}; \mathcal{Q}] \leqslant \eta  \bar{\eta}^n k,
\]
the pointwise estimate
\begin{equation*}
	u\geqslant\eta \bar{\eta}^n k
\end{equation*}
a.e.~in 
$$
B_{2^{n+1}\varep\varrho}  \times\Big( \tfrac34 t_1+ \sum_{i=1}^{n} \tfrac34 \bar\dl (\eta  \bar{\eta}^i k)^{2-p}(2^{i+2}\varep\varrho)^{sp}, t_1+ \sum_{i=1}^{n}\bar\dl (\eta \bar{\eta}^i k)^{2-p}(2^{i+2}\varep\varrho)^{sp}\Big].
$$
This procedure requires the set inclusion:
$$
B_{2^{n+2}\varep\varrho}  \times\Big( 0, t_1+ \sum_{i=1}^{n}\bar\dl (\eta \bar{\eta}^i k)^{2-p}(2^{i+2}\varep\varrho)^{sp}\Big] \subset \mathcal{Q}.
$$

Now, we choose the integer $n$ to satisfy $4\leqslant 2^n\varep<8$. In this way, the above pointwise estimate yields that
\begin{equation}\label{est-t-bar}
	u(\cdot,\bar{t})\geqslant\eta  \bar{\eta}^4  \varep^{\be} k =:\widetilde{\eta}k
	\quad
	\mbox{a.e.~in $ B_{8 \varrho}$,}
\end{equation}
for $$\be :=-\log_2\bar\eta$$ and for some $\bar{t}\geqslant0 $,
once we enforce
\begin{equation*}
	 {\rm Tail}[ u_{-}; \mathcal{Q}] \leqslant %\eta 2^{\log_2\bar{\eta}}\varep^{\be'} M =
     \widetilde{\eta}k.
\end{equation*}
The instant $\bar{t}$ has an upper bound which can be estimated  as follows:
\begin{align*}
    \bar{t}&\leqslant t_1 +\sum_{i=1}^{n}\bar\dl (\eta \bar{\eta}^i k)^{2-p}(2^{i+2}\varep\varrho)^{sp}\\
    &\leqslant (\eta k)^{2-p}(4\varep\rho)^{sp} \bigg[1 +    \sum_{i=1}^{n} (2^{sp}\bar\eta^{2-p})^i\bigg]\\
    &= (\eta k)^{2-p}(4\varep\rho)^{sp}\bigg[1 +  \frac{(2^{sp}\bar\eta^{2-p})^{n+1}-2^{sp}\bar\eta^{2-p}}{2^{sp}\bar\eta^{2-p} -1}\bigg]\\
    &\leqslant (\eta k)^{2-p}(4\varep\rho)^{sp}\bigg[1 +  \Big(\frac{16}{\varep}\Big)^{\log_2(2^{sp}\bar{\eta}^{2-p})}\bigg]\\
    &\leqslant 4^{ p}(\eta \bar{\eta}^4 k)^{2-p}(4\varep\rho)^{sp}  \varep^{-\log_2(2^{sp}\bar{\eta}^{2-p})}\\
    &=4^{ p}(\eta \bar{\eta}^4\varep^{\be} k)^{2-p}(4\rho)^{sp}=: t_* 
\end{align*}
where we used the definition of $\be$ to combine the $\varep$-terms in the last line.
The upper bound $t_*$ of $\bar t$ will help us locate the quantitative lower bound of $u$ next.
Indeed, using \eqref{est-t-bar}, we apply Lemma \ref{Lm:DG:initial:1} to get
\begin{align}\label{lower-bd-at-t-bar}
    u\geqslant\tfrac14\xi\widetilde{\eta}k	\quad
	\mbox{a.e.~in $ B_{4\varrho}\times (\bar{t},\bar{t}+\dl_o(\xi\widetilde{\eta}k)^{2-p}(4\rho)^{sp}]$,}
\end{align}
for $\dl_o=\dl_o(\mathfrak{data})$ and for some free parameter $\xi\in(0,1)$ to be fixed, provided that the above cylinder is included in $\mathcal{Q}$ and that
\[
\widetilde{\boldsymbol{\gm}}{\rm Tail}[ u_{-}; \mathcal{Q}] \leqslant \xi\widetilde{\eta}k.
\]
Let us choose $\xi$ to satisfy
\begin{align*}
(t_*,2t_*]&\equiv (4^{ p}(\eta \bar{\eta}^4\varep^{\be} k)^{2-p}(4\rho)^{sp}, 2\cdot 4^{ p}(\eta \bar{\eta}^4\varep^{\be} k)^{2-p}(4\rho)^{sp}]\\
&\subset(\bar{t},\bar{t}+\dl_o(\xi\widetilde{\eta}k)^{2-p}(4\rho)^{sp}]. 
\end{align*}
To this end, we only need to ensure
\begin{align*}
    \dl_o(\xi\widetilde{\eta}k)^{2-p}(4\rho)^{sp}\geqslant  2\cdot 4^{ p}(\eta \bar{\eta} ^4\varep^{\be} k)^{2-p}(4\rho)^{sp} .
\end{align*}
Upon using the definition of $\widetilde{\eta}$ in \eqref{est-t-bar}  in the above display, we find the choice 
$$
\xi:=\Big(\frac{\dl_o}{8^p}\Big)^{\frac{1}{p-2}}.
$$
Therefore, we obtain from \eqref{lower-bd-at-t-bar} that
\begin{align*}
    u\geqslant\tfrac14\xi\eta\bar{\eta}^4\varep^{\be}k	\quad
	\mbox{a.e.~in $ B_{4\varrho}\times (t_*,2t_*]$,}
\end{align*}
or equivalently, a.e.~in
$$
B_{4\varrho}\times(4^{ p}(\eta \bar{\eta}^4\varep^{\be} k)^{2-p}(4\rho)^{sp}, 2\cdot4^{ p}(\eta \bar{\eta}^4\varep^{\be} k)^{2-p}(4\rho)^{sp}],
$$
provided that
$$
B_{32\varrho}\times(0, 2\cdot4^{ p}(\eta \bar{\eta}^4\varep^{\be} k)^{2-p}(4\rho)^{sp}] \subset \mathcal{Q}.
$$
This is what we aim to show if we choose $\dl:=2^{5+2sp}\xi^{p-2}=2^{5+2sp-3p}\dl_o$ and redefine $\frac14\xi\eta\bar{\eta}^4$ as $\eta$.
\end{proof}
%%%%%%%%%%%%%%%%%%%%%%%%%%%%%%%%%%%%%%%
 
%%%%%%%%%%%
\section{Proof of Harnack type estimates}\label{S:Thm:1-proof}
This section is devoted to the proof of Theorem~\ref{Thm:1}. 
Under a change of variables, let us introduce a new function
\begin{equation}\label{Eq:u-to-v}
    v(x,t)
    :=
    \frac{u (x_o+\rho x, t_o+[u(x_o,t_o)]^{2-p}\rho^{sp} t )}{u(x_o,t_o)}\quad\text{in}\>\> E_T^*, % \widetilde{Q}_8=B_8\times(-8^{sp}, 8^{sp}].
\end{equation}
where $E_T^*$ denotes the domain that is transformed from $E_T$ under the change of variables.
This new function satisfies $v(0,0)=1$ and solves
\begin{equation*} 
\pl_t v + \mathscr{L}^* v=0\quad\text{weakly in}\>\> E_T^*,
\end{equation*}
%where $E_T=E\times(0,T]$ 
where the nonlocal operator $\mathscr{L}^*$ is defined by
\begin{equation*}%\label{Eq:1:2}
\mathscr{L}^*v(x,t):={\rm p.v.}\int_{\rn} \mathsf{K}^*(x,y,t) |v(x,t) - v(y,t) |^{p-2}  (v(x,t) - v(y,t) )\,\dy
\end{equation*}
and
\begin{equation*}
    \mathsf{K}^*(x,y,t):=\rho^{N+sp}\mathsf{K}(x_o+\rho x, x_o+\rho y,t_o+[u(x_o,t_o)]^{2-p}\rho^{sp} t)
\end{equation*}
for $(x,t),\,(y,t) \in E_T^*$. It is not hard to see that $\mathsf{K}^*(x,y,t)$ fulfills the same bounds as in \eqref{Eq:K}. The notion of solution is the same as in Definition~\ref{Def:sol}.
Thus $v$ is granted with the properties shown in previous sections within the domain $E_T^*$.

With this preparation at hand, we take on the proof in four steps. The first step turns out to be the most difficult one.
The reader is advised to take $v_-\equiv0$ on first reading.

%Note that since $v\geqslant0$ in $E_T^*$, $v_-\equiv0$ in $E_T^*$,
\subsection{The forward Harnack type estimate}
In this section, we aim to show that
there exist constants $\boldsymbol{\gm}_{\mathsf{h}}>1$ and $\kp,\,\sig\in(0,1)$ depending on the $\mathfrak{data}$, such that if  
\begin{equation}\label{proof:condi:1}
    B_{100}\times (-2\cdot8^{sp}\sig^{2-p} , 2\cdot 8^{sp} \sig^{2-p}  ]\subset E^*_T,
\end{equation} 
 %where $\theta=\sig^{2-p}$ 
 and if
\begin{equation}\label{proof:condi:2}
    \boldsymbol{\gm}_{\mathsf{h}}\mathrm{Tail}[v_-; B_{100}\times(-2\cdot 8^{sp}\sig^{2-p},2\cdot8^{sp}\sig^{2-p}]]\leqslant \sig,
\end{equation}
then we have the forward Harnack estimate:
    \begin{equation}\label{proof:concl:1}
         \inf_{B_1}u (\cdot, \kp\sig^{2-p} )\geqslant \sig.
    \end{equation} 
Note that since $v_-\equiv0$ in $E_T^*$ by assumption, its tail remains the same if we replace $B_{100}$ in \eqref{proof:condi:2} by a smaller ball.

Let $\be=\be(\mathfrak{data})$ be selected in Corollary \ref{Cor:expansion}. Consider the following two functions with a variable $r\in(0,1)$:
\begin{align*}
    \mathsf{M}_r:=\sup_{Q_r} v,\qquad \mathsf{N}_r:=(1-r)^{-\be}.
\end{align*}
Observe that they are nondecreasing along $r$ and $\mathsf{M}_o = \mathsf{N}_o=1$. Let us denote $r_*$ the largest root of the equation $\mathsf{M}_r=\mathsf{N}_r$, $r\in(0,1)$. Since $u$ is continuous, there is some $(y,\tau)\in \overline{Q}_{r_*}$ such that
\begin{align}\label{v-max}
    v(y,\tau)=\mathsf{M}_{r_*}=\mathsf{N}_{r_*}=(1-r_*)^{-\be}.
\end{align}
In addition, observe that
\[
(y,\tau)+Q_{\frac12(1-r_*)}\subset Q_{\frac12(1+r_*)}\subset Q_1.
\]
Since $\frac12(1+r_*)>r_*$, by definition of $r_*$, we have 
\[
\mathsf{M}_{\frac12(1+r_*)}\leqslant \mathsf{N}_{\frac12(1+r_*)}
\]
which joint with the above set inclusion yields that
\begin{align}\label{Eq:v-bound}
    \sup_{(y,\tau)+Q_{\frac12(1-r_*)}} v \leqslant \sup_{ Q_{\frac12(1+r_*)}} v = \mathsf{M}_{\frac12(1+r_*)}\leqslant \mathsf{N}_{\frac12(1+r_*)} = 2^\be (1-r_*)^{-\be}.
\end{align}
Before proceeding to the next step, let us pick some parameters
\begin{equation}\label{Eq:parameters}
\left\{
    \begin{array}{cc}
            \displaystyle\boldsymbol{\mu}^+ =\boldsymbol{\om} = 2^\be (1-r_*)^{-\be}, \quad R= \tfrac14(1-r_*),  \\ [5pt]
        \displaystyle   \xi= 1- \frac{1}{2^{\be+1}} ,\quad a= \frac{1-\displaystyle\frac32  \frac{1}{2^{\be+1}}}{1- \displaystyle\frac{1}{2^{\be+1}}} .
    \end{array}\right.
\end{equation}
Note that while $\be$ has been fixed quantitatively in terms of the $\mathfrak{data}$, we only know $r_*$ qualitatively. Therefore, we aim to eliminate $r_*$ from the final estimate.

Our next step consists in finding some $\nu,\,\sig,\, \eta\in(0,1)$ depending only on the $\mathfrak{data}$, such that
\begin{align} \label{claim-measure}\nonumber
&\text{either}\>\>\mathrm{Tail}[v_-; B_8\times(-2\cdot 8^{sp}\sig^{2-p},2\cdot8^{sp}\sig^{2-p}]]\geqslant \eta\\
 &\text{or}\>\>   |\{v(\cdot, \bar{\tau})\geqslant \eta (1-r_*)^{-\be}\}\cap B_{2R}(y)|\geqslant \nu|B_{2R}|%\quad\text{for some}\>\bar{\tau}\in \big[\tau- (\sig\xi\boldsymbol{\om})^{2-p} R^{sp}, \tau\big].
\end{align}
holds true for some 
\begin{equation}\label{Eq:tau-bar}
    \bar{\tau}\in  [\tau- (\sig\xi\boldsymbol{\om})^{2-p} (2R)^{sp}, \tau ],
\end{equation}
 provided that
\begin{equation*}
    B_{36}\times(-2\cdot8^{sp}\sig^{2-p},2\cdot8^{sp}\sig^{2-p}]\subset E_T^*.
\end{equation*}
To this end, we consider {\it two cases}. In the {\it first case}, let us suppose
\begin{align}\label{case-1}
    %\widetilde{\boldsymbol\gm}\int^\tau_{\tau- (\xi\boldsymbol{\om})^{2-p} R^{sp}}\int_{\rn\setminus B_R(y)}\frac{ v_{+}^{p-1}(x,t)}{|x-y|^{N+sp}}\,\dx\dt
    \widetilde{\boldsymbol\gm}\mathrm{Tail}[v_+; (y,\tau)+Q_{2R}((\xi\boldsymbol{\om})^{2-p})] \leqslant \xi\boldsymbol\om
\end{align}
holds true. Recall that in \eqref{case-1}, $(y,\tau)$ is a point in $\overline{Q}_{r_*}$ such that \eqref{v-max} holds; the parameters $R$,  $\xi$ and $\boldsymbol{\om}$ are defined in \eqref{Eq:parameters}; whereas $\widetilde{\boldsymbol{\gm}}$ is determined in Lemma~\ref{Lm:DG:1} in terms of the $\mathfrak{data}$. Moreover, we choose
$$\widetilde\nu=\frac{(1-a)^{q_1}}{\widetilde{\boldsymbol\gm}}$$
according to \eqref{Eq:nu-dep} in Lemma~\ref{Lm:DG:1} with $\dl=1$. Note that $\widetilde\nu$ depends only on the $\mathfrak{data}$ as $q_1$, $a$ and $\widetilde{\boldsymbol{\gm}}$ do.
Then, we assert that
\begin{equation}\label{Eq:mearure>nu}
    |\{v\geqslant \boldsymbol{\mu}^+-\xi\boldsymbol{\om}\}\cap (y,\tau)+Q_R((\xi\boldsymbol{\om})^{2-p})|>\widetilde\nu |Q_R((\xi\boldsymbol{\om})^{2-p})|,
\end{equation}
from which the measure theoretical estimate in \eqref{claim-measure}  follows with $\eta=\frac12$ and $\nu=2^{-N}\widetilde{\nu}$, noting that $\boldsymbol{\mu}^+-\xi\boldsymbol{\om}=\tfrac12 (1-r_*)^{-\be}$.
Let us suppose \eqref{Eq:mearure>nu} were false. Then, we appeal to Lemma~\ref{Lm:DG:1} with $\dl=1$, the parameters  defined in \eqref{Eq:parameters}, and the cylinders
\begin{equation*}
\left\{
    \begin{array}{cc}
        (x_o,t_o)+Q_\rho (\theta)=(y,\tau)+Q_R ((\xi\boldsymbol{\om})^{2-p} ), \\[5pt]
        \mathcal{Q}=(y,\tau)+Q_{2R}((\xi\boldsymbol{\om})^{2-p})\subset E_T^*.
    \end{array}\right.
\end{equation*}
The last set inclusion is ensured by \eqref{proof:condi:1}.
Under the conditions \eqref{Eq:v-bound} and \eqref{case-1}, the failure of \eqref{Eq:mearure>nu} would yield that
$$
v(y,\tau)\leqslant \boldsymbol{\mu}^+-a\xi\boldsymbol{\om}=\tfrac34 (1-r_*)^{-\be},
$$
a contradiction to \eqref{v-max}.

In the {\it second case}, we instead assume the reverse of \eqref{case-1}:
 \begin{align*}
    %\widetilde{\boldsymbol\gm}\int^\tau_{\tau- (\xi\boldsymbol{\om})^{2-p} R^{sp}}\int_{\rn\setminus B_R(y)}\frac{ v_{+}^{p-1}(x,t)}{|x-y|^{N+sp}}\,\dx\dt
    \widetilde{\boldsymbol\gm}\mathrm{Tail}[v_+; (y,\tau)+Q_{2R}((\xi\boldsymbol{\om})^{2-p})]> \xi\boldsymbol\om
\end{align*}
holds true. Using this assumption, we apply Lemma~\ref{Lm:meas-shrink:3} with the cylinders
\begin{equation*}
\left\{
    \begin{array}{cc}
      \mathcal{Q}=B_8\times(-2\cdot8^{sp}\sig^{2-p},2\cdot8^{sp}\sig^{2-p}],\\[5pt]
    (x_o,t_o)+Q_{\rho}(k^{2-p})=(y,\tau)+Q_{2R}(k^{2-p}),\\[5pt]
    (y,\tau)+Q_{4R}(k^{2-p})\subset \mathcal{Q},
\end{array}\right.
\end{equation*}
for $k=\sig\xi\boldsymbol{\om}$ and $\sig\in(0,1)$ to be selected. Note that the last set inclusion comes from the fact that $\tau\in(0,1)$, $k^{2-p}(4R)^{sp}\leqslant\sig^{2-p}$, and then the time intervals of the cylinders are nesting as 
\begin{equation}\label{Eq:nest-interval}
    1+k^{2-p}(4R)^{sp}\leqslant 2\cdot 8^{sp}\sig^{2-p}.
\end{equation}
In this way,  we find from Lemma~\ref{Lm:meas-shrink:3} that
\begin{align*}\nonumber
&\text{either}\>\>\mathrm{Tail}[v_-; B_8\times(-2\cdot8^{sp}\sig^{2-p},2\cdot8^{sp}\sig^{2-p}]]\geqslant\sig\xi\boldsymbol{\om}  \\
 &\text{or}\>\>   
    \essinf_{t\in[\tau- (\sig\xi\boldsymbol{\om})^{2-p} (2R)^{sp}, \tau]} |\{v(\cdot, t)\leqslant \sig\xi\boldsymbol{\om}\}\cap B_{2R}(y)|\leqslant \boldsymbol{\gm}\widetilde{\boldsymbol{\gm}}\sig|B_{2R}|.
\end{align*}
Let us set the first smallness requirement to $\sig$ by
\begin{equation}\label{Eq:sig:1}
    \sig\leqslant \frac{1}{2\boldsymbol{\gm}\widetilde{\boldsymbol{\gm}}}.
\end{equation}
We will set more smallness requirements to $\sig$ in the course of the proof.
However, assume momentarily that $\sig$ has been fixed. Then, the claim \eqref{claim-measure} with $\eta=2^\be\sig\xi$ and $\nu=\frac12$ follows from the above either-or estimate. Therefore, the desired alternative \eqref{claim-measure} holds in either of the two cases if we choose 
$$
\eta:=\min\bigg\{\frac12,\,2^\be\sig\xi\bigg\}, \quad \displaystyle\nu:=\min\bigg\{\frac12,\,\frac{(1-a)^{q_1}}{2^N\widetilde{\boldsymbol\gm}}\bigg\}.
$$
It is apparent that the above choices of parameters depend only on the $\mathfrak{data}$, once $\sig$ is chosen in terms of the $\mathfrak{data}$.

With the measure theoretical estimate \eqref{claim-measure} at hand, we now apply Corollary \ref{Cor:expansion} to $v$ with $\mathcal{Q}=B_{66}\times(-2\cdot8^{sp}\sig^{2-p},2\cdot8^{sp}\sig^{2-p}]$, at $(x_o,t_o)=(y,\bar\tau)$ and replacing $(\varep,\rho,\al,k)$ by $(2R,1,\nu,\eta(1-r_*)^{-\be})$. This application gives some $\dl$ and $\bar\eta$ in $(0,1)$ depending only on the $\mathfrak{data}$, such that
\begin{align}\label{Eq:v-lower-bd-1} \nonumber
 \text{either}\>\>\mathrm{Tail}[v_-; B_{66}\times(-2\cdot8^{sp}\sig^{2-p},2\cdot8^{sp}\sig^{2-p}]]\geqslant\underbrace{\bar{\eta}\eta(1-r_*)^{-\be} (2R)^\be}_{=:\widetilde{\eta}}  \\
  \text{or}\>\>  
    v\geqslant \widetilde{\eta}\quad\text{in}\> \>B_4(y)\times (\bar{\tau}+\tfrac12\dl\widetilde{\eta}^{2-p},\bar{\tau}+ \dl\widetilde{\eta}^{2-p}],
\end{align}
provided that 
\begin{equation}\label{Eq:set_incl_1}
  B_{64}(y)\times (\bar{\tau}, \bar{\tau}+\dl\widetilde{\eta}^{2-p}]\subset  B_{66}\times(-2\cdot8^{sp}\sig^{2-p},2\cdot8^{sp}\sig^{2-p}].
\end{equation}
Recalling that $R=\frac14(1-r_*)$, we find that the above-defined $\widetilde{\eta}$ is determined only by the $\mathfrak{data}$ and independent of $r_*$. To verify \eqref{Eq:set_incl_1}, it suffices to consider the time intervals. Given the possible range of $\bar\tau$ in \eqref{Eq:tau-bar}, we need
$$
1+k^{2-p}(2R)^{sp}\leqslant 2\cdot 8^{sp}\sig^{2-p}\quad\text{and}\quad \dl\widetilde{\eta}^{2-p}\leqslant 2\cdot 8^{sp}\sig^{2-p}.
$$
The first inequality is verified in \eqref{Eq:nest-interval}, whereas the second one gives the second smallness requirement of $\sig$ by
\begin{equation}\label{Eq:sig:2}
    \sig\leqslant\widetilde{\eta} .
\end{equation}
Note that the lower bound of $v$ in \eqref{Eq:v-lower-bd-1} can be taken in $B_2$ instead of $B_4(y)$ as $y\in B_1$.
Based on \eqref{Eq:v-lower-bd-1}, we further apply Lemma \ref{Lm:DG:initial:1} to $v$ with $\mathcal{Q}=B_{36}\times(-2\cdot8^{sp}\sig^{2-p},2\cdot8^{sp}\sig^{2-p}]$, $B_\rho(x_o)=B_2$, $k=\lm\widetilde{\eta}$ for  a free parameter $\lm\in(0,1)$ to be fixed, and $t_o$ ranging over the time interval in \eqref{Eq:v-lower-bd-1},  to conclude that
\begin{align}\label{Eq:v-lower-bd-2}\nonumber
&\text{either}\>\>\widetilde{\boldsymbol{\gm}}\mathrm{Tail}[v_-; B_{36}\times(-2\cdot8^{sp}\sig^{2-p},2\cdot8^{sp}\sig^{2-p}]]\geqslant\lm \widetilde{\eta}\\
 &\text{or}\>\>  
        v\geqslant \tfrac14 \lm \widetilde{\eta}\quad\text{in}\>\> B_1\times (\bar{\tau}+ \dl\widetilde{\eta}^{2-p},\bar{\tau}+ \dl\widetilde{\eta}^{2-p}+\dl_o(\lm\widetilde{\eta})^{2-p}]
\end{align}
for $\widetilde{\boldsymbol{\gm}}$ and $\dl_o$ depending only on the $\mathfrak{data}$,  provided that
\begin{align}\label{Eq:set-incl_2}\nonumber
    B_4\times &(\bar{\tau}+ \dl\widetilde{\eta}^{2-p},\bar{\tau}+ \dl\widetilde{\eta}^{2-p}+\dl_o(\lm\widetilde{\eta})^{2-p}]\\
    &\subset  B_{36}\times(-2\cdot8^{sp}\sig^{2-p},2\cdot8^{sp}\sig^{2-p}].
\end{align} 
Let us first select $\lm$ so small that
$$
 (\tfrac12\dl_o(\lm\widetilde{\eta})^{2-p}, \dl_o(\lm\widetilde{\eta})^{2-p}]\subset(\bar{\tau}+ \dl\widetilde{\eta}^{2-p},\bar{\tau}+ \dl\widetilde{\eta}^{2-p}+\dl_o(\lm\widetilde{\eta})^{2-p}];
$$
since $\bar{\tau}\leqslant0$, to get the above inclusion, it suffices to ensure that %\textcolor{red}{It seems that $\lm$ depends only on p, N.}
\begin{equation}\label{Eq:lm:0}
    \tfrac12\dl_o(\lm\widetilde{\eta})^{2-p}\geqslant \dl\widetilde{\eta}^{2-p}\quad\implies\quad \lm\leqslant\min\bigg\{1,\, \Big(\frac{\dl_o}{2\dl}\Big)^\frac{1}{p-2} \bigg\}.
\end{equation}
Then, to fulfill \eqref{Eq:set-incl_2}, it suffices to compare the right end of the time intervals in \eqref{Eq:set-incl_2}, that is, we require that
$$
\dl\widetilde{\eta}^{2-p}+\dl_o(\lm\widetilde{\eta})^{2-p}\leqslant 8^{sp}\sig^{2-p},
$$
which gives the third smallness requirement of $\sig$ by
\begin{equation*}%\label{Eq:sig:3}
    \sig\leqslant \frac{\lm\widetilde{\eta}}{2^\frac{1}{p-2}}.
\end{equation*}
This condition is automatically stronger than \eqref{Eq:sig:2}, and it will be stronger than \eqref{Eq:sig:1} if we impose that
\begin{equation}\label{Eq:lm:1}
    \lm\leqslant \frac{1}{2\boldsymbol{\gm}\widetilde{\boldsymbol{\gm}}}.
\end{equation}
With the restrictions of $\lm$ in \eqref{Eq:lm:0} and \eqref{Eq:lm:1}, we finally choose
\begin{equation}\label{Eq:sig-choice-final}
     \sig := \min\bigg\{\frac14,\, \frac{1}{2^\frac{1}{p-2}}\bigg\}\lm\widetilde{\eta}.
\end{equation}
Taking into consideration the tail alternatives in \eqref{claim-measure}, \eqref{Eq:v-lower-bd-1} and \eqref{Eq:v-lower-bd-2}, we conclude that if the set inclusion \eqref{proof:condi:1} holds and if
\begin{equation*}%\label{proof:condi:2}
    %\boldsymbol{\gm}_{\mathsf{h}}
    \frac{\widetilde{\boldsymbol{\gm}}}{\lm\widetilde{\eta}}\mathrm{Tail}[v_-; B_{36}\times(-2\cdot 8^{sp}\sig^{2-p},2\cdot8^{sp}\sig^{2-p}]]\leqslant 1,
\end{equation*}
then we have
\[
\inf_{B_1} v(\cdot, \dl_o(\lm\widetilde{\eta})^{2-p})\geqslant\tfrac14\lm\widetilde{\eta}.
\]
This is exactly \eqref{proof:condi:2} and \eqref{proof:concl:1}, if we define
\begin{align*}
    \kappa:= \dl_o\min\bigg\{\frac1{4^{p-2}},\, \frac{1}{2}\bigg\},\qquad\boldsymbol{\gm}_h:=\widetilde{\boldsymbol{\gm}} \min\bigg\{\frac14,\, \frac{1}{2^\frac{1}{p-2}}\bigg\}.
\end{align*}
We also remark that $\sig$ defined in \eqref{Eq:sig-choice-final} can be taken smaller by adjusting $\lm$ according to \eqref{Eq:lm:0} and \eqref{Eq:lm:1}.
Hence, the right-hand side inequality of \eqref{Thm:conclusion:1} is proven.

\subsection{The backward Harnack type estimate}
Let us keep using the transformed function $v$ defined in \eqref{Eq:u-to-v}. Fix the constants $\kappa$, $\sig$ and $\boldsymbol{\gm}_{\mathsf{h}}$ selected in the forward estimate. In this section, we aim to show the backward estimate:
\begin{align}\label{backward-claim}
   \sup_{\rho\in(0,1)} \sup_{B_\rho}v(\cdot, -(1+\varep)^{2-p}\kappa\rho^{sp})\leqslant \frac{1+\varep}{\sig}
\end{align}
holds for any $\varep\in(0,1)$,  provided that \eqref{proof:condi:1} and \eqref{proof:condi:2} are satisfied.  Then, we would finish the proof of the backward estimate by letting $\varep\to0$ in \eqref{backward-claim}.

Let us suppose to the contrary that \eqref{backward-claim} is false. Since $v$ is continuous and $v(0,0)=1$, there must be some $\rho_*\in(0,1]$ and some $y\in\overline{B}_{\rho_*}$, such that 
%where the supremum in \eqref{backward-claim}  is attained, i.e.
\begin{equation}\label{v-max-attained}
    v(y, -(1+\varep)^{2-p}\kappa\rho_*^{sp})=\frac{1+\varep}{\sig}.
\end{equation}
For simplicity, let us denote the time slice in \eqref{v-max-attained} by $\tau:=-(1+\varep)^{2-p}\kappa\rho_*^{sp}$, so that \eqref{v-max-attained} reads $v(y, \tau)=(1+\varep)/\sig$.  
Applying the forward estimate that has been proven in the previous section now at $(y,\tau)$, we acquire that
\begin{equation}\label{forward-est-at-max}
    \sig v(y, \tau)\leqslant  \inf_{B_{\rho_*}(y)} v(\cdot, \tau+\kappa\theta\rho_*^{sp})
\end{equation}
 where $\theta:=[\sig v(y, \tau)]^{2-p}$,
provided that
\begin{equation} \label{cylinder-inclusion}
    B_{66\rho_*}(y)\times (\tau-2\theta(8\rho_*)^{sp}, \tau+ 2\theta (8\rho_*)^{sp} ]\subset E_T^*
\end{equation} 
 and that
\begin{equation} \label{tail-control}
    \boldsymbol{\gm}_{\mathsf{h}}\int_{\tau-2\theta(8\rho_*)^{sp}}^{\tau+2\theta(8\rho_*)^{sp}}\int_{\rn\setminus B_{66\rho_*}(y)}\frac{v_-^{p-1}(x,t)}{|x-y|^{N+sp}}\,\dx\dt\leqslant \sig v(y, \tau).
\end{equation}
Observe that the left-hand side of \eqref{forward-est-at-max} is equal to $1+\varep$ due to \eqref{v-max-attained} while the right-hand side is bounded by $1$ as $\tau+\kappa\theta\rho_*^{sp}=0$ by definition of $\tau$ and $0\in B_{\rho_*}(y)$, and consequently the infimum in \eqref{forward-est-at-max} is bounded by $v(0,0)=1$. Therefore, a contradiction appears in \eqref{forward-est-at-max}, provided that we enforce \eqref{cylinder-inclusion} and \eqref{tail-control}. Using $\theta\leqslant 2^{2-p}$, $\tau\leqslant 1$ and $y\in B_{1}$, we see that \eqref{proof:condi:1} is  more stringent than \eqref{cylinder-inclusion}.
% \begin{equation} \label{cylinder-inclu-1}
%     B_{38}\times (-4\cdot 8^{sp}, 4\cdot 8^{sp} ]\subset E_T^*.
% \end{equation} 
Moreover, since $v_-\equiv0$ in $E_T^*$, we may estimate the integral in \eqref{tail-control} by
\begin{align*}
    \int_{-4\cdot 8^{sp}}^{4\cdot 8^{sp}}\int_{\rn\setminus B_{4}}\frac{v_-^{p-1}(x,t)}{|x-y|^{N+sp}}\,\dx\dt\leqslant
    \Big(\frac43\Big)^{N+sp}\int_{-4\cdot 8^{sp}}^{4\cdot 8^{sp}}\int_{\rn\setminus B_{4}}\frac{v_-^{p-1}(x,t)}{|x|^{N+sp}}\,\dx\dt
\end{align*}
where we also used the fact $|x-y|\geqslant\frac34|x|$ due to $|y|\leqslant 1$ and $|x|\geqslant4$, whence we see that \eqref{proof:condi:2} places a more stringent condition than \eqref{tail-control} if we replace $\boldsymbol{\gm}_{\mathsf{h}}$ by $(4/3)^{N+sp}\boldsymbol{\gm}_{\mathsf{h}}$.
Therefore, we have established \eqref{backward-claim} under the conditions \eqref{proof:condi:1} and \eqref{proof:condi:2}.

\subsection{The representation type estimate I}
Let us keep using $v$ defined in \eqref{Eq:u-to-v}. According to the backward estimate proven in the last section, we have
\begin{align}\label{sup-v-sig}
    \sup_{B_1\times (-2^{sp}\kappa, -\kappa]} v\leqslant\sig^{-1},
\end{align}
provided that \eqref{proof:condi:1} and \eqref{proof:condi:2} are verified. We apply Lemma~\ref{Lm:meas-shrink:3} with $k=\sig^{-1}$, $c=\sig^{2-p}\kappa(2^{sp}-1)$, and cylinders
\begin{equation*}
\left\{
    \begin{array}{cc}
        (x_o,t_o)+Q_{\rho}(ck^{2-p})= B_1\times(-2^{sp}\kappa,-\kappa], \\[5pt]
        \mathcal{Q}=B_{100}\times (-2\cdot8^{sp}\sig^{2-p} , 2\cdot 8^{sp} \sig^{2-p}  ].
    \end{array}\right.
\end{equation*}
Note that by \eqref{sup-v-sig} we have
$$
\essinf_{t\in(-2^{sp}\kappa,-\kappa]}|\{v(\cdot,t)\leqslant k= \sig^{-1} \}\cap B_1|=|B_1|,
$$
whence  Lemma~\ref{Lm:meas-shrink:3} yields that
\begin{align}\label{tail-sig}
    {\rm Tail}[v_+; B_1\times(-2^{sp}\kappa,-\kappa]]\leqslant \frac{\boldsymbol{\gm}}{\sig}
\end{align}
provided that  \eqref{proof:condi:1} and \eqref{proof:condi:2} are verified.
Likewise, an application of Lemma~\ref{Lm:meas-shrink:2} gives that
\begin{align}\label{mean-sig}
    [v^{p-1}]_{B_1\times(-2^{sp}\kappa,-\kappa]}\leqslant \frac{\boldsymbol{\gm}}{\kappa\sig}
\end{align}
provided that  \eqref{proof:condi:1} and \eqref{proof:condi:2} are verified.
Let us combine \eqref{tail-sig} and \eqref{mean-sig} to obtain that
\begin{equation}\label{Eq:backward:0}
    %\frac{1}{\boldsymbol{\gm}_{\mathsf{H}}}\,
    \int^{-\kappa}_{-2^{sp}\kappa}\int_{\rn}\frac{v_+^{p-1}(x,t)}{ (1+|x|)^{N+sp}}\,\dx\dt \leqslant \frac{\boldsymbol{\gm}^{\prime}_{\mathsf{h}}}{\kappa\sig}
\end{equation}
The desired estimate follows from \eqref{Eq:backward:0} if we revert to $u$.

\subsection{The representation type estimate II}
%The proof hinges upon the energy estimate in Proposition~\ref{Prop:2:1}.
The essence is an $L^\infty$-estimate of solutions based on the energy estimate and DeGiorgi's iteration; we adapt the scheme presented in \cite{Liao-DD-24} for linear equations.

First suppose $Q(R,\theta)\subset E_T$, and let $\rho\in(0,R)$.
Take $(x_o,t_o)=(0,0)$ for simplicity. 
For $n\in\nn_0$, $\lm\in(0,1)$, and $k>0$, introduce 
\begin{align*}
	\left\{
	\begin{array}{c}
 k_n= k - 2^{-n} k, 
\\[5pt]
 \rho_n=\lm\rho + 2^{-n}(1-\lm)\rho,\quad  \theta_n=\lm \theta + 2^{-n}(1-\lm)\theta ,\\[5pt]
 \widetilde{\rho}_n=\frac12(\rho_n+\rho_{n+1}), \quad \widehat{\rho}_n=\frac34\rho_n+ \frac14\rho_{n+1},\quad \overline{\rho}_n=\frac14\rho_n+ \frac34\rho_{n+1},\\[5pt]
 \widetilde{\theta}_n=\frac12(\theta_n+\theta_{n+1}), \quad \widehat{\theta}_n=\frac34\theta_n+ \frac14\theta_{n+1},\quad \overline{\theta}_n=\frac14\theta_n+ \frac34\theta_{n+1},\\[5pt]
  B_n=B_{\rho_n}, \quad \widetilde{B}_n=B_{\widetilde{\rho}_n},\quad \widehat{B}_n=B_{\widehat\rho_n},\quad \overline{B}_n=B_{\overline{\rho}_n},\\[5pt]
\dsty Q_n=B_n\times(-\theta_n,0),\quad\widetilde{Q}_n=\widetilde{B}_n\times(-\widetilde{\theta}_n,0),\\[5pt]
\widehat{Q}_n=\widehat{B}_n\times(-\widehat{\theta}_n,0),\quad\overline{Q}_n=\overline{B}_n\times(-\overline{\theta}_n,0).
\end{array}
	\right.
\end{align*}
Observe that $Q_o=Q(\rho,\theta)$, $Q_{\infty}=Q(\lm\rho,\lm\theta)$, and $Q_{n+1}\subset\overline{Q}_n\subset\widetilde{Q}_n\subset\widehat{Q}_n\subset Q_n$.
Let $\z\in C_0^1 (\widehat{Q}_n,[0,1])$ be such that $\z=1$ in $\widetilde{Q}_{n}$, and moreover,
\begin{equation*}
|\nabla \z|\leqslant \frac{2^{n+4}}{(1-\lm)\rho}\quad\text{ and }\quad |\pl_t \z|\leqslant \frac{2^{n+4}}{(1-\lm)\theta}.
\end{equation*}
The energy estimate of Proposition~\ref{Prop:2:1} is used in $Q_n$ with $\z$ and
with 
$$
w_+(x,t):= (u(x,t)-\ell(t)-k_{n+1} )_+, \quad \ell(t):=\widetilde{\boldsymbol\gm}\int^t_{-\theta}\int_{\rn\setminus B_R}\frac{ u^{p-1}_{+}(y,\tau)}{|y|^{N+2s}}\,\dy\d\tau,
$$
where $\widetilde{\boldsymbol\gm}>0$ is to be determined.
As a result, we have
\begin{align}\nonumber
	\sup_{t\in[-\widetilde{\theta}_n,0]}&\int_{\widetilde{B}_n} w^2_{+}(x,t)\,\dx+ \iiint_{\widetilde{\mathbb{Q}}_n}   \frac{|w_+(x,t) - w_+(y,t)|^p}{|x-y|^{N+sp}}\,\dx\dy\dt\\\nonumber
	&\quad\leqslant
	\boldsymbol\gm\iiint_{\mathbb{Q}_n} \max\{w^p_{+}(x,t), w^p_{+}(y,t)\} \frac{|\z(x,t) - \z(y,t)|^p}{|x-y|^{N+sp}}\,\dx\dy\dt\\\nonumber
	&\qquad+\boldsymbol\gm\iint_{Q_n} \z^p w_{+}(x,t)\,\dx\dt \bigg[\sup_{\substack{x\in \widehat{B}_n}} \int_{\rn\setminus B_n}\frac{ w^{p-1}_{+}(y,t)}{|x-y|^{N+sp}}\,\dy\bigg]\\\label{Eq:energy-DG-}
	&\qquad-2\iint_{Q_n}  \ell^{\prime}(t) \z^p w_{+}(x,t)\,\dx\dt + \iint_{Q_n} |\pl_t\z^p|w_{+}^2(x,t)\,\dx\dt.
\end{align}
The meaning of $\mathbb{Q}_n$ and $\widetilde{\mathbb{Q}}_n$ in \eqref{Eq:energy-DG-} is clear from the context.
Estimating the first term on the right-hand side of \eqref{Eq:energy-DG-} is standard:
\begin{align*}
\iiint_{\mathbb{Q}_n} &\max\{w^p_{+}(x,t), w^p_{+}(y,t)\} \frac{|\z(x,t) - \z(y,t)|^p}{|x-y|^{N+sp}}\,\dx\dy\dt\\
&\leqslant  \frac{2^{np+4p+1}}{(1-\lm)^p\rho^{p}}\iiint_{\mathbb{Q}_n}  \frac{w^p_{+}(x,t)}{|x-y|^{N+p(s-1)}}\,\dx\dy\dt\\
&\leqslant   \frac{\boldsymbol \gm 2^{np}}{(1-\lm)^p\rho^{sp}} \iint_{Q_n} w^p_{+}(x,t)\,\dx\dt.
\end{align*}
The last term is also standard, namely,
\[
\iint_{Q_n} |\pl_t\z^p|w_{+}^2(x,t)\,\dx\dt \leqslant\frac{2^{n+4}}{(1-\lm)\theta } \iint_{Q_n} w_{+}^2(x,t)\,\dx\dt.
\]
The second term and the third, negative term need to be packed.  To this end, observe that when $|y|\geqslant \rho_n$ and $|x|\leqslant \widehat\rho_n$, there holds
\[
\frac{|y-x|}{|y|}\geqslant1-\frac{\widehat\rho_n}{ \rho_n}=\frac14\Big(\frac{\rho_n-\rho_{n+1}}{\rho_n}\Big)\geqslant\frac{1-\lm}{2^{n+3}};
\]
when $|y|\geqslant R$ and $|x|\leqslant \rho$, there holds
\[
\frac{|y-x|}{|y|}\geqslant1-\frac{|x|}{|y|}\geqslant 1-\frac{\rho}{ R}. 
\]
Consequently, 
we estimate the second term as
\begin{align}\nonumber
\iint_{Q_n}&  \z^p w_{+}(x,t)\,\dx\dt \bigg[\sup_{\substack{x\in\widehat{B}_n}} \int_{\rn\setminus B_n}\frac{ w^{p-1}_{+}(y,t)}{|x-y|^{N+sp}}\,\dy\bigg]\\ \nonumber
&=\iint_{Q_n}  \z^p w_{+}(x,t)\,\dx\dt \\ \nonumber
&\qquad\cdot \sup_{\substack{x\in\widehat{B}_n}}\bigg[\int_{B_R\setminus B_n}\frac{ w^{p-1}_{+}(y,t)}{|x-y|^{N+sp}}\,\dy+\int_{\rn\setminus B_R}\frac{ w^{p-1}_{+}(y,t)}{|x-y|^{N+sp}}\,\dy\bigg]\\ \nonumber
&\leqslant   \frac{\boldsymbol\gm 2^{(N+sp)n}}{(1-\lm)^{N+sp}} \iint_{Q_n}  \z^p w_{+}(x,t)\,\dx\dt \bigg[ \int_{B_R\setminus B_n}\frac{ w^{p-1}_{+}(y,t)}{|y|^{N+sp}}\,\dy\bigg]\\ \nonumber
&\qquad+  \Big(\frac{R}{R-\rho}\Big)^{N+sp}   \iint_{Q_n}  \z^p w_{+}(x,t)\,\dx\dt \bigg[ \int_{\rn\setminus B_R}\frac{ w^{p-1}_{+}(y,t)}{|y|^{N+sp}}\,\dy\bigg]\\ \nonumber
&\leqslant  \frac{\boldsymbol\gm 2^{(N+sp)n}}{[\lm (1-\lm)]^{N+sp}\rho^{sp}}\Big(\frac{R}{\rho}\Big)^{N}\Big[\sup_{Q(R,\theta)}u\Big]^{p-1}   \iint_{Q_n}  \z^2 w_{+}(x,t)\,\dx\dt \\ \nonumber
&\qquad+\Big(\frac{R}{R-\rho}\Big)^{N+sp}   \iint_{Q_n}  \z^p w_{+}(x,t)\,\dx\dt  \bigg[ \int_{\rn\setminus B_R}\frac{ u^{p-1}_{+}(y,t)}{|y|^{N+sp}}\,\dy\bigg]. 
\end{align}
The last term in the above estimate will cancel with  the third, negative term on the right-hand side of the energy estimate \eqref{Eq:energy-DG-},
if we choose 
$$
2\widetilde{\boldsymbol\gm}=\Big(\frac{R}{R-\rho}\Big)^{N+sp}$$
which implies that
$$ \ell(t)=\frac{1}{2}\Big(\frac{R}{R-\rho}\Big)^{N+sp}\int^t_{-\theta}\int_{\rn\setminus B_R}\frac{ u^{p-1}_{+}(y,\tau)}{|y|^{N+sp}}\,\dy\d\tau.
$$ As a result of this choice, the second and the third terms in \eqref{Eq:energy-DG-} together are bounded by
\[
 \frac{\boldsymbol\gm 2^{(N+sp)n}\mathbf{I}}{[\lm(1-\lm)]^{N+sp}\rho^{sp}} \iint_{Q_n}  \z^p w_{+}(x,t)\,\dx\dt
 \]
where
 \[
 \mathbf{I}:=\Big(\frac{R}{\rho}\Big)^{N}\Big[\sup_{Q(R,\theta)}u\Big]^{p-1}.
\]
Before proceeding, let us observe the fact that
\begin{equation}\label{Eq:obs}
|A_n|:=|\{u(x,t) -\ell(t)\geqslant k_{n+1}\}\cap Q_n | \leqslant\frac{2^{p(n+1)}}{k^p} \iint_{Q_n}   \widetilde{w}^p_{+}(x,t)\,\dx\dt,
\end{equation}
where we denoted
\[
\widetilde{w}_+(x,t):= (u(x,t)-\ell(t)-k_{n} )_+.
\]
As a result of \eqref{Eq:obs}, noticing also that $w_+\leqslant \widetilde{w}_+$, we have
\begin{align*}
&\iint_{Q_n}  \z^p w_{+}(x,t)\,\dx\dt \\
&\leqslant \bigg[\iint_{Q_n}  w^p_{+}(x,t)\,\dx\dt \bigg]^{\frac1p} |\{u(x,t) -\ell(t)\geqslant k_{n+1}\}\cap Q_n |^{1-\frac1p}\\
&\leqslant \frac{2^{(n+1)(p-1)}}{k^{p-1}} \iint_{Q_n}   \widetilde{w}^p_{+}(x,t)\,\dx\dt.
\end{align*}
Likewise,
\begin{align*}
&\iint_{Q_n}   w^2_{+}(x,t)\,\dx\dt \\
&\leqslant \bigg[\iint_{Q_n}  w^p_{+}(x,t)\,\dx\dt \bigg]^{\frac2p} |\{u(x,t) -\ell(t)\geqslant k_{n+1}\}\cap Q_n |^{1-\frac2p}\\
&\leqslant \frac{2^{(n+1)(p-2)}}{k^{p-2}} \iint_{Q_n}   \widetilde{w}^p_{+}(x,t)\,\dx\dt.
\end{align*}
Collecting these estimates on the right-hand side of \eqref{Eq:energy-DG-} and using $w_+\leqslant \widetilde{w}_+$, we arrive at
\begin{align}\label{Eq:cacciopoli}\nonumber
\sup_{t\in[-\widetilde\theta_n,0]}&\int_{\widetilde{B}_n} w_+^2\,\dx +
\iiint_{\widetilde{\mathbb{Q}}_n}   \frac{|w_+(x,t) - w_+(y,t)|^p}{|x-y|^{N+sp}}\,\dx\dy\dt\\ \nonumber
&\leqslant \frac{\boldsymbol\gm 2^{(N+p)n}}{(1-\lm)^{N+p}}\bigg(\frac{k^{2-p}}{\theta } +\frac1{\rho^{sp}} +\frac1{\lm^{N+sp}\rho^{sp}}\frac{\mathbf{I}}{k^{p-1}}\bigg)  \iint_{Q_n}   \widetilde{w}^p_{+}\,\dx\dt\\
&\leqslant  \frac{\boldsymbol\gm 2^{(N+p)n}}{(1-\lm)^{N+p}\dl\lm^{N+sp}} \frac{1}{\rho^{sp}} \iint_{Q_n}   \widetilde{w}^p_{+}\,\dx\dt.
\end{align}
To obtain the last line, we enforced for a parameter $\dl\in(0,1)$ that
\begin{equation}\label{Eq:k-M}
k^{p-1}\geqslant \dl \mathbf{I}\quad\text{and}\quad k\geqslant \Big(\frac{\rho^{sp}}{\theta}\Big)^\frac{1}{p-2}.
\end{equation}

In order to properly use \eqref{Eq:cacciopoli}, consider  $\phi\in C_0^1 (\overline{Q}_n,[0,1])$, which  
 satisfies $\z=1$ in $Q_{n+1}$ and  $|\nabla\phi|\leqslant 2^{n+4}/\rho$.
 An application of H\"older's inequality, Sobolev's embedding
(cf.~\cite[Proposition~A.3]{Liao-cvpd-24} with $d=2^{-n-4}$), and \eqref{Eq:cacciopoli}  gives that 
\begin{align}\label{Eq:apply-sobolev}\nonumber
	&\iint_{Q_{n+1}} w_+^p\,\dx\dt
	\leqslant 
	\iint_{\widetilde{Q}_n}(\phi w_+)^p\,\dx\dt\\ \nonumber
	& \leqslant
	\bigg[\iint_{\widetilde{Q}_n}(\phi w_+)^{p\kappa }
	\,\dx\dt\bigg]^{\frac{1}{\kappa }}|A_n|^{1-\frac{1}{\kappa }}\\ \nonumber
	& \leqslant\boldsymbol\gm
	\bigg[\rho^{sp} \iiint_{\widetilde{\mathbb{Q}}_n} \frac{|\phi w_+(x,t) - \phi w_+(y,t)|^p}{|x-y|^{N+sp}}\,\dx\dy\dt+\iint_{\widetilde{Q}_n}\frac{(\phi w_+)^p}{d^{N+sp}}\,
	\dx\dt\bigg]^{\frac{1}{\kappa }}\\
	&  \qquad\qquad
	\cdot\bigg[\sup_{t\in[-\widetilde\theta_n,0]}
	\bint_{\widetilde{B}_n}(\phi w_+)^2\,\dx\bigg]^{\frac{\kappa_*-1}{\kappa_* \kappa }}
	 |A_n|^{1-\frac{1}{\kappa }}
\end{align}
where $|A_n|$ is defined in \eqref{Eq:obs} and $$\kappa:=1+\frac{2(\kappa_*-1)}{p\kappa_*}$$ with
\begin{equation*}
\kappa_*:=\left\{
\begin{array}{cl}
\frac{N}{N-sp} \quad&\text{if}\quad sp<N,\\[5pt]
\text{any number}\in (1,\infty) \quad &\text{if} \quad sp \geqslant N.
\end{array}\right.
\end{equation*} 
% \cite[Proposition~A.3]{Liao-cvpd-24} for the precise definition of $\kappa_*=\kappa_*(N,s)$.
To estimate the fractional norm in \eqref{Eq:apply-sobolev}, we use the triangle inequality
\begin{align*}
&|w_+\phi(x,t) - w_+\phi(y,t)|^p \\
&\quad\leqslant 2^{p-1}   |w_+(x,t) - w_+(y,t)  |^p \phi^p(x,t) 
 + 2^{p-1}  w^p_+(y,t)   |\phi(x,t) - \phi(y,t) |^p,
\end{align*}
such that
\begin{align*}
\rho^{sp}&\iiint_{\widetilde{\mathbb{Q}}_n}   \frac{|w_+\phi(x,t) - w_+\phi(y,t)|^p}{|x-y|^{N+sp}}\,\dx\dy\dt\\
&\leqslant 2^{p-1} \rho^{sp}\iiint_{\widetilde{\mathbb{Q}}_n}  \frac{|w_+(x,t) - w_+(y,t)|^p}{|x-y|^{N+sp}}\,\dx\dy\dt\\
&\quad + 2^{p-1} \rho^{sp}\iiint_{\widetilde{\mathbb{Q}}_n}  \frac{w^p_+(y,t)|\phi(x,t) - \phi(y,t)|^p}{|x-y|^{N+sp}}\,\dx\dy\dt\\
&\leqslant 2^{p-1} \rho^{sp}\iiint_{\widetilde{\mathbb{Q}}_n} \frac{|w_+(x,t) - w_+(y,t)|^p}{|x-y|^{N+sp}}\,\dx\dy\dt\\
&\quad + \boldsymbol\gm 2^{2n} \iint_{\widetilde{Q}_n}  w^p_+(y,t) \,\dy\dt\\
&\leqslant  \frac{\boldsymbol\gm 2^{(N+p)n}}{(1-\lm)^{N+p} \dl\lm^{N+sp}}  \iint_{Q_n}   \widetilde{w}^p_{+}(x,t)\,\dx\dt.
\end{align*}
Here, to obtain the last line we applied \eqref{Eq:cacciopoli}, and also used that $w_+\leqslant \widetilde{w}_+$ and $\widetilde{Q}_n\subset Q_n$. Observe the term containing $d$ in \eqref{Eq:apply-sobolev} can be estimated by the same quantity as in the last display, and  the term with $\sup_t$ in \eqref{Eq:apply-sobolev} is also estimated by this quantity according to \eqref{Eq:cacciopoli}, whereas the term $|A_n|$ in \eqref{Eq:apply-sobolev} is estimated by \eqref{Eq:obs}.
Therefore, we derive from \eqref{Eq:apply-sobolev} that
\begin{align*}
&\iint_{Q_{n+1}}  w_+^p\,\dx\dt\\
&\leqslant 
	\frac{\dsty\boldsymbol\gm\boldsymbol b^n  }{[(1-\lm)^{N+p}\dl\lm^{N+sp}]^{\frac{2\kappa_*-1}{\kappa_*\kappa }}} \frac{k^{p(\frac1\kappa -1)}}{\rho^{(N+sp)\frac{\kappa_* -1}{\kappa_*\kappa }} }  
	\bigg[\iint_{Q_{n}} \widetilde{w}_+^p\,\dx\dt\bigg]^{1+\frac{\kappa_* -1}{\kappa_*\kappa }}
\end{align*}
for some $\boldsymbol b=\boldsymbol b(N)>1$.
At this stage, we denote
\[
Y_n:=\frac{1}{k^p}\biint_{Q_{n}} (u(x,t)-\ell(t) - k_{n})_+^p\,\dx\dt,
\]
recall the definition of $\kappa$, and rewrite the previous recursive estimate as
\begin{align*}
    Y_{n+1} &\leqslant \frac{\dsty\boldsymbol\gm\boldsymbol b^n  }{[(1-\lm)^{N+p}\dl\lm^{N+sp}]^{\frac{2\kappa_*-1}{\kappa_*\kappa }}}\Big(\frac{\theta}{k^{p-2}\rho^{sp}}\Big)^{\frac{\kappa_* -1}{\kappa_*\kappa }}
Y_n^{1+\frac{\kappa_* -1}{\kappa_*\kappa }} \\
& \leqslant \frac{\dsty\boldsymbol\gm\boldsymbol b^n  }{[(1-\lm)^{N+p}\dl\lm^{N+sp}]^{\frac{2\kappa_*-1}{\kappa_*\kappa }}} 
Y_n^{1+\frac{\kappa_* -1}{\kappa_*\kappa }},
\end{align*}
where we used the second of \eqref{Eq:k-M}.
Hence, by the fast geometric convergence, cf. \cite[Chapter I, Lemma~4.1]{DB}, 
there exists
a  constant $\boldsymbol\gm>1$ depending only on the $\mathfrak{data}$, such that
if we require  
\begin{equation}\label{Eq:k-M:1}
Y_o=\frac{1}{k^p}\biint_{Q_{o}} (u-\ell)_+^p\,\dx\dt\leqslant  (1-\lm)^{N+p}\dl\lm^{N+sp}  ,
\end{equation}
then
$$
\lim_{n\to\infty}Y_n=0, \quad\text{i.e.}\quad u(x,t) \leqslant \ell(t) + k\quad\text{a.e. in}\>\> Q_\infty=Q(\lm\rho,\lm\theta).
$$ 
Taking both requirements \eqref{Eq:k-M} and \eqref{Eq:k-M:1}  of $k$ into account, we obtain that
\begin{align}\nonumber
\sup_{Q(\lm\rho,\lm \theta)} u &\leqslant \dl^{\frac{1}{p-1}}  \Big(\frac{R}{\rho}\Big)^{\frac{N}{p-1}} \sup_{Q(R,\theta)}u  + \Big(\frac{\rho^{sp}}{\theta}\Big)^\frac{1}{p-2}\\ \nonumber
&\quad+ \frac{1}{2}\Big(\frac{R}{R-\rho}\Big)^{N+sp}\int^0_{-\theta}\int_{\rn\setminus B_R}\frac{ u^{p-1}_{+}(y,\tau)}{|y|^{N+2s}}\,\dy\d\tau\\ \label{Eq:A:4}
&\quad+  \frac{\boldsymbol\gm }{(1-\lm)^{N+p}\dl\lm^{N+sp}}\bigg[\biint_{Q(\rho, \theta)} u_+^p\,\dx\dt\bigg]^{\frac1p}.
\end{align}

Our next goal is to refine \eqref{Eq:A:4}, which holds true for all $Q(R,\theta)\subset E_T$, $\dl,\,\lm\in(0,1)$ and $\rho\in(0,R)$.
We particularly aim to absorb the first term on the right-hand side of \eqref{Eq:A:4} into the left, and  to reduce the integral exponent in the last term  to $p-1$.  To this end,  we first estimate the last term in \eqref{Eq:A:4}  by Young's inequality:
\begin{align*}
&\frac{\boldsymbol\gm }{(1-\lm)^{N+p}\dl\lm^{N+sp}}\bigg[\biint_{Q(\rho, \theta)} u_+^p\,\dx\dt\bigg]^{\frac1p}\\
&\quad\leqslant\frac{\boldsymbol\gm }{(1-\lm)^{N+p}\dl\lm^{N+sp}}\bigg[\biint_{Q(\rho, \theta)} u_+^{p-1}\,\dx\dt\bigg]^{\frac1p}\Big[\sup_{Q(R,\theta)} u\Big]^{\frac{1}{p}}\\
&\quad\leqslant \dl^{\frac{1}{p-1}} \sup_{Q(R,\theta)} u + \frac{\boldsymbol\gm }{[(1-\lm)^{N+p}\dl\lm^{N+sp}]^{\frac{p}{p-1}}\dl^{\frac{1}{(p-1)^2}}} \bigg[ \biint_{Q(\rho, \theta)} u_+^{p-1}\,\dx\dt\bigg]^{\frac1{p-1}}.
\end{align*}
We plug this estimate back to \eqref{Eq:A:4}   to get
\begin{align}\nonumber
\sup_{Q(\lm\rho,\lm \theta)} u &\leqslant \dl^{\frac{1}{p-1}}\bigg[1+\Big(\frac{R}{\rho}\Big)^{\frac{N}{p-1}}\bigg] \sup_{Q(R,\theta)} u +\Big(\frac{\rho^{sp}}{\theta}\Big)^\frac{1}{p-2}\\ \nonumber
&\quad+ \frac12\Big(\frac{R}{R-\rho}\Big)^{N+sp} {\rm Tail} [u_+; Q(R, \theta)]\\ \label{Eq:A:5}
&\quad+  \frac{\boldsymbol\gm }{[(1-\lm)^{N+p}\dl\lm^{N+sp}]^{\frac{p}{p-1}}\dl^{\frac{1}{(p-1)^2}}} \bigg[ \biint_{Q(\rho, \theta)} u_+^{p-1}\,\dx\dt\bigg]^{\frac1{p-1}},
\end{align}
 for all $Q(R,\theta)\subset E_T$, $\dl,\,\lm\in(0,1)$ and $\rho\in(0,R)$.

Next, we use \eqref{Eq:A:5} to perform an interpolation argument. To this end, introduce for  $n\in\nn_0$
\begin{equation*}
\left\{
\begin{array}{cc}
\dsty\rho_n=  (1- 2^{-n-1})\rho,\quad\theta_n= (1 - 2^{-n-1}) \theta,\\ [5pt]
\dsty\widetilde{\rho}_n=\tfrac12(\rho_n +\rho_{n+1}),\quad \widetilde{\theta}_n=\tfrac12(\theta_n +\theta_{n+1}),\quad \lm_{n}=\frac{\rho_{n}}{\widetilde{\rho}_{n}}=\frac{\theta_{n}}{\widetilde{\theta}_{n}}.
\end{array}\right.
\end{equation*}
We apply \eqref{Eq:A:5} with $(\lm, \rho, R,  \theta)$ replaced by 
$(\lm_{n}, \widetilde{\rho}_n, \rho_{n+1}, \theta_{n+1})$ %Notice also that $\theta_n\leqslant\lm_n\theta_{n+1}$.
and obtain that for $n\in\nn_0$
\begin{align}\nonumber
\sup_{Q(\rho_n,\theta_n)} u &\leqslant \dl^{\frac{1}{p-1}}\bigg[1+\Big(\frac{\rho_{n+1}}{\widetilde{\rho}_n}\Big)^{\frac{N}{p-1}}\bigg] \sup_{Q(\rho_{n+1},\theta_{n+1})} u + \Big(\frac{\widetilde{\rho}_n^{sp}}{\theta_n}\Big)^\frac{1}{p-2}\\ \nonumber
&\quad +  \frac12\Big(\frac{\rho_{n+1}}{\rho_{n+1}-\widetilde{\rho}_n}\Big)^{N+sp} {\rm Tail} [u_+; Q(\rho_{n+1}, \theta_{n+1})]\\ \nonumber
&\quad+\frac{\boldsymbol\gm }{[(1-\lm_n)^{N+p}\lm_n^{N+sp}]^{\frac{p}{p-1}}\dl^{\frac{p(p-1)+1}{(p-1)^2}}} \bigg[ \biint_{Q(\widetilde{\rho}_n, \theta_{n+1})} u_+^{p-1}\,\dx\dt\bigg]^{\frac1{p-1}}\\ \label{Eq:A:6}
&=:\mathbf{I}_1+\mathbf{I}_2+\mathbf{I}_3+\mathbf{I}_4.
\end{align}
Let us continue to estimate the right-hand side of \eqref{Eq:A:6}. For the first term, we use the simple fact $\rho_{n+1}/\widetilde{\rho}_n\leqslant2$ to bound it, such that
\[
\mathbf{I}_1\leqslant \dl^{\frac{1}{p-1}} 4^{N}\sup_{Q(\rho_{n+1},\theta_{n+1})} u.
\] 
For the second term, we estimate
\[
\mathbf{I}_2=\Big(\frac{\widetilde{\rho}_n^{sp}}{\theta_n}\Big)^\frac{1}{p-2}\leqslant  \Big(\frac{2\rho^{sp}}{\theta}\Big)^\frac{1}{p-2}.
\]
The third term is estimated by
\begin{align*}
\mathbf{I}_3&=  \frac12\Big(\frac{2\rho_{n+1}}{\rho_{n+1}-\rho_n}\Big)^{N+sp} {\rm Tail} [u_+; Q(\rho_{n+1}, \theta_{n+1})] \\
 &\leqslant  2^{(n+3)(N+sp)}  {\rm Tail} [u_+; Q(\tfrac12 \rho, \theta)].
\end{align*}
To estimate the forth term, we use 
\begin{align*}
    \lm_n\in[\tfrac45, 1)\quad\text{and}\quad 1-\lm_n\geqslant 2^{-n-3}.
\end{align*}
Consequently, we get
\begin{align*}
\mathbf{I}_4\leqslant\frac{\boldsymbol\gm 2^{n\frac{p(N+p)}{p-1}} }{ \dl^{\frac{p(p-1)+1}{(p-1)^2}}} \bigg[ \biint_{Q(\rho, \theta)} u_+^{p-1}\,\dx\dt\bigg]^{\frac1{p-1}}.
\end{align*}
Combining all these estimates in \eqref{Eq:A:6}, we arrive at the recursive inequality for $n\in\nn_0$:
\begin{align*}
\sup_{Q(\rho_n,\theta_n)} u \leqslant \bar{\dl} \sup_{Q(\rho_{n+1},\theta_{n+1})} u +\mathcal{B}_{\dl} \boldsymbol{b}^n,
\end{align*}
where $\boldsymbol{b}:=2^{\frac{p(N+p)}{p-1}}$, $\bar{\dl}:=\dl^{\frac{1}{p-1}} 4^N$ and
\begin{align*}
\mathcal{B}_\dl:=  \Big(\frac{2\rho^{sp}}{\theta}\Big)^\frac{1}{p-2}+{\rm Tail} [u_+; Q(\tfrac12\rho, \theta)] +\frac{\boldsymbol\gm  }{ \dl^{\frac{p(p-1)+1}{(p-1)^2}}} \bigg[ \biint_{Q(\rho, \theta)} u_+^{p-1}\,\dx\dt\bigg]^{\frac1{p-1}}.
\end{align*}
Iterating this recursive inequality to obtain
\[
\sup_{Q(\rho_o,\theta_o)} u \leqslant \bar{\dl}^n \sup_{Q(\rho_{n},\theta_{n})} u +\mathcal{B}_{\dl} \sum_{i=0}^{n-1}(\bar{\dl}\boldsymbol{b})^i.
\]
Finally, we choose $\bar\dl=1/(2\boldsymbol{b})$ and let $n\to\infty$ to obtain
\begin{align*}
    \sup_{\frac12 Q(\rho,\theta)}u \leqslant \Big(\frac{2\rho^{sp}}{\theta}\Big)^\frac{1}{p-2}+  {\rm Tail} [u_+; Q(\tfrac12\rho, \theta)] + \boldsymbol\gm   \bigg[ \biint_{Q(\rho, \theta)} u_+^{p-1}\,\dx\dt\bigg]^{\frac1{p-1}}
\end{align*}
for some $\boldsymbol\gm = \boldsymbol\gm (\mathfrak{data})$.

Finally, let us apply the above estimate to the rescaled function $v$ that has been introduced in \eqref{Eq:u-to-v}. Then, 
\begin{align*}
    v(0,0)=1 \leqslant \Big(\frac{2 }{\theta}\Big)^\frac{1}{p-2}+  {\rm Tail} [v_+; Q(\tfrac12 , \theta)] + \boldsymbol\gm   \bigg[ \biint_{Q(1, \theta)} v_+^{p-1}\,\dx\dt\bigg]^{\frac1{p-1}}.
\end{align*}
We will control the first and the third terms.
Let us choose $\theta$ to satisfy
\begin{align*}
    \Big(\frac{2 }{\theta}\Big)^\frac{1}{p-2}\le\frac{1}{2} \quad\Longrightarrow\quad \theta= 2^{p-1}.
\end{align*}
Moreover, we estimate the last term by Young's inequality:
\begin{align*}
    \boldsymbol\gm   \bigg[ \biint_{Q(1, \theta)} v_+^{p-1}\,\dx\dt\bigg]^{\frac1{p-1}}
    \le \frac14+ 4^{p-2}\boldsymbol{\gm}^{p-1}\biint_{Q(1, \theta)} v_+^{p-1}\,\dx\dt.
\end{align*}
We combine these estimates  to obtain
\begin{align*}
    \frac{1}{4} &\leqslant   {\rm Tail} [v_+; Q(\tfrac12 , \theta)] + 4^{p-2}\boldsymbol{\gm}^{p-1}\biint_{Q(1, \theta)} v_+^{p-1}\,\dx\dt\\
    &\leqslant \boldsymbol{\gm}^{\prime\prime}_{\mathsf{h}} \int_{-\theta}^0\int_{\rn} \frac{v_+^{p-1}(x,t)}{(1+|x|)^{N+sp}}\,\dx\dt.
\end{align*}
The desired estimate can then be obtained by reverting to $u$. The final choice of $\boldsymbol{\gm}_{\mathsf{H}}$ is made out of the largest one among $4\boldsymbol{\gm}^{\prime\prime}_{\mathsf{h}}$ appearing in the above display, $\boldsymbol{\gm}^\prime_{\mathsf{h}}$  in \eqref{Eq:backward:0}, and $\boldsymbol{\gm}_{\mathsf{h}}$  in \eqref{proof:condi:2}
%%%%%%%%%%%%%%%%%%%%%%%%%%%%%%%%%%%%%%%%%%%%%%%%%%

%%%%%%%%%%%%%%%%%%%%%%%%%%%%%%%%%%%%%%%%%%%%%%%%%%

\section{Proof of the strong maximum principle}\label{S:Thm:2-proof}
We prove Theorem~\ref{Thm:2} in three steps. The argument presents some universal features that are independent of particular nonlinear problems. 

\underline{\it Step 1.}
Consider $(x_o,t_o)\in E\times(0,T)$ such that $u(x_o,t_o)>0$; in this step we aim to show that $u(x_o,t)>0$ for all $t\in(t_o,T)$. To this end, let us first  denote
\[
\mathsf{M}:=\sup_{[t_o,T]}u(x_o,\cdot)\quad\text{and}\quad \rho_*:=\min\Big\{\tfrac{1}{200}\dist(x_o,\pl E),\,1\Big\}.
\]
Then, let us choose 
$\rho_o $ to satisfy
\begin{align}\label{Eq:incl:0}\nonumber
&0<\rho_o\leqslant \rho_*, \>\>\text{and}\\
    &\big(t_o-2[\sig u(x_o,t_o)]^{2-p}(8\rho_o)^{sp}, t_o+ 2[\sig u(x_o,t_o)]^{2-p} (8\rho_o)^{sp} \big]\subset(0,T],
\end{align}
which plays the role of \eqref{Thm:condition:1}.
Assuming \eqref{Eq:incl:0}, the forward estimate in \eqref{Thm:conclusion:1} yields that
\begin{align*}
    \inf_{[t_o,t_1]}u(x_o,\cdot)\geqslant\sig u(x_o,t_o)\quad\text{where}\>\> t_1=t_o+[\sig u(x_o,t_o)]^{2-p}\rho_o^{sp}.
\end{align*}
Now, we specify the choice of $\rho_o$ in order to verify \eqref{Eq:incl:0}.
Indeed, the inclusion of time intervals in \eqref{Eq:incl:0} amounts to requiring that
\begin{equation}\label{Eq:time:0}
    2[\sig u(x_o,t_o)]^{2-p}(8\rho_o)^{sp}<\min\{t_o,\, T-t_o\}.
\end{equation}
Next, we choose $C>1$ such that
\begin{equation}\label{Eq:C-condition}
     \frac1C\min\{t_o,\, T-t_o\}\leqslant 2[\sig u(x_o,t_o)]^{2-p}(8\rho_*)^{sp};
\end{equation}
by $\min\{t_o,\, T-t_o\}\leqslant T$ and $u(x_o,t_o)\leqslant\mathsf{M}$, this is the case if $C$ satisfies
\[
    \frac{T}{C}\leqslant 2(\sig \mathsf{M})^{2-p}(8\rho_*)^{sp}
\]
which in turn indicates the choice 
\begin{equation}\label{Eq:C-choice}
C:=\max\bigg\{\frac{T}{2(\sig \mathsf{M})^{2-p}(8\rho_*)^{sp}},\, 2\bigg\}.
\end{equation}
Let us fix $C$ according to \eqref{Eq:C-choice} and choose $\rho_o$ via the equation
\[
2[\sig u(x_o,t_o)]^{2-p}(8\rho_o)^{sp}=\frac1C\min\{t_o,\, T-t_o\}.
\]
Such a choice conforms to \eqref{Eq:time:0}, which is the second condition in \eqref{Eq:incl:0}, and also fulfills the first condition in \eqref{Eq:incl:0} due to \eqref{Eq:C-condition}.

Now, suppose for some $i\in\N_0$ we have selected $\rho_i$ and $t_i$, such that
\begin{equation}\label{Eq:rho:i}
    2[\sig u(x_o,t_{i})]^{2-p}(8\rho_{i})^{sp}=\frac1C\min\{t_{i},\, T-t_{i}\},
\end{equation}
and  that
\begin{align}\label{Eq:incl:i}\nonumber
&0<\rho_i\leqslant \rho_* \>\>\text{and}\\
    &\big(t_i-2[\sig u(x_o,t_i)]^{2-p}(8\rho_i)^{sp}, t_i+ 2[\sig u(x_o,t_i)]^{2-p} (8\rho_i)^{sp} \big]\subset(0,T].
\end{align}
Moreover, we have obtained from the forward estimate in \eqref{Thm:conclusion:1} that
\begin{align}\label{Eq:forward:i}
    \inf_{[t_i,t_{i+1}]}u(x_o,\cdot)\geqslant\sig u(x_o,t_i)\quad\text{where}\>\> t_{i+1}=t_i+[\sig u(x_o,t_i)]^{2-p}\rho_i^{sp}.
\end{align}
Next, we aim to show the above estimate holds with  $i$ substituted by $i+1$, the crux being finding a proper $\rho_{i+1}$. For that, we basically repeat what has been done for $\rho_o$; we need to choose 
$\rho_{i+1} $ to satisfy
\begin{align}\label{Eq:incl:i+1}
&0<\rho_{i+1}\leqslant \rho_*  \>\>\text{and}\\ \nonumber
    &\big(t_{i+1}-2[\sig u(x_o,t_{i+1})]^{2-p}(8\rho_{i+1})^{sp}, t_{i+1}+ 2[\sig u(x_o,t_{i+1})]^{2-p} (8\rho_{i+1})^{sp} \big]%\\ \nonumber
    \subset(0,T].
\end{align}
Once $\rho_{i+1}$ is found from \eqref{Eq:incl:i+1}, we appeal to the forward estimate in \eqref{Thm:conclusion:1} to get that
\begin{align*}
    \inf_{[t_{i+1},t_{i+2}]}u(x_o,\cdot)\geqslant\sig u(x_o,t_{i+1})\quad\text{where}\>\> t_{i+2}=t_{i+1}+[\sig u(x_o,t_{i+1})]^{2-p}\rho_{i+1}^{sp}.
\end{align*}
Now, we specify the choice of $\rho_{i+1}$ in order to verify \eqref{Eq:incl:i+1}.
Indeed, the inclusion of time intervals in \eqref{Eq:incl:i+1} amounts to requiring that
\begin{equation}\label{Eq:time:i+1}
    2[\sig u(x_o,t_{i+1})]^{2-p}(8\rho_{i+1})^{sp}<\min\{t_{i+1},\, T-t_{i+1}\}.
\end{equation}
Next, we choose $C$ such that
\begin{equation}\label{Eq:C-condition-}
     \frac1C\min\{t_{i+1},\, T-t_{i+1}\}\leqslant 2[\sig u(x_o,t_{i+1})]^{2-p}(8\rho_*)^{sp};
\end{equation}
this is the case if $C$ is defined via \eqref{Eq:C-choice} because $\min\{t_{i+1},\, T-t_{i+1}\}\leqslant T$ and $u(x_o,t_{i+1})\leqslant \mathsf{M}$.
Let us fix $C$ according to \eqref{Eq:C-choice} and choose $\rho_{i+1}$ by
\[
2[\sig u(x_o,t_{i+1})]^{2-p}(8\rho_{i+1})^{sp}=\frac1C\min\{t_{i+1},\, T-t_{i+1}\}.
\]
Such a choice conforms to \eqref{Eq:time:i+1}, which is the second condition in \eqref{Eq:incl:i+1}, and also fulfills the first condition in \eqref{Eq:incl:i+1} due to \eqref{Eq:C-condition-}. Therefore, by induction we conclude that \eqref{Eq:incl:i} and \eqref{Eq:forward:i} hold  for all $i\in\N_0$.

Finally, let us compute from \eqref{Eq:rho:i} and \eqref{Eq:forward:i} that
\begin{equation}\label{Eq:t_i}
    t_{i+1}=t_i + \frac1{2C\cdot 8^{sp}}\min\{t_{i},\, T-t_{i}\}\qquad \forall \> i\in \N_0.
\end{equation}
The sequence defined via the recurrence \eqref{Eq:t_i} is non-decreasing and bounded by $T$. Then, it must have a limit, which we denote as $t_\infty\in \rr_+$. Let us send $i\to\infty$ in \eqref{Eq:t_i} and obtain that
\[
t_{\infty}=t_\infty + \frac1{2C\cdot 8^{sp}}\min\{t_{\infty},\, T-t_{\infty}\}
\]
from which we deduce that
\[
t_\infty=T.
\]
Meanwhile, note that the forward estimate \eqref{Eq:forward:i} for all $i\in\nn$ implies that
\begin{equation*}
    \inf_{[t_o,t_{i+1}]} u(x_o,\cdot)>0\qquad \forall \> i\in \N_0.
\end{equation*}
Therefore, we conclude the first step.

\underline{\it Step 2.} 
In this step, our goal is to show that if $E$ is connected and if $u(x_o,t_o)>0$ for some $(x_o,t_o)\in E\times(0,T)$, then $u(\cdot, t_o)>0$ on $E$. First observe that by continuity, the set $\{u(\cdot, t_o)>0\}\cap E$ is open in $E$. We will show that this set is also closed in $E$. Consequently, $\{u(\cdot, t_o)>0\}\cap E=E$ because $E$ is connected.

Let us pick a sequence $x_n\in \{u(\cdot, t_o)>0\}\cap E$, $n\in \nn$ and suppose they converge to some $x_o\in E$. It suffices to show $x_o\in \{u(\cdot, t_o)>0\}\cap E$.
Suppose to the contrary that $u(x_o,t_o)=0$ and define
\[
\rho_*:=\min\Big\{\tfrac{1}{200}\dist(x_o,\pl E),\,1\Big\}.
\]
Without loss of generality we may assume that 
\begin{equation}\label{Eq:x_n-x_o}
    |x_n-x_o|< \tfrac12\dist(x_o,\pl E)=100\rho_*\qquad \forall \> n\in\nn.
\end{equation}
Using this we estimate for an arbitrary $y\in\pl E$ that
\begin{align*}
   \dist(x_o,\pl E)&\leqslant |x_o-y| \leqslant |x_o-x_n| + |x_n-y|\\
  & \leqslant \tfrac12\dist(x_o,\pl E) + |x_n-y|,%\\& = \tfrac{1}{400}\dist(x_o,\pl E) + \dist(x_n,\pl E),
\end{align*}
from which we obtain
\begin{equation}\label{Eq:dist-compare}
    \dist(x_o,\pl E) \leqslant 2\dist(x_n,\pl E).
\end{equation}

Now, we select a sequence of radii $\rho_n$ such that
\begin{align}\label{Eq:incl:n}\nonumber
&0<\rho_n\leqslant \tfrac{1}{100}\dist(x_n,\pl E) \>\>\text{and}\\
    &\big(t_o-2[\sig u(x_n,t_o)]^{2-p}(8\rho_n)^{sp}, t_i+ 2[\sig u(x_n,t_o)]^{2-p} (8\rho_n)^{sp} \big]\subset(0,T],
\end{align}
which plays the role of \eqref{Thm:condition:1}.
The inclusion of time intervals in \eqref{Eq:incl:n} suggests we may choose $\rho_n$ via
\begin{equation}\label{Eq:choice-rho-n}
    2[\sig u(x_n,t_{o})]^{2-p}(8\rho_{n})^{sp}=\frac1C\min\{t_{o},\, T-t_{o}\}
\end{equation}
where $C>1$ (we slightly abuse the symbol) satisfies
\begin{equation}\label{Eq:C-condition+}
    \frac1C\min\{t_{o},\, T-t_{o}\}\leqslant 2[\sig u(x_n,t_{o})]^{2-p}(8\rho_{*})^{sp}.
\end{equation}
Using \eqref{Eq:C-condition+}  in \eqref{Eq:choice-rho-n}, we see that $\rho_n\leqslant \rho_*$ from which, recalling also the definition of $\rho_*$ and \eqref{Eq:dist-compare}, we deduce
\[
100\rho_n \leqslant \tfrac12\dist(x_o,\pl E) \leqslant  \dist(x_n,\pl E).
\]
This is exactly the first condition in \eqref{Eq:incl:n}. 

Now, let us make the final choice of $C$ from \eqref{Eq:C-condition+}, independent of $x_n$. By \eqref{Eq:x_n-x_o}, we have 
\begin{align*}
    \{x_n: n\in \nn\}\subset E_*:=\{x\in E: \dist(x,\pl E)\geqslant \rho_*\};
\end{align*}
since $E_*$ is compact, we denote (slightly abusing the symbol)
\[
\mathsf{M}:=\sup_{E_*} u(\cdot, t_o).
\]
Then, we will have \eqref{Eq:C-condition+} if we let
\begin{align*}
    C=\bigg\{\frac{T}{2(\sig \mathsf{M})^{2-p}(8\rho_*)^{sp}},\,  2\bigg\}.
\end{align*}
Such a choice of $C$ makes \eqref{Eq:incl:n} valid that takes the place of \eqref{Thm:condition:1}; the forward representation type estimate in \eqref{Thm:conclusion:2} applied with $\lm\rho_n$ for some $\lm\in(0,1)$ gives that
\begin{align}\label{Eq:apply-forward}
    \frac{1}{\boldsymbol{\gm}_{\mathsf{H}}}\,\int^{t_o-\kappa\theta_n(\lm\rho_n)^{sp}}_{t_o-\kappa\theta_n(2\lm\rho_n)^{sp}}\int_{\rn}\frac{u^{p-1}(x,t)}{ (\lm\rho_n+|x-x_n|)^{N+sp}}\,\dx\dt  \leqslant u(x_n,t_o) 
\end{align}
where we denoted $\theta_n=[u(x_n,t_o)]^{2-p}$. By the choice of $\rho_n$ in \eqref{Eq:choice-rho-n}, we compute that
\[
\kappa\theta_n(\lm\rho_n)^{sp}=\lm^{sp}\frac{\kappa\sig^{p-2}}{2\cdot 8^{sp}}\min\{t_{o},\, T-t_{o}\}=: \lm^{sp}\mathsf{b}.
\]
Hence, recalling also $\rho_n\leqslant \rho_*\leqslant 1$, we get from \eqref{Eq:apply-forward} that
\begin{align*}
        \frac{1}{\boldsymbol{\gm}_{\mathsf{H}}}\,\int^{t_o-\lm^{sp}\mathsf{b}}_{t_o-(2\lm)^{sp}\mathsf{b}}\int_{\rn}\frac{u^{p-1}(x,t)}{ (1+|x-x_n|)^{N+sp}}\,\dx\dt  \leqslant u(x_n,t_o) .
\end{align*}
Let us send $n\to\infty$ in the above display to obtain
\begin{align*}
        \frac{1}{\boldsymbol{\gm}_{\mathsf{H}}}\,\int^{t_o-\lm^{sp}\mathsf{b}}_{t_o-(2\lm)^{sp}\mathsf{b}}\int_{\rn}\frac{u^{p-1}(x,t)}{ (1+|x-x_o|)^{N+sp}}\,\dx\dt  \leqslant u(x_o,t_o) =0.
\end{align*}
Therefore, $u\equiv0$ a.e. in $\rn\times[t_o-(2\lm)^{sp}\mathsf{b},t_o- \lm^{sp}\mathsf{b}]$ for any $\lm\in(0,1)$, whence we send $\lm\to0$ and obtain that $u(\cdot, t_o)\equiv0$ on $E$ by continuity. This contradicts $u(x_n, t_o)>0$ and completes the proof.

\underline{\it Step 3.} 
Now, we combine what has been shown in the previous two steps to conclude that if $u(x_o,t_o)>0$, then $u>0$ in $E_{x_o}\times[t_o,T)$, where $E_{x_o}$ denotes the connected component of $E$ containing $x_o$. Equivalently, if $u(x_o,t_o)=0$, then $u\equiv0$ in $E_{x_o}\times(0,t_o]$. In addition, this conclusion implies that $u=0$ a.e. in $\rn\times(0,t_o]$. To see this, let us write down the weak formulation~\eqref{Eq:1:4p} in $E_{x_o}\times (t_1,t_o]$, $t_1>0$. Among the three terms on the left-hand side, the first two vanish due to $u\equiv0$ in $E_{x_o}\times(0,t_o]$, whence we obtain 
\begin{equation*}
\begin{aligned}
	\int_{t_1}^{t_o} \mathscr{E} (u(\cdot, t), \vp(\cdot, t) )\,\dt
	=0
\end{aligned}
\end{equation*}
for any
\begin{equation*}
\vp \in W^{1,2}_{\loc} (0,t_o;L^2(E_{x_o}) )\cap L^p_{\loc} (0,t_o;W_o^{s,p}(E_{x_o})),
\end{equation*}
where
\[
	 \mathscr{E}(t)=\int_{\rn}\int_{\rn}\mathsf{K}(x,y,t) \Phi_p (u(x,t) - u(y,t) ) (\vp(x,t) - \vp(y,t) )\,\dy\dx
\]
and $\rr\ni\xi\mapsto\Phi_p(\xi)=|\xi|^{p-2}\xi$.
Next, we use that $\vp(\cdot, t)$ is supported in $E_{x_o}$ and the symmetry of the kernel to split the integral over $\rn\times\rn$ into two terms. Namely,
\begin{align*}
    \int_{t_1}^{t_o} \mathscr{E} (u(\cdot, t), \vp(\cdot, t) )\, \dt=\int_{t_1}^{t_o} \mathscr{E}_1 (t)\, \dt+2\int_{t_1}^{t_o} \mathscr{E}_2(t)\, \dt,
\end{align*}
where
\[
\mathscr{E}_1(t):=\int_{E_{x_o}}\int_{E_{x_o}}\mathsf{K}(x,y,t) \Phi_p (u(x,t) - u(y,t) ) (\vp(x,t) - \vp(y,t) )\,\dy\dx
\]
and 
\[
\mathscr{E}_2(t):=\int_{E_{x_o}}\int_{\rn\setminus E_{x_o}}\mathsf{K}(x,y,t) \Phi_p (u(x,t) - u(y,t) ) (\vp(x,t) - \vp(y,t) )\,\dy\dx.
\]
The first term on the right-hand side vanishes thanks to $u\equiv0$ in $E_{x_o}\times(0,t_o]$, whereas the second term can be rewritten as
\begin{align*}
    2\int_{t_1}^{t_o}\int_{E_{x_o}}\int_{\rn\setminus E_{x_o}}\mathsf{K}(x,y,t) |0 - u(y,t) |^{p-2}  (0 - u(y,t) ) (\vp(x,t) - 0 )\,\dy\dx\dt
\end{align*}
as $\vp(\cdot, t)$ is supported in $E_{x_o}$. Hence, the weak formulation reduces to
\begin{align*}
    2\int_{t_1}^{t_o}\int_{E_{x_o}}\int_{\rn\setminus E_{x_o}}\mathsf{K}(x,y,t)  [u(y,t)] ^{p-1}     \vp(x,t)   \,\dy\dx\dt=0
\end{align*}
which gives by the lower bound of $\mathsf{K}$ in \eqref{Eq:K} that
\begin{align}\label{Eq:weak-form-reduced}
     \int_{t_1}^{t_o}\int_{E_{x_o}}\int_{\rn\setminus E_{x_o}}\frac{[u(y,t)] ^{p-1}     \vp(x,t)}{|x-y|^{N+sp}}     \,\dy\dx\dt=0
\end{align}
for any testing function $\vp$ satisfying \eqref{Eq:test-function} with $E_T$ replaced by $E_{x_o}\times(0,t_o]$. Let us suppose there is some $B_{2R}(x_o)$ inside $E_{x_o}$ and choose $\vp(x,t)=t\z(x)$ where $\z\ge0$ is smooth and supported in $B_R(x_o)$ but is not identical to zero. Then, observe that for $x\in B_R(x_o)$ and $y\in \rn\setminus E_{x_o}$ we have
\[
|x-y|\leqslant |x-x_o| + |x_o-y|\leqslant 2|x_o-y|.
\]
Hence, we get from \eqref{Eq:weak-form-reduced} that
\begin{align*}
    \int_{t_1}^{t_o}\int_{\rn\setminus E_{x_o}}\frac{[u(y,t)] ^{p-1}    t }{|x_o-y|^{N+sp}}     \,\dy\dt\cdot \int_{E_{x_o}}\z(x)\,\dx=0,
\end{align*}
whence we conclude the proof.

%%%%%%%%%%%%%%%%%%%%%%%%%%%%%%%%%%%%%%%%%%%%%%%%%%
%%%%%%%%%%%%%%%%%%%%%%%%%%%%%%%%%%%%%%%%%%%%%%%%%%
\section{Further applications}\label{S:appl}
As in \cite[\S~11]{DBGV-mono}, we point out that the scheme devised in our work is quite flexible and can be applied to other nonlinear integro-differential equations with measurable kernels as well. One example would be
\begin{equation}\label{Eq:pme}
\pl_t (|u|^{q-1}u) + \mathscr{L} u=0\quad\text{weakly in}\>\> E_T,
\end{equation}
where $q\in(0,1)$ and $\mathscr{L}$ is defined in \eqref{Eq:1:2}--\eqref{Eq:K} with $p=2$. Its notion of solutions can be formulated similar to Definition~\ref{Def:sol}. This kind of equations, in various setups, have been studied by a number of authors; see~\cite{Caff-10, Bonforte-17, Bonf-Fig-Vaz-18, Bonf-Vaz-14, vaz-12, maan-26}--just to mention a few. 

A continuity result paralleling that of \cite[Theorem~1.1]{Liao-mod-24} is important but still missing for \eqref{Eq:pme}, cf.~Remark~\ref{Rmk:continuity}. However, assuming qualitative continuity we have the following result.

\begin{theorem}[Harnack type estimates]%\label{Thm:pme}
   Let  $u$ be a continuous, local weak solution to \eqref{Eq:pme} with $q\in(0,1)$.
Assume that $u\geqslant0$ in $E_T$ and  $u_o:=u(x_o,t_o)>0$ for some $(x_o,t_o)\in E_T$. There exist constants $\boldsymbol{\gm}_{\mathsf{H}}>1$ and $\kappa,\,\sig\in(0,1)$ depending on $s$, $q$, $N$, $C_o$, and $C_1$, such that if  
\begin{equation*}%\label{Thm:condition:1}
    B_{100\rho}(x_o)\times (t_o-2\theta_{\sig}(8\rho)^{2s}, t_o+ 2\theta_{\sig} (8\rho)^{2s} ]\subset E_T,
\end{equation*} 
 where $\theta_{\sig}:=(\sig u_o)^{q-1}$ and if
\begin{equation*}%\label{Thm:condition:2}
    \boldsymbol{\gm}_{\mathsf{H}}\int_{t_o-2\theta_{\sig}(8\rho)^{2s}}^{t_o+2\theta_{\sig}(8\rho)^{2s}}\int_{\rn\setminus B_{\rho}(x_o)}\frac{u_-(x,t)}{|x-x_o|^{N+2s}}\,\dx\dt\leqslant\sig u_o,
\end{equation*}
then we have the following two conclusions.

\begin{itemize}
    \item[(i)] The local, intrinsic Harnack estimate holds true:
    \begin{equation*}%\label{Thm:conclusion:1}
        \sig\sup_{B_\rho(x_o)}u (\cdot, t_o-\kappa\theta_{1}\rho^{2s} ) \leqslant u_o \leqslant \sig^{-1}\inf_{B_\rho(x_o)}u (\cdot, t_o+\kappa\theta_{\sig}\rho^{2s} ).
    \end{equation*}
    \item[(ii)] The global,  intrinsic representation type estimate holds true:
    \begin{align*}%\label{Thm:conclusion:2}\nonumber
    \frac{\kappa\sig}{\boldsymbol{\gm}_{\mathsf{H}}}\,\int^{t_o-\kappa\theta_1\rho^{2s}}_{t_o-\kappa\theta_1(2\rho)^{2s}}\int_{\rn}&\frac{u_+(x,t)}{ (\rho+|x-x_o|)^{N+2s}}\,\dx\dt  \\
    &\leqslant u_o \leqslant  \boldsymbol{\gm}_{\mathsf{H}}\,\int^{t_o}_{t_o-2\theta_1\rho^{2s}}\int_{\rn}\frac{u_+(x,t)}{ (\rho+|x-x_o|)^{N+2s}}\,\dx\dt.
\end{align*}
\end{itemize}
\end{theorem}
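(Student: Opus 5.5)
The plan is to rerun the whole scheme of Sections~3--5 for the porous medium type equation \eqref{Eq:pme}, replacing the $(s,p)$ intrinsic scaling $k^{2-p}\rho^{sp}$ by $k^{q-1}\rho^{2s}$. The starting point is an energy estimate analogous to Proposition~\ref{Prop:2:1}, tested with $w_\pm=(u-k(t))_\pm$. Since the time derivative now acts on $u^q$ (for $u\geqslant0$), the parabolic term produces the quantity
\[
\mathfrak g_\pm(u,k)=\pm q\int_k^u s^{q-1}(s-k)_\pm\,\d s
\]
instead of $\tfrac12 w_\pm^2$. For $u$ ranging between levels comparable to $k$, this quantity is comparable to $k^{q-1}w_\pm^2$. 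The nonlocal part is unchanged with $p=2$, so the tail terms carry $u_\pm$ to the first power.

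With this energy estimate, I would verify that every module of Section~3 holds with $\theta=k^{q-1}$ in place of $k^{2-p}$:
\begin{itemize}
\item the critical density Lemma~\ref{Lm:DG:1};
\item Lemmas~\ref{Lm:DG:initial:1}, \ref{Lm:meas-propogation}, \ref{Lm:meas-shrink:1} and \ref{Lm:DG:2};
\item the tail lemmas~\ref{Lm:meas-shrink:2}--\ref{Lm:meas-shrink:3}, where $k^{p-1}$ and $u^{p-1}$ become $k$ and $u$.
\end{itemize}
The check in each case is that, after the change of variables $v(x,t)=u(x_o+\rho x,t_o+u_o^{q-1}\rho^{2s}t)/u_o$, the equation is invariant with unchanged kernel bounds. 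In all of these lemmas the energy is only used on level sets between $\tfrac14k$ and $k$, or on $\{u\leqslant k\}$ from below. For sub-solutions one has $\mathfrak g_+\gtrsim k^{q-1}w_+^2$ on bounded ranges, and since $q<1$ the lower bound $s^{q-1}\geqslant \mathsf{M}^{q-1}$ is harmless when $u\leqslant \mathsf M$.

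Next, Section~4 goes through verbatim. The expansion of positivity (Proposition~\ref{Prop:expansion}) uses only the time-slice measure propagation and shrinking lemmas. The key arithmetic in \eqref{dl-1-sig-1} is $\dl_1(\sig_1\varep)^{q-1}=\dl_o$, which is solvable with a power-like dependence. The analog of the choice $\sig_1\leqslant(\dl/4)^{1/(p-2)}$ becomes $\sig_1\leqslant(\dl/4)^{1/(1-q)}$, using $q-1<0$ exactly as $2-p<0$. Corollary~\ref{Cor:expansion} likewise only needs the geometric sum $\sum(2^{2s}\bar\eta^{q-1})^i$.

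Then Section~5 is repeated step by step:
\begin{itemize}
\item the forward estimate via the $\mathsf M_r=\mathsf N_r$ argument with Lemma~\ref{Lm:DG:1} and Lemma~\ref{Lm:meas-shrink:3};
\item the backward estimate by contradiction at a maximum point, using continuity;
\item representation estimate I from Lemmas~\ref{Lm:meas-shrink:2}--\ref{Lm:meas-shrink:3} applied with $k=\sig^{-1}$;
\item representation estimate II via a DeGiorgi $L^\infty$ bound with the interpolation.
\end{itemize}

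I expect the main obstacle to be the $L^\infty$ estimate behind the right-hand side of (ii). In the $p$-case, the condition $k\geqslant(\rho^{sp}/\theta)^{1/(p-2)}$ converted $\sup_t\int w^2$ into control of $\int w^p$. Here the energy controls $\sup_t\int \mathfrak g_+$, which behaves like $u^{q-1}w_+^2$ and degenerates where $u$ is large. I would first truncate at $\mathsf M=\sup_{Q(R,\theta)}u$, using $\mathfrak g_+\geqslant \tfrac{q}{2}\mathsf M^{q-1}w_+^2$, and then run DeGiorgi in $L^2$ with fast geometric convergence. The requirement becomes $k^{1-q}\gtrsim \mathsf M^{1-q}\rho^{2s}/\theta$; it is mixed with $\dl\mathsf M$ and absorbed by the same interpolation iteration as in \eqref{Eq:A:5}--\eqref{Eq:A:6}, provided the exponents of $\mathsf M$ stay strictly below one.

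If this interpolation fails, I would fall back on choosing $\theta$ intrinsically, with $\theta= 2\rho^{2s}$ after rescaling, so that $v(0,0)=1$ and the a priori lower bound from the forward Harnack give the $\mathsf M\sim1$ normalization. The normalization $\theta=2\theta_1$ in the statement suggests the author does something of this sort.
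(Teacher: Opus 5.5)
Your strategy (rescale by $v(x,t)=u(x_o+\rho x,t_o+u_o^{q-1}\rho^{2s}t)/u_o$ and rerun Sections~3--5 with $k^{2-p}\rho^{sp}$ replaced by $k^{q-1}\rho^{2s}$) is the route the paper intends. The paper gives no separate proof, only the remark that the scheme transfers. Several of your adaptations are right: the form of $\mathfrak g_\pm$, its comparability with $k^{q-1}w_\pm^2$, and the arithmetic of Section~4 with $q-1<0$ in the role of $2-p<0$.

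The genuine gap is that you carry over the homogeneity of the tail unchanged. With a time-dependent level, the energy estimate contains $k'(t)\,\partial_k\mathfrak g_\pm(u,k)$ instead of $k'(t)\,w_\pm$. Here $\partial_k\mathfrak g_-(u,k)=(k^q-u^q)_+$, which for $0\le u\le k$ lies between $qk^{q-1}(k-u)$ and $k^{q-1}(k-u)$. Consequently:
\begin{itemize}
\item the shift $\ell(t)$ absorbing the tail must carry a factor $k^{1-q}$;
\item every tail threshold becomes the $q$-th power of the level, e.g.\ $\mathrm{Tail}[u_-;\mathcal Q]>ck^q$ instead of $>ck$;
\item Lemma~\ref{Lm:meas-shrink:3} yields $\boldsymbol\gamma\max\{1,c\}k^q/\mathrm{Tail}[u_+]$;
\item in the $L^\infty$ step $\ell\approx\mathsf M^{1-q}\mathrm{Tail}[u_+]$, which Young's inequality turns into $\epsilon\mathsf M+C_\epsilon\mathrm{Tail}[u_+]^{1/q}$.
\end{itemize}
For the alternatives involving $v_-$ this only changes constants, since all levels in the normalized proof are bounded below by data constants. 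It is not harmless in the case split of the forward step. There the level $\xi\boldsymbol\omega=\xi 2^\beta(1-r_*)^{-\beta}$ is unbounded above, and the sub-solution case of Lemma~\ref{Lm:DG:1} needs $\widetilde{\boldsymbol\gamma}\,\mathrm{Tail}[v_+]\lesssim(\xi\boldsymbol\omega)^q$, which $\widetilde{\boldsymbol\gamma}\,\mathrm{Tail}[v_+]\le\xi\boldsymbol\omega$ does not give. If you split at $(\xi\boldsymbol\omega)^q$ instead, Lemma~\ref{Lm:meas-shrink:3} at $k=\sigma\xi\boldsymbol\omega$ gives the bound $\boldsymbol\gamma\widetilde{\boldsymbol\gamma}\sigma^q$ in the second case, and the normalized argument closes.

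The decisive consequence comes when you revert to $u$: the factor is $u_o^{-q}$, not $u_o^{-1}$, because $\mathrm{Tail}[v_\pm]=u_o^{-q}\,\mathrm{Tail}[u_\pm]$, and likewise for the integrals in (ii). So your scheme proves (i) under $\boldsymbol\gamma_{\mathsf H}\,\mathrm{Tail}[u_-]\le\sigma u_o^q$, and (ii) with $u_o^q$ in the middle. It does not prove the printed version with $u_o$, and that version cannot be proved because it is false. The constant $u\equiv c>0$ is an admissible solution with $u_-\equiv0$, and
$\int_{t_o-2\theta_1\rho^{2s}}^{t_o}\int_{\rn}c\,(\rho+|x-x_o|)^{-N-2s}\,\dx\dt=2C_{N,s}c^q$, where $C_{N,s}=\int_{\rn}(1+|z|)^{-N-2s}\,\mathrm{d}z$.
\begin{itemize}
\item The right inequality in (ii) then reads $c^{1-q}\le 2C_{N,s}\boldsymbol\gamma_{\mathsf H}$, which fails for large $c$.
\item The left inequality reads $c^{q-1}\le\boldsymbol\gamma_{\mathsf H}/(\kappa^2\sigma(4^s-1)C_{N,s})$, which fails for small $c$.
\end{itemize}
More generally, $u\mapsto\lambda u(x,\lambda^{1-q}t)$ preserves the equation, the cylinder condition and (i). It multiplies $u_o$ by $\lambda$ but multiplies the integrals in (ii) and in the tail hypothesis by $\lambda^q$.

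Finally, the fallback in your last paragraph does not work: the forward estimate gives only lower bounds, so it cannot normalize $\mathsf M=\sup v$. It is also not needed. Bound the $\partial_t\zeta$ term using $\mathfrak g_+(u,K)\le\frac q2K^{q-1}(u-K)_+^2$ with $K\ge k/2$; then the time requirement $k\ge(\rho^{2s}/\theta)^{1/(1-q)}$ is free of $\mathsf M$. The weight $\mathsf M^{q-1}$ in the $\sup_t$ term then only produces the smallness condition $k^2\gtrsim\mathsf M^{1-q}\theta\rho^{-2s}|Q|^{-1}\iint_Q u_+^2$. After using $u_+^2\le\mathsf M u_+$ and taking square roots, this is sublinear in $\mathsf M$. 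Young's inequality and the paper's interpolation then close, ending in $u_o^q\lesssim\int\!\!\int u_+(\rho+|x-x_o|)^{-N-2s}\,\dx\dt$.
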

Again, one should keep in mind that the results are entirely local, and no initial or boundary data have been prescribed.
By  a similar proof of Theorem~\ref{Thm:2}, we can also prove a strong maximum principle for \eqref{Eq:pme}.
Further generalizations are possible for a class of ``doubly nonlinear'' equations that encompass those considered here. Alternatively, one could consider general kernels, such as those in \cite{ChenJY, Kass-20}, and investigate the relationship among Harnack type estimates, Poincar\'e type inequalities and the doubling volume property in metric measure spaces. 

%%%%%%%%%%%%%%%%%%
%%%%%%%%%%%%%%%%%%
\

\noi{\bf Data Availability.} Data sharing not applicable to this article as no datasets were generated or analyzed during the current study.

\

\noi{\bf Conflict of Interest.} The author declares no conflict of interest.

\

\noi{\bf Use of AI.} No AI tools are used.
%%%%%%%%%%%%%%%%%%%%%%%%%%%%%%%%%%%%%%%
%%%%%%%%%%%%%%%%%%%%%%%%%%%%%%%%%%%%%%%
%%%%%%%%%%%%%%%%%%%%%%%%%%%%%%%%%%%%%%%
%%%%%%%%%%%%%%%%%%%%%%%%%%%%%%%%%%%%

\end{document}